\documentclass[11pt,reqno]{amsart}
\setlength{\textheight}{24cm}
\setlength{\textwidth}{16cm}
\setlength{\topmargin}{-0.9cm}
\setlength{\parskip}{0.3\baselineskip}
\usepackage{amssymb}
\usepackage{amscd,enumerate,amsfonts,calc,amsmath,verbatim,xypic}
\usepackage[alphabetic]{amsrefs}
\usepackage{amsmath, amssymb, amsfonts, amstext, amsthm, amscd, xypic}
\usepackage[mathscr]{eucal}
\usepackage{enumerate}
\usepackage{tikz,color,soul}
\usetikzlibrary{matrix,arrows}
\usepackage[utf8]{inputenc}
\usepackage[english]{babel}

\usepackage{mathrsfs}

\usepackage{verbatim}
\usepackage{blindtext}
\usepackage{geometry}
\geometry{
	a4paper,
	total={170mm,257mm},
	left=21 mm,
	right=21 mm,
	top=13mm,
}
\usepackage[new]{old-arrows}

\usepackage{amsmath,calligra,mathrsfs}
\DeclareMathOperator{\Hom}{\mathscr{H}\text{\kern -3pt {\calligra\large om}}\,} 


\usepackage{longtable}
\usepackage{hyperref}
\usepackage[all]{xy}
\usepackage{ulem}
\usepackage{xcolor}

\usetikzlibrary{arrows.meta, positioning,arrows}

\newtheorem{thm}{Theorem}[section]
\newtheorem{prop}[thm]{Proposition}
\newtheorem{lemma}[thm]{Lemma}
\newtheorem{cor}[thm]{Corollary}
\newtheorem{defn}[thm]{Definition}

\newtheorem{rmk}[thm]{Remark}

\numberwithin{equation}{section}

\def\rar{{\rightarrow}}

\def \mc{\mathcal}

\allowdisplaybreaks
\begin{document}
	
	\title{$G$-connections on principal bundles over complete $G$-varieties}

	\author{Bivas Khan}
	\address{Department of Mathematics, Chennai Mathematical Institute, Chennai, India}
	\email{bivaskhan10@gmail.com}

	\author{Mainak Poddar}
	\address{Department of Mathematics, Indian Institute of Science Education and Research-Pune, Pune, India}
	\email{mainak@iiserpune.ac.in}

	\subjclass[2010]{14J60, 14M25, 14L30, 53C05.}

	\keywords{Principal bundle, group action, complete variety, G-connection, toric variety. }

	\begin{abstract}
		Let \(X\) be a complete variety over an algebraically closed field $k$ of characteristic zero, equipped with an action of an algebraic group \(G\). Let \(H\) be a reductive group. We study the notion of \(G\)-connection on a principal \(H\)-bundle. We give necessary and sufficient criteria for the existence of \(G\)-connections extending the Atiyah-Weil type criterion for holomorphic connections obtained by Azad and Biswas.	
		 We also establish a relationship between the existence of \(G\)-connection and equivariant structure on a principal \(H\)-bundle, under the assumption that \(G\) is semisimple and simply connected. These results have been obtained by Biswas et al. 
		 when the underlying variety is smooth.
		
	\end{abstract}
	
	\maketitle
	
	
	\section{Introduction}
	Atiyah initiated the study of holomorphic connection for holomorphic principal bundles (\cite{At57}). However, the existence
	of a connection in the holomorphic category is not guaranteed. A well-known theorem due to A. Weil and M. F. Atiyah says that a holomorphic vector bundle $E$ over a compact connected Riemann surface $X$ admits a flat connection if and only if $\text{degree}(W) = 0$ for every direct summand $W$ of $E$. This criterion has been extended for principal bundles over $X$ having reductive structure group in \cite{AB02, AB03}.
	
	When a manifold \(X\) admits an action of a Lie group \(G\), the notion of \(G\)-connection on principal bundles over \(X\) was introduced in \cite{BSN15}. The authors in \cite{BSN15, BPeqcn} investigated various necessary and sufficient conditions under which a principal bundle over $X$ admits a $G$-connection. If \(X\) is a toric variety under the action of a torus \(T\), then a \(T\)-connection is nothing but the logarithmic connection singular along the boundary divisor \(D:=X \setminus T\) (see \cite[Section 4.2]{BSN15}). In \cite{DKP}, the authors have studied logarithmic connections on principal bundles over $X$, which is singular along $D$, where \(X\) is a normal projective variety (possibly with singularities) and \(D\) is a reduced Weil divisor on $X$. The authors have shown that  the existence of a logarithmic connection on a principal bundle over a projective toric variety, singular along the boundary divisor, is equivalent to the	existence of a torus equivariant structure on the bundle. The aim of this note is to consider \(G\)-connection, when \(X\) is a complete variety equipped with an action of more general algebraic group \(G\) than that of an algebraic torus.  Then our goal is to establish an analogue of Atiyah-Weil type criterion for existence of \(G\)-connection.

	Let \(X\) be a complete variety over an algebraically closed field \(k\) of characteristic zero. Let \(G\) be an algebraic group acting on \(X\). Let \(H\) be a reductive linear algebraic group and \(p: \mathcal{P} \rightarrow X\) be a principal \(H\)-bundle. We present an algebro-geometric construction of the \(G\)-Atiyah sequence introduced in \cite{BSN15} (see Section \ref{Atiyah sequence for group action}). This will enable us to define a \(G\)-connection as an $\mathcal{O}$-linear splitting of the \(G\)-Atiyah sequence. We then study relationship between \(G\)-connection on a vector bundle and its associated frame bundle (see Section \ref{Gconnection on vector bundle}). We provide several necessary and sufficient conditions for existence of \(G\)-connection in the spirit of results obtained in \cite{AB02} and \cite{AB03}, in terms of the connections on associated bundles.  We show that to check the existence of \(G\)-connection it suffices to consider Levi reduction (see Corollary \ref{GconnLeviRed}). When \(H\) is semisimple, we show that a principal \(H\)-bundle admits a (flat) $G$-connection if and only if the associated adjoint bundle admits one (see Corollary \ref{exiat_ad}). More generally, for a reductive group \(H\), the necessary and sufficient condition on a $H$-bundle to admit a flat
	connection is described  (see Proposition \ref{Thm3.1AB3}):
	\begin{thm}
			Let \(H\) be a reductive linear algebraic group. Then a principal \(H\)-bundle $\mathcal{P}$ admits a \(G\)-connection if and only if the following conditions hold:
		\begin{enumerate}
			\item the adjoint bundle $\text{ad}(\mathcal{P})$ admits a \(G\)-connection,
			\item for every character $\chi$ of \(H\), the associated line bundle $\mathcal{P} \times^{\chi} k$ associated to $\mathcal{P}$ admits a \(G\)-connection. 
		\end{enumerate}
	\end{thm}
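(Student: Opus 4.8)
The plan is to reduce the statement to a decomposition of the $G$-Atiyah class of $\mathcal{P}$ along the canonical splitting $\mathfrak{h}=\mathfrak{z}(\mathfrak{h})\oplus[\mathfrak{h},\mathfrak{h}]$ of the Lie algebra of $H$ into its center and its semisimple part. Recall from Section~\ref{Atiyah sequence for group action} that a $G$-connection on $\mathcal{P}$ is an $\mathcal{O}_X$-linear splitting of the $G$-Atiyah sequence
\[
0\ \rar\ \mathrm{ad}(\mathcal{P})\ \rar\ \mathrm{At}_G(\mathcal{P})\ \rar\ \mathcal{O}_X\otimes_k\mathfrak{g}\ \rar\ 0,
\]
so that such a splitting exists if and only if the corresponding extension class, which I will call the $G$-Atiyah class $\mathrm{at}_G(\mathcal{P})\in\mathrm{Ext}^1_{\mathcal{O}_X}(\mathcal{O}_X\otimes_k\mathfrak{g},\mathrm{ad}(\mathcal{P}))\cong H^1(X,\mathrm{ad}(\mathcal{P}))\otimes_k\mathfrak{g}^\ast$, vanishes; the same holds for a vector bundle with $\mathrm{ad}(\mathcal{P})$ replaced by the endomorphism bundle. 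The one input I would take from Section~\ref{Gconnection on vector bundle} is functoriality: for a representation $\rho\colon H\to \mathrm{GL}(V)$ with associated bundle $E_\rho=\mathcal{P}\times^{\rho}V$, the differential $d\rho$ induces a morphism of $G$-Atiyah sequences, hence $\mathrm{at}_G(E_\rho)=(d\rho)_\ast\,\mathrm{at}_G(\mathcal{P})$. Granting this, the forward implication is immediate: if $\mathcal{P}$ admits a $G$-connection then $\mathrm{at}_G(\mathcal{P})=0$, so $\mathrm{at}_G(\mathrm{ad}(\mathcal{P}))=0$ and $\mathrm{at}_G(\mathcal{P}\times^{\chi}k)=0$ for every character $\chi$, giving (1) and (2).

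For the converse I would first record the decomposition. Writing $\mathfrak{z}=\mathfrak{z}(\mathfrak{h})$ and $\mathfrak{h}'=[\mathfrak{h},\mathfrak{h}]$, both are characteristic ideals, hence $\mathrm{Ad}(H)$-stable, and $\mathrm{Ad}(H)$ acts trivially on $\mathfrak{z}$; thus $\mathrm{ad}(\mathcal{P})=(\mathcal{O}_X\otimes_k\mathfrak{z})\oplus\mathrm{ad}(\mathcal{P})'$ with $\mathrm{ad}(\mathcal{P})'=\mathcal{P}\times^{\mathrm{Ad}}\mathfrak{h}'$, and correspondingly $\mathrm{at}_G(\mathcal{P})=\alpha\oplus\beta$ with $\alpha\in H^1(X,\mathcal{O}_X)\otimes_k\mathfrak{z}\otimes_k\mathfrak{g}^\ast$ and $\beta\in H^1(X,\mathrm{ad}(\mathcal{P})')\otimes_k\mathfrak{g}^\ast$. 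The goal is then to show that (2) forces $\alpha=0$ and (1) forces $\beta=0$, whence $\mathrm{at}_G(\mathcal{P})=0$ and $\mathcal{P}$ carries a $G$-connection.

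To see that (2) forces $\alpha=0$: the differential of a character $\chi\in X^\ast(H)$ annihilates $[\mathfrak{h},\mathfrak{h}]$, so $\mathrm{at}_G(\mathcal{P}\times^{\chi}k)=(d\chi)_\ast\,\mathrm{at}_G(\mathcal{P})$ is the contraction of $\alpha$ with $d\chi|_{\mathfrak{z}}\in\mathfrak{z}^\ast$; since $H$ is reductive, $H/[H,H]$ is a torus with Lie algebra canonically $\mathfrak{z}$, so $\chi\mapsto d\chi|_{\mathfrak{z}}$ identifies $X^\ast(H)\otimes_k k$ with $\mathfrak{z}^\ast$, and as each $\mathcal{P}\times^\chi k$ admits a $G$-connection by (2) all these contractions vanish, forcing $\alpha=0$. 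To see that (1) forces $\beta=0$: the map $\mathrm{ad}\colon\mathfrak{h}\to\mathfrak{gl}(\mathfrak{h})$, $Y\mapsto[Y,\,\cdot\,]$, has kernel $\mathfrak{z}$ and restricts to an injection of $H$-modules $\mathfrak{h}'\hookrightarrow\mathfrak{gl}(\mathfrak{h})$ (as $\mathfrak{h}'$ is semisimple with trivial center), which is split because $H$ is reductive; hence the induced bundle map $\mathrm{ad}(\mathcal{P})'\to\mathrm{End}(\mathrm{ad}(\mathcal{P}))$ is a split monomorphism and is therefore injective on $H^1$. Since $\mathrm{at}_G(\mathrm{ad}(\mathcal{P}))=\mathrm{ad}_\ast\,\mathrm{at}_G(\mathcal{P})=\mathrm{ad}_\ast\beta$ (the $\mathfrak{z}$-summand being killed) vanishes by (1), I conclude $\beta=0$.

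I expect the two representation-theoretic translations in the previous paragraph to be the crux of the argument: the essential inputs are that the differentials of the characters of $H$ span $\mathfrak{z}(\mathfrak{h})^\ast$ and that $\mathrm{ad}|_{[\mathfrak{h},\mathfrak{h}]}$ is a split monomorphism of $H$-modules — both of which genuinely use that $H$ is reductive and that the ground field has characteristic zero — after which the proof is bookkeeping with the direct-sum decomposition of $\mathrm{at}_G(\mathcal{P})$ together with the functoriality recalled above. As a variant, condition~(1) could instead be rephrased through Corollary~\ref{exiat_ad} applied to the adjoint-type bundle $\mathcal{P}\times^{H}\!\big(H/Z(H)\big)$, using that $\mathrm{ad}(\mathcal{P})$ differs from the adjoint bundle of this semisimple bundle only by the trivial summand $\mathcal{O}_X\otimes_k\mathfrak{z}$ and that trivial bundles always admit $G$-connections; but the Atiyah-class computation above keeps the argument self-contained.
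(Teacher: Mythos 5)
Your proof is correct, but it takes a genuinely different route from the paper. The paper never touches the Atiyah class directly: it passes to the two quotient groups $H'=H/Z^0(H)$ and $A=H/[H,H]$, shows via an \'etale covering argument that $\mathcal{P}\to\mathcal{P}_1\times_X\mathcal{P}_2$ identifies the ($G$-)Atiyah sequences, so that $\mathcal{P}$ has a $G$-connection iff both $\mathcal{P}_1$ and $\mathcal{P}_2$ do (Proposition \ref{exist_proj}); it then handles the semisimple factor by Corollary \ref{exiat_ad} (using $\mathrm{ad}(\mathcal{P})\cong\mathrm{ad}(\mathcal{P}_1)\oplus\mathrm{ad}(\mathcal{P}_2)$ with $\mathrm{ad}(\mathcal{P}_2)$ trivial) and the abelian factor by decomposing the $A$-bundle into the line bundles $\mathcal{P}\times^{\chi}k$. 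You instead decompose the single class $a_\rho(\mathcal{P})\in H^1(X,\mathrm{ad}(\mathcal{P}))\otimes\mathfrak g^\vee$ along $\mathfrak h=\mathfrak z(\mathfrak h)\oplus[\mathfrak h,\mathfrak h]$ and kill each summand separately: the characters because their differentials span $\mathfrak z(\mathfrak h)^\vee$, and the $[\mathfrak h,\mathfrak h]$-part because $\mathrm{ad}|_{[\mathfrak h,\mathfrak h]}\hookrightarrow\mathfrak{gl}(\mathfrak h)$ is a split monomorphism of $H$-modules (complete reducibility in characteristic zero), hence injective on $H^1$. The functoriality $a_\rho(\mathcal{P}_1)=\mathrm{ad}(\tau)_*\,a_\rho(\mathcal{P})$ that you invoke is exactly what diagram \eqref{homotoindeq1.1} provides, and your split-injection argument is the cohomological shadow of the retraction $\bar\gamma$ used in Proposition \ref{H1_2H}; so the ingredients correspond, but your version avoids the fiber product $\mathcal{P}_1\times_X\mathcal{P}_2$ and the \'etaleness of $\Delta$ altogether, at the price of working with extension classes rather than explicit splittings (which is why it does not immediately yield the flat/integrable refinements that the paper's bundle-level maps carry along). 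One small point to make explicit in either approach: the argument that $X^*(H)$ spans $\mathfrak z(\mathfrak h)^\vee$ (respectively, that $A\cong(k^*)^n$ in the paper) uses connectedness of $H$, which is implicit in the statement.
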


	 In Section \ref{infi_def}, we characterize the existence of a \(G\)-connection using the infinitesimal deformation of the principal bundle. Section \ref{Equivariant structure and Gconnection} is devoted to investigating relationship between \(G\)-connection and $G$-equivariant structure on a principal bundle. We show the following (see Corollary \ref{exist_conn_1}):
	\begin{thm}
	Let \(p : \mathcal{P} \rightarrow X\) be a principal \(H\)-bundle. Assume that \(G\) is semisimple and simply connected. Then the following conditions are equivalent.
	\begin{enumerate}
		\item The principal bundle admits an equivariant structure.
		\item The principal bundle admits an integrable \(G\)-connection.
		\item The principal bundle admits a \(G\)-connection.
	\end{enumerate} 
		\end{thm}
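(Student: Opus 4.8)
The plan is to establish the cycle of implications $(1)\Rightarrow(2)\Rightarrow(3)\Rightarrow(1)$. The implication $(2)\Rightarrow(3)$ is immediate, since an integrable $G$-connection is a $G$-connection. For $(1)\Rightarrow(2)$, suppose $\tilde\alpha\colon G\times\mathcal P\to\mathcal P$ is an equivariant structure, i.e.\ an action lifting the $G$-action on $X$ and commuting with the right $H$-action. Differentiating $\tilde\alpha$ at the identity of $G$ yields a Lie algebra homomorphism $\mathfrak g\to H^0(\mathcal P,T\mathcal P)$ whose image consists of $H$-invariant vector fields lying over the fundamental vector fields of the $G$-action on $X$. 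By the description of the $G$-Atiyah sequence in Section~\ref{Atiyah sequence for group action}, this is exactly an $\mathcal O_X$-linear splitting of the $G$-Atiyah sequence that respects Lie brackets, that is, an integrable $G$-connection.

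The substance of the theorem is the implication $(3)\Rightarrow(1)$, which I would prove in two steps: first upgrade an arbitrary $G$-connection to an integrable one, and then integrate the latter to a genuine $G$-action. For the first step, let $D$ be a $G$-connection and set $A:=H^0(X,\text{ad}(\mathcal P))$, a finite dimensional vector space because $X$ is complete. Evaluating the curvature of $D$ on constant sections $\xi,\eta\in\mathfrak g$ gives $K_D(\xi,\eta):=[D\xi,D\eta]-D[\xi,\eta]\in A$, and since the splitting maps isomorphically onto a complement of $\text{ad}(\mathcal P)$ one checks that $D$ is integrable if and only if the $2$-cochain $K_D\in C^2(\mathfrak g,A)$ vanishes. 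The set of $G$-connections is an affine space over $\operatorname{Hom}(\mathfrak g,A)=C^1(\mathfrak g,A)$, and replacing $D$ by $D+\phi$ changes $K_D$ by a Chevalley--Eilenberg differential of $\phi$ plus the quadratic term $[\phi,\phi]$; solving $K_{D+\phi}=0$ is therefore a Maurer--Cartan type problem over $A$. This is where semisimplicity of $G$ is used decisively: by Whitehead's first and second lemmas $H^1(\mathfrak g,A)=H^2(\mathfrak g,A)=0$ for the finite dimensional module $A$, which controls both the obstruction to flattening $D$ and the ambiguity in the solution. It is likely convenient to reduce the problem first via Corollaries~\ref{GconnLeviRed} and~\ref{exiat_ad}, replacing $\mathcal P$ by a Levi reduction and, for semisimple structure group, by the vector bundle $\text{ad}(\mathcal P)$.

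For the second step, an integrable $G$-connection is a Lie algebra homomorphism $\mathfrak g\to H^0(\mathcal P,T\mathcal P)$ landing in $H$-invariant vector fields over the fundamental vector fields on $X$, and the task is to integrate it to an algebraic action of $G$ on $\mathcal P$. Fixing a faithful representation $H\hookrightarrow GL(W)$, the connection induces one on the associated vector bundle $E=\mathcal P\times^{H}W$, hence a $\mathfrak g$-action by vector fields on the complete variety $\mathbb P(E\oplus\mathcal O_X)$; completeness forces these vector fields to have algebraic flows that patch together, and since $G$ is simply connected the infinitesimal action integrates to a $G$-action. One then verifies that this action preserves the locus of $\mathbb P(E\oplus\mathcal O_X)$ corresponding to $\mathcal P$ inside the frame bundle of $E$, yielding an equivariant structure whose derivative recovers the original $G$-connection.

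The principal obstacle, and the precise place where both hypotheses on $G$ and the completeness of $X$ are genuinely needed, is the passage from an infinitesimal action to a global algebraic $G$-action in the second step: completeness of $X$ does the essential work of making the relevant vector fields algebraically integrable, while simple connectedness converts the resulting $\mathfrak g$-action into a $G$-action, and additional care is required because $X$ may be singular. The flattening step is more routine, the one subtlety being that the natural $\mathfrak g$-module structure on $A$ is available only once $D$ is already integrable, so the argument must be organised as a deformation/obstruction argument rather than a single cohomology computation.
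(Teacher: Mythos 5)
Your overall architecture $(1)\Rightarrow(2)\Rightarrow(3)\Rightarrow(1)$ matches the paper's, and the first two implications are handled the same way (differentiating the lifted action, as in Proposition \ref{eqtoconn}; the paper does have to check that the resulting splitting is $\mathcal{O}_X$-linear and actually lands in $\mathcal{A}t_{\rho}(\mathcal{P})$, i.e.\ that $\eta\circ\zeta'=\zeta$, which you assert rather than verify, but that is minor). Your flattening step is also essentially sound: the paper gets it in one line by taking global sections of the $G$-Atiyah sequence, observing that $\mathrm{pr}_2(X):H^0(X,\mathcal{A}t_{\rho}(\mathcal{P}))\rightarrow\mathfrak{g}$ is a surjection of finite-dimensional Lie algebras onto a semisimple one ($X$ complete), and invoking the Levi--Malcev splitting from Bourbaki. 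Your Maurer--Cartan/Whitehead argument is in effect a by-hand proof of that splitting; the circularity you flag (the $\mathfrak{g}$-module structure on $A=H^0(X,\mathrm{ad}(\mathcal{P}))$ exists only after splitting, and $A$ need not be abelian) is real and is exactly what the filtration argument in the proof of Levi's theorem resolves, so citing that theorem is the cleaner route. Note also that only semisimplicity of $G$ is used here, not simple connectedness.

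The genuine gap is in the integration step $(2)\Rightarrow(1)$. The assertion that ``completeness forces these vector fields to have algebraic flows that patch together'' is not a theorem in the algebraic category: over a general algebraically closed field of characteristic zero one cannot take flows, and integrating a single vector field is in any case not the same problem as integrating a Lie algebra homomorphism $\mathfrak{g}\rightarrow H^0(Y,\mathscr{T}_Y)$ to a $G$-action. The correct mechanism, which your sketch is missing, is (a) the representability of the automorphism functor $\underline{Aut}^H(\mathcal{P})$ by a group scheme locally of finite type with $\mathrm{Lie}(\mathrm{Aut}^H(\mathcal{P}))\cong H^0(\mathcal{P},\mathscr{T}_{\mathcal{P}})^H$ (Brion's theorem, which is where completeness of $X$ actually enters), and (b) the Demazure--Gabriel theorem that for $G$ semisimple and simply connected every Lie algebra homomorphism $\mathfrak{g}\rightarrow\mathrm{Lie}(G')$ integrates uniquely to a homomorphism $G\rightarrow G'$ of group schemes. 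With these, the integrable connection gives $\bar\sigma:G\rightarrow\mathrm{Aut}^H(\mathcal{P})$ directly, and compatibility with $\theta:\mathrm{Aut}^H(\mathcal{P})\rightarrow\mathrm{Aut}(X)$ follows from uniqueness of the integration. Your detour through $\mathbb{P}(E\oplus\mathcal{O}_X)$ does not avoid needing (a) and (b) (you still must integrate a $\mathfrak{g}$-action on a proper scheme), and it creates new unaddressed verifications: that the resulting action is linear on the fibres of $E$, that it commutes with the structure group, and that it preserves the reduction from the frame bundle of $E$ to $H$. As written, ``one then verifies'' conceals the substance of the implication.
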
 
	Finally, as an application we show that a principal \(H\)-bundle on \(X\) admits a \(G\)-equivariant structure if and only if  for all \(g \in G\), we have
	\begin{equation*}
		\Phi_g : \mathcal{P} \stackrel{\cong} \longrightarrow \rho_g^* \, \mathcal{P}
	\end{equation*}
	as principal \(H\)-bundles over \(X\), where \(G\) is assumed to be semisimple and simply connected (see Proposition \ref{klytype}). This generalizes a well-known criterion for existence of torus equivariant structure on bundles (cf. \cite[Proposition 1.2.1]{Kly}, \cite[Corollary 4.4]{BPeqcn}). In the final section we illustrate some natural \(G\)-connections with examples.

		\subsection*{Acknowledgments} The first author thanks Vikraman Balaji for helpful conversations. The first author is partially supported by a grant from Infosys Foundation. The research of the second author was supported in part by a SERB MATRICS Grant: MTR/2019/001613.

	\section{Preliminaries}
	
 Throughout this article, we consider algebraic varieties, 	schemes, and morphisms over an algebraically closed field $k$ of characteristic zero. Unless explicitly mentioned, we will assume that the considered schemes are of finite type over $k$. By a point of a scheme $X$, we will mean a closed point unless explicitly mentioned. A variety is an integral separated scheme. By algebraic group we mean smooth group scheme over $k$. Since, $k$ is of zero characteristic, any algebraic group is reduced and hence, a group variety. By linear algebraic group we mean affine algebraic group. 
 
 Let \(X\) be a variety and \(H\) be  a linear algebraic group. A principal \(H\)-bundle is a faithfully flat \(H\)-invariant morphism \(p : \mathcal{P} \rightarrow X\) from a right \(H\)-variety $\mathcal{P}$ to \(X\) such that the morphism 
 \begin{equation}\label{defpb}
 	\mathcal{P} \times H \rightarrow \mathcal{P} \times_X \mathcal{P}, \, (z, h) \mapsto (z \cdot h, z)
 \end{equation} 
	is an isomorphism.
	\begin{rmk}\cite[Remark 3.1]{Brauto}
		The principal bundle \(p : \mathcal{P} \rightarrow X\) is locally isotrivial, i.e. for any point \(x \in X\) there exists an open subscheme \(U \subset X\) containing \(x\) and a finite \'{e}tale surjective morphism \(f : U' \rightarrow U\) such that the pullback bundle \(\mathcal{P} \times_U U'\) is trivial.
	\end{rmk}
	
We now recall the automorphism functor from \cite[Section 3]{MO}. Given a scheme \(S\), we denote by \(\text{Aut}_S(X \times S)\) the group of automorphisms of \(X \times S\) viewed as  a scheme over \(S\). The automorphism functor is given by the following contravariant group functor from the category $\text{Sch}/k$ of schemes over $k$ to the category Gr of groups.
\begin{equation*}
	\underline{Aut}(X) : \text{Sch}/k \longrightarrow \text{Gr}, \, S \longmapsto \text{Aut}_S(X \times S).
\end{equation*}
If this functor is representable we say that the automorphism group scheme of \(X\) exists. 	Assume that \(X\) is proper scheme over \(k\), then the functor \(\underline{Aut}(X)\) is representable by \(\text{Aut}(X)\), a group scheme locally of finite type over \(k\) (see  \cite[Theorem 3.7]{MO}). In particular, the neutral component \(\text{Aut}^0(X)\) is an algebraic group. Also, recall that we have the following isomorphism of Lie algebras (see \cite[Lemma 3.4 and Introduction]{MO}, \cite[Theorem 3.1]{LI10}, \cite[Section 2]{Br18}). Note that here we need characteristic \(k\) to be zero.
\begin{equation}\label{P1}
\begin{split}
		\alpha_X : \text{Lie}(\text{Aut}(X)) & \stackrel{\cong}\longrightarrow H^0(X, \mathscr{T}_X), \, \text{given by}\\
		\phi & \longmapsto \delta_{\phi},
\end{split}
\end{equation}
	where \(\delta_{\phi}\) is determined by the following formula
	\begin{equation*}
		\phi(f)= f + \varepsilon \, \delta_{\phi}(f) \in \Gamma (U, \mc{O}_X + \varepsilon \, \mc{O}_X)= \Gamma(U, I_{k} \otimes \mc{O}_X)
	\end{equation*}
	for \(f \in \Gamma(U, \mc{O}_X)\), \(U\) open subset of \(X\), \(I_k=\frac{k[\varepsilon]}{\langle \varepsilon^2 \rangle}\) is the ring of dual numbers and $\phi \in \text{Lie}(\text{Aut}(X))$. Here, we have used the following identification (see \cite[Lemma 3.3]{MO}):
	\begin{equation*}
		\begin{split}
			\text{Lie}(\text{Aut}(X))=\{\phi:& \mc{O}_X \, \rar \, \text{I}_{k} \otimes \mc{O}_X		 \text{ is a homomorphism of } k \text{-algebras on } X ~|~ (r \otimes 1) \circ \phi= \text{id}\},
		\end{split}
	\end{equation*}
	where the map \(r : I_k \rightarrow k\) is given by sending \(\varepsilon\) to zero.
	
	\noindent
	Let $p : \mathcal{P} \rightarrow X$ be a principal \(H\)-bundle on a proper scheme \(X\). Consider the abstract group \(\text{Aut}^H(\mathcal{P})\) associated to $\mathcal{P}$ consisting of automorphisms of $\mathcal{P}$ preserving the action of \(H\). Since $ p : \mathcal{P} \rightarrow X$ is a geometric quotient, for any $\psi \in \text{Aut}^H(\mathcal{P})$ there is an automorphism of $X$, say $\theta(\psi)$ satisfying
	\begin{equation}\label{gammaeq}
		p \circ \psi =\theta(\psi) \circ p.
	\end{equation}
Thus we get a group homomorphism 
\begin{equation}\label{gamma}
	\theta : \text{Aut}^H(\mathcal{P}) \rightarrow \text{Aut}(X),
\end{equation}
which yields a map between group functors
\begin{equation*}
	\theta : \underline{Aut}^H(\mathcal{P}) \rightarrow \underline{Aut}(X).
\end{equation*}	
	The functor $\underline{Aut}^H(\mathcal{P})$ is represented by a group scheme, locally of finite type by \cite[Theorem 4.2]{Brauto}, which we will denote by $\text{Aut}^H(\mathcal{P})$. In particular, the neutral component \((Aut^H(\mathcal{P}))^{\circ}\) is an algebraic group. We have an isomorphism of Lie algebras (see \cite[Theorem 4.2]{Brauto}, \cite[Proposition 2.5, Section 4, Ch II]{D-M})
	\begin{equation}\label{P2}
		\begin{split}
			\beta_{\mathcal{P}}	:\text{Lie}(\text{Aut}^H(\mathcal{P})) & \stackrel{\cong} \longrightarrow	H^0(\mathcal{P}, \mathscr{T}_{\mathcal{P}})^H\\
			  \psi & \longmapsto \delta_{\psi}, 
		\end{split}
	\end{equation}
	where \(\delta_{\psi}\) is determined by the rule 
	\begin{equation*}
		\psi(\widetilde{f})= \widetilde{f} + \varepsilon \, \delta_{\psi}(\widetilde{f}) \in \Gamma (V, \mc{O_P} + \varepsilon \, \mc{O_P})= \Gamma(V, \text{I}_{k} \otimes \mc{O_P})
	\end{equation*}
	for \(\widetilde{f} \in \Gamma(V, \mc{O_P})^H\), \(V\) open subset of \(\mathcal{P}\) and $\psi \in \text{Lie}(\text{Aut}^H(\mathcal{P}))$. 
	
	The map $\theta$ in \eqref{gamma} induces the following map between the corresponding Lie algebras
	\begin{equation*}
		d \theta : \text{Lie}(\text{Aut}^H(\mathcal{P})) \longrightarrow \text{Lie}(\text{Aut}(X))
	\end{equation*}
	and the following diagram commutes
	\begin{equation}\label{preli1}
		\begin{tikzpicture}
			\matrix (m) [matrix of math nodes,row sep=3em,column sep=4em,minimum width=2em] {
				\text{Lie}(\text{Aut}^H(\mathcal{P})) & 	  H^0(\mathcal{P}, \mathscr{T}_{\mathcal{P}})^H\\
				\text{Lie}(\text{Aut}(X)) & H^0(X,\mathscr{T}_X) \\};
			\path[-stealth]
			(m-1-1) edge node [left] {$d \theta$} (m-2-1)
			edge  node [above] {$\beta_{\mathcal{P}}$} (m-1-2)
			(m-2-1) edge node [below] {$\alpha_X$} (m-2-2)
			(m-1-2) edge node [right] {$\eta(X)$} (m-2-2);
		\end{tikzpicture}	
	\end{equation}
	(see \cite[Figure 2]{DKP}).
	

	\section{Atiyah sequence for G-connection}

	Let \(X\) be a variety and \(H\) be a reductive liner algebraic group. Let \(p : \mathcal{P} \rightarrow X\) be a principal \(H\)-bundle. We recall the construction of the Atiyah sequence in this set-up (cf. \cite[Section 3]{DKP}). Since, the map \(p\) is smooth, we have the following relative cotangent sequence which is exact (see \cite[Theorem C.15]{Sernesi}).
\begin{equation*}
	0 \longrightarrow p^* \Omega_X \longrightarrow \Omega_{\mathcal{P}} \longrightarrow \Omega_{\mathcal{P} / X} \longrightarrow 0.
\end{equation*}
	Taking dual and using the fact \(\left(p* \Omega_X \right)^{\vee}=p^* \left( \Omega_X ^{\vee}\right) = p^* \mathscr{T}_X \) (see \cite[Proof of Proposition 1.8]{Stabref}), we get the following exact sequence
	
	\begin{equation}\label{S0.01}
		0 \longrightarrow \mathscr{T}_{\mathcal{P} / X}  \longrightarrow \mathscr{T}_{\mathcal{P}} \longrightarrow p^* \mathscr{T}_X \longrightarrow 0.
	\end{equation}
Since \(p\) is affine and \(H\) is reductive, taking invariant pushforward \(p^H_*:=p_*( \cdot )^H\), we get the following exact sequence 

\begin{equation}\label{preAt}
			0 \longrightarrow p^H_* \left( \mathscr{T}_{\mathcal{P} / X} \right)  \longrightarrow p^H_* (\mathscr{T}_{\mathcal{P}})
			 \longrightarrow p^H_* \left( p^* \mathscr{T}_X\right)  \longrightarrow 0,
\end{equation}
(see \cite[Section 2]{Nevins}).
\begin{defn}
	The Atiyah sheaf associated to the principal bundle is the subsheaf of \(H\)-invariants of $p_*\mathscr{T}_{\mathcal{P}}$:
	\begin{equation*}
		\begin{split}
			\mathcal{A}t(\mathcal{P}):=p_*^H\mathscr{T}_{\mathcal{P}} \subset p_*\mathscr{T}_{\mathcal{P}}.
		\end{split}
	\end{equation*}
\end{defn}
\begin{rmk}\rm {
		The Atiyah sheaf $\mathcal{A}t(\mathcal{P})$ is coherent as \(p: \mathcal{P} \rightarrow X \) is a principal \(H\)-bundle using \cite[Theorem 1.2]{Nevins}. Moreover, since \(p\) is surjective, using \cite[Proposition 8.4.5]{ega1}, we get that $\mathcal{A}t(\mathcal{P})$ is torsion-free.
		
	}
\end{rmk}

Note that \(p^H_* \left( p^* \mathscr{T}_X\right) \cong \mathscr{T}_X\) (see \cite[Section 2]{Nevins}). Also, using \cite[Proposition 3.6]{DKP}, we have \(p^H_* \left( \mathscr{T}_{\mathcal{P} / X} \right)\) is isomorphic to the adjoint bundle \(\text{ad}(\mathcal{P}):=\mathcal{P} \times_H \mathfrak{h}\), where \(H\) acts on its Lie algebra $\mathfrak{h}$ via the adjoint representation. Hence, the exact sequence \eqref{preAt} gives the following short exact sequence 
\begin{equation}\label{Atseq}
		0 \longrightarrow \text{ad}(\mathcal{P})  \stackrel{\iota_0}\longrightarrow \mathcal{A}t(\mathcal{P}) 	\stackrel{\eta}\longrightarrow  \mathscr{T}_X  \longrightarrow 0.
\end{equation}
called the Atiyah sequence. Note that the maps in the Atiyah sequence preserves the natural Lie algebra structures.

\subsection{Atiyah sequence for group action} \label{Atiyah sequence for group action}

Let \(G\) be a connected  algebraic group (not necessarily linear algebraic group) acting on a complete variety \(X\) via the regular morphism $\rho : G \times X \rightarrow X$.  Now $\rho$ induces the following homomorphism of algebraic groups (see \cite[proof of Corollary 2.2]{Brauto}, \cite[Remark 2.4]{Br18})
\begin{equation*}
	\bar{\rho} : G \longrightarrow \text{Aut}^0(X), \, g \mapsto (\rho_g : x \mapsto\rho(g, x)  ).
\end{equation*}
This further induces the following map between the corresponding Lie algebras
\begin{equation*}
	\begin{split}
		d \bar{\rho} ~:~ \mathfrak{g} & \longrightarrow \text{Lie}(\text{Aut}(X)), \\
		(\varphi : \text{Spec}(I_k) \rightarrow G) &\longmapsto (\bar{\rho} \circ \varphi : \text{Spec}(I_k) \rightarrow \text{Aut}^0(X)) . 
	\end{split}
\end{equation*}
Composing it with the isomorphism \eqref{P1}, we get the following Lie algebra homomorphism (see \cite[Proposition 4.4, Section 4, Ch II]{D-M}, \cite[Lemma 1.2]{AGS} and \cite{LI10})
\begin{equation*}
	\alpha= \alpha_X \circ d \bar{\rho} ~:~ \mathfrak{g} \longrightarrow H^0(X, \mathscr{T}_X).
\end{equation*}
 This induces the following evaluation map between sheaves 
\begin{equation}\label{funda_ac}
	\begin{split}
		\zeta ~:~ & \mathcal{O}_X \otimes_{k}  \mathfrak{g} \longrightarrow \mathscr{T}_X \text{ given by},\\
		& \mathcal{O}_X(U) \otimes_{k}  \mathfrak{g} \longrightarrow \mathscr{T}_X(U), \, f \otimes \delta \mapsto f \, \alpha(\delta)|_{U}, 
	\end{split}
\end{equation}
where \(U\) is an open subset of \(X\), \(f \in \mathcal{O}_X(U)\) and \(\delta \in \mathfrak{g}\). Note that $\zeta$ preserves the natural Lie algebra structures between the sheaves, $\mathcal{O}_X \otimes \mathfrak{g}$ has a natural Lie bracket induced from the Lie bracket of $\mathfrak{g}$.  

\begin{rmk}\label{DG-Lieq}{\rm 
	Note that for any point \(x \in X\), the map $\zeta$ yields the following map between the the fibers
	\begin{equation*}
		\zeta_x : \mathfrak{g} \longrightarrow \mathscr{T}_x(X): = \mathscr{T}_X \otimes_{\mathcal{O}_X} \, k(x), 
	\end{equation*}
	which is same as $\alpha$. Also, $\zeta_x$ is identified with the differential of the orbit map \(G \rightarrow X, \, g \mapsto \rho(g, x)\) (cf. \cite[the map $op_X$ on Page 2]{Br13}). The action map $\rho : G \times X \rightarrow X$ induces the following map between the corresponding tangent sheaves (see \eqref{S0.01})
	\begin{equation*}
		 \mathscr{T}_{G \times X} \stackrel{d \rho} \longrightarrow  \rho^* \mathscr{T}_X .
	\end{equation*}
Let \(p_1 : G \times X \rightarrow G\) and \(p_2 : G \times X \rightarrow X\) denote the corresponding projection maps, respectively. Then we have \(\mathscr{T}_{G \times X} \cong p_1^* \, \mathscr{T}_G \oplus p_2^* \, \mathscr{T}_X\). Let \(j : p_1^* \, \mathscr{T}_G \hookrightarrow  \mathscr{T}_{G \times X} \) denote the inclusion map. Consider the following composition of maps:
		\begin{equation}\label{Eq-1}
	p_1^* \, \mathscr{T}_G \stackrel{j} \longrightarrow \mathscr{T}_{G \times X} \stackrel{d \rho} \longrightarrow  \rho^* \mathscr{T}_X.
	\end{equation}
	Let $\nu : X \cong \{1_G\} \times X \hookrightarrow G \times X$ denote the inclusion map given by \(x \mapsto (1_G, x)\). Then pulling back the sequence \eqref{Eq-1} via the map $\nu$ and using the fact that \(\rho \circ \nu =\text{Id}_X\) and \( p_1^* \, \mathscr{T}_G \cong \mathcal{O}_{G \times X} \otimes_{k} \, \mathfrak{g}\) (see \cite[p. 13 ]{BrStr}), we get the following map
	\begin{equation*}
		\zeta' : \mathcal{O}_X \otimes_{k } \mathfrak{g} \longrightarrow \mathscr{T}_X.
	\end{equation*}
	Note that, on fibers, the map $\zeta'$ is given by the differential of the orbit map \(G \rightarrow X, \, g \mapsto \rho(g, x)\). Hence, using Nakayama's lemma we have $\zeta=\zeta'$.	
	}

\end{rmk}

\noindent
Denote by $\mathcal{V}:=\mathcal{O}_X \otimes_{k} \mathfrak{g}$ and consider the map
\begin{equation*}
\eta-\zeta ~ : ~ \mathcal{A}t(\mathcal{P}) \oplus \mathcal{V} \rightarrow \mathscr{T}_X, \, (v, w) \rightarrow \eta(v)-\zeta(w).
\end{equation*}

\begin{defn}
	Define the \(G\)-Atiyah sheaf 
\begin{equation*}
		\mathcal{A}t_{\rho}(\mathcal{P}):= \text{Ker}(\eta - \zeta) 
\end{equation*}
	as a subsheaf of \(\mathcal{A}t(\mathcal{P}) \oplus \mathcal{V}\).
\end{defn}
Clearly, \(\mathcal{A}t_{\rho}(\mathcal{P})\) is torsion-free. Note that being kernel, \(\mathcal{A}t_{\rho}(\mathcal{P})\) acquires a natural Lie algebra structure from the component wise Lie algebra structure from \(\mathcal{A}t(\mathcal{P}) \oplus \mathcal{V}\).  We have the projection map 
\[\text{pr}_2 : \mathcal{A}t_{\rho}(\mathcal{P}) \rightarrow \mathcal{V}\]
and the map
\[\iota : \text{ad}(\mathcal{P}) \rightarrow \mathcal{A}t_{\rho}(\mathcal{P}), \, \iota(v)=(\iota_0(v), 0).\]
Note that $\iota$ is injective and $\text{Ker}(\text{pr}_2)=\text{Im}(\iota)$; hence, we have the following exact sequence
\begin{equation}\label{G-Atseq}
	0 \longrightarrow \text{ad}(\mathcal{P}) \stackrel{\iota} \longrightarrow  \mathcal{A}t_{\rho}(\mathcal{P}) \stackrel{pr_2} \longrightarrow \mathcal{V} \longrightarrow 0.
\end{equation}
We call this the \(G\)-Atiyah sequence for the principal bundle $\mathcal{P}$. Observe that the above sequence is obtained from the Atiyah sequence \eqref{Atseq} by pulling back via the map $\zeta$, defined in \eqref{funda_ac} (see \cite[Chapter III, Section 1]{HShomAG}). Since both the sheaves \(\text{ad}(\mathcal{P})\) and $\mathcal{V}$ are locally free, \(\mathcal{A}t_{\rho}(\mathcal{P})\) is also locally free. Clearly, \(\text{pr}_2\) preserves the Lie algebra structure. Note that $\iota$ also preserves the Lie algebra structure as $\iota_0$ does so. Hence, the maps in the above sequence \eqref{G-Atseq} preserves the natural Lie algebra structures between the sheaves.

\begin{defn}
	A \(G\)-connection on the principal \(H\)-bundle $\mathcal{P}$ is a $\mathcal{O}_X$-linear splitting \(\), i.e., a $\mathcal{O}_X$-module map
	\[\lambda: \mathcal{V} \rightarrow \mathcal{A}t_{\rho}(\mathcal{P}) \text{ such that }\text{pr}_2 \circ \lambda=Id_{\mathcal{V}}.\]
	A \(G\)-connection \(\lambda\) is said to be flat or integrable if moreover \(\lambda\) preserves the Lie algebra structures of the sheaves.
\end{defn}

\begin{rmk}\label{Gat_class}{\rm 
	The \(G\)-Atiyah sequence \eqref{G-Atseq} defines  a class \(a_{\rho}(\mathcal{P}) \in \text{Ext}^1(\mathcal{V}, \text{ad}(\mathcal{P}))\). Thus the principal \(H\)-bundle admits a \(G\)-connection if and only if \(a_{\rho}(\mathcal{P})=0\). Note that
	\[\text{Ext}^1(\mathcal{V}, \text{ad}(\mathcal{P})) =H^1(X, \Hom(\mathcal{V}, \text{ad}(\mathcal{P})))=H^1(X, \text{ad}(\mathcal{P}) \otimes_{k}  \mathfrak{g}^{\vee} ) . \] 
Hence, \(\text{Ext}^1(\mathcal{V}, \text{ad}(\mathcal{P}))\) is isomorphic to the direct sum of finitely many copies of \(H^1(X, \text{ad}(\mathcal{P}))\).  In particular, if \(H^1(X, \text{ad}(\mathcal{P}))=0, \) then $\mathcal{P}$ admits a \(G\)-connection.
}
	
\end{rmk}

\subsection{\(G\)-connection on vector bundle}\label{Gconnection on vector bundle}

 As before, let \(G\) be a connected  algebraic group acting on a complete variety \(X\) via the regular morphism $\rho : G \times X \rightarrow X$. Let $\pi: \mathcal{E} \rightarrow X$ be a vector bundle of rank \(r\) over \(X\) and $p: \mathcal{P} \rightarrow X$ be the associated \(GL(r, k)\)-bundle, called the frame bundle. Then by Remark \ref{Gat_class}, the G-Atiyah sequence \eqref{G-Atseq} for the frame bundle defines  a class \(a_{\rho}(\mathcal{P}) \in H^1(X, \Hom(\mathcal{V}, \text{ad}(\mathcal{P})))\). Consider the following canonical isomorphisms.
\begin{equation*}
	\begin{split}
	&\mathcal{E} ^{\vee} \otimes \mathcal E \cong ad(\mathcal{P}) ~ \text{(cf. \cite[Proposition 9]{At57})},\\
	&	\Hom(\mathcal{V}, \text{ad}(\mathcal{P}))) \cong \Hom(\mathcal{V}, \mathcal{E} ^{\vee} \otimes \mathcal E) \cong \mathcal{V}^{\vee} \otimes \mathcal{E} ^{\vee} \otimes \mathcal E  \cong \Hom(\mathcal{E}, \mathcal{E} \otimes \mathcal{V}^{\vee}).	
	\end{split}
\end{equation*}
Let us denote by \(b_{\rho}(\mathcal{E}) \in H^1(X, \Hom(\mathcal{E}, \mathcal{E} \otimes \mathcal{V}^{\vee}))\) the image of the class \(a_{\rho}(\mathcal{P})\). Then the class  \(b_{\rho}(\mathcal{E})\) corresponds to the following extension (see \cite[Proposition 2]{At57}, \cite[Section 3]{Gray61})
\begin{equation}\label{GAtfVB}
		0 \longrightarrow \mathcal{E} \otimes \mathcal{V}^{\vee}  \stackrel{} \longrightarrow  D_{\rho}(\mathcal{E}) \stackrel{} \longrightarrow \mathcal E \longrightarrow 0.
\end{equation}
Note that the class \(a_{\rho}(\mathcal{P})\) is zero if and only if the class \(b_{\rho}(\mathcal{E})\) is zero. Thus we can define G-connection for the vector bundle $\mathcal{E}$ as an $\mathcal{O}_X$-linear splitting of the sequence \eqref{GAtfVB}.

\begin{prop}
	Let $\mathcal{E}$ be a vector bundle on a complete variety \(X\) admitting a \(G\)-connection. Then there is a \(k\)-linear sheaf homomorphism 
\begin{equation*}
	\nabla : \mathcal{E} \longrightarrow \mathcal{E} \otimes_{\mathcal{O}_X} \mathcal{V}^{\vee}
\end{equation*}
satisfying the following Leibniz like condition
\begin{equation*}
	\nabla(f s)=f \, \nabla (s) + s \otimes \zeta^{\vee}(df),
\end{equation*}
where \(f \in \mathcal{O}_X(U)\) and \(s \in \mathcal{E}(U)\) for open subset \(U \subset X\). Here $\zeta^{\vee}$ is the dual of the map $\zeta$ defined in \eqref{funda_ac}. Note that \(d : \mathcal{O}_X \rightarrow \Omega^1_X\) is the universal derivation and we have the natural map \(\Omega^1_X \rightarrow (\Omega^1_X)^{\vee \vee}\).
\end{prop}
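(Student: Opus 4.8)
The plan is to follow Atiyah's passage from a splitting of the Atiyah sequence to a connection operator, with the roles of $\Omega^1_X$ and the de Rham differential now played by $\mathcal{V}^\vee$ and $\zeta^\vee \circ d$. Write $q : D_{\rho}(\mathcal{E}) \to \mathcal{E}$ for the projection in \eqref{GAtfVB}. A $G$-connection on $\mathcal{E}$, being an $\mathcal{O}_X$-linear splitting of \eqref{GAtfVB}, yields in particular an $\mathcal{O}_X$-linear section $\sigma : \mathcal{E} \to D_{\rho}(\mathcal{E})$ of $q$. First I would reduce the statement to exhibiting one canonical \emph{$k$-linear} (not $\mathcal{O}_X$-linear) section $j : \mathcal{E} \to D_{\rho}(\mathcal{E})$ of $q$ with $j(fs) = f\, j(s) + s \otimes \zeta^\vee(df)$ for $f \in \mathcal{O}_X(U)$ and $s \in \mathcal{E}(U)$: once such a $j$ is available, set $\nabla := j - \sigma$. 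Then $\nabla$ is $k$-linear, has image in $\ker q = \mathcal{E}\otimes\mathcal{V}^\vee$ because $q\circ j = q\circ\sigma = \mathrm{id}_{\mathcal{E}}$, and satisfies $\nabla(fs) = f\,j(s) + s\otimes\zeta^\vee(df) - f\,\sigma(s) = f\,\nabla(s) + s\otimes\zeta^\vee(df)$, which is precisely the required Leibniz identity. So everything comes down to producing $j$.

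To produce $j$, I would use the first-order jet bundle $J^1(\mathcal{E})$, which for any scheme $X$ fits in an exact sequence $0 \to \mathcal{E}\otimes\Omega^1_X \to J^1(\mathcal{E}) \stackrel{\pi_0}\longrightarrow \mathcal{E} \to 0$ and carries the universal first-jet map $j^1 : \mathcal{E}\to J^1(\mathcal{E})$, a $k$-linear section of $\pi_0$ with $j^1(fs) = f\,j^1(s) + s\otimes df$ under the identification $\ker\pi_0 = \mathcal{E}\otimes\Omega^1_X$. Dualizing $\zeta : \mathcal{V}\to\mathscr{T}_X$ and precomposing with the natural map $\Omega^1_X\to(\Omega^1_X)^{\vee\vee} = \mathscr{T}_X^\vee$ gives $\zeta^\vee : \Omega^1_X\to\mathcal{V}^\vee$. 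The key point will be that the extension \eqref{GAtfVB} is canonically the pushout of the jet sequence along $\mathrm{id}_{\mathcal{E}}\otimes\zeta^\vee : \mathcal{E}\otimes\Omega^1_X\to\mathcal{E}\otimes\mathcal{V}^\vee$. Granting this, the pushout comes equipped with a morphism of short exact sequences $\phi : J^1(\mathcal{E})\to D_{\rho}(\mathcal{E})$ satisfying $q\circ\phi = \pi_0$ and $\phi|_{\mathcal{E}\otimes\Omega^1_X} = \mathrm{id}_{\mathcal{E}}\otimes\zeta^\vee$; setting $j := \phi\circ j^1$ then gives $q\circ j = \mathrm{id}_{\mathcal{E}}$ and $j(fs) = \phi\big(f\,j^1(s) + s\otimes df\big) = f\,j(s) + s\otimes\zeta^\vee(df)$, and $\nabla = j - \sigma$ completes the proof.

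The one substantive step, and the hard part, is the claim that $D_{\rho}(\mathcal{E})$ is the pushout of $J^1(\mathcal{E})$ along $\mathrm{id}_{\mathcal{E}}\otimes\zeta^\vee$. Unwinding the definition of $b_{\rho}(\mathcal{E})$, this amounts to checking that, under the canonical isomorphisms $\Hom(\mathcal{V},\text{ad}(\mathcal{P})) \cong \mathcal{V}^\vee\otimes\mathcal{E}^\vee\otimes\mathcal{E} \cong \Hom(\mathcal{E},\mathcal{E}\otimes\mathcal{V}^\vee)$, the $\zeta$-pullback of the Atiyah extension \eqref{Atseq} of the frame bundle corresponds to the $(\mathrm{id}_{\mathcal{E}}\otimes\zeta^\vee)$-pushforward of the jet-bundle class of $\mathcal{E}$. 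This will follow from the classical identification of the Atiyah extension \eqref{Atseq} of the frame bundle with the jet sequence (Atiyah, \cite[Proposition 2]{At57}) together with the naturality of pullback and pushout of extensions; the only extra care is that $X$ may be singular, so $\Omega^1_X$ is merely coherent and the comparison must be carried out after applying $\Omega^1_X\to(\Omega^1_X)^{\vee\vee}$, which is exactly why that map appears in the statement. I expect this bookkeeping with the canonical identifications and the double dual to be the only real obstacle; the rest is formal. Alternatively, to bypass $J^1(\mathcal{E})$ one can argue \'etale-locally: over an \'etale trivialization of $\mathcal{P}$, available by isotriviality, the sequence \eqref{GAtfVB} has a tautological splitting, $\nabla$ is then forced to be the componentwise de Rham operator post-composed with $\zeta^\vee$ corrected by the $\mathcal{O}_X$-linear discrepancy between $\sigma$ and that tautological splitting, and the local operators patch because any two tautological splittings differ by an $\mathcal{O}_X$-linear map.
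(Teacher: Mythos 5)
Your formal skeleton is right and is the standard one: if a $k$-linear section $j$ of $q:D_{\rho}(\mathcal{E})\to\mathcal{E}$ with $j(fs)=f\,j(s)+s\otimes\zeta^{\vee}(df)$ exists, then $\nabla:=j-\sigma$ does everything the proposition asks. But the proposal reduces the proposition to the claim that the class of \eqref{GAtfVB} equals $(\mathrm{id}_{\mathcal{E}}\otimes\zeta^{\vee})_*$ of the first-jet class, and then does not prove that claim; you yourself flag it as ``the one substantive step.'' This is a genuine gap, because that identification \emph{is} the content of the proposition. Recall how $b_{\rho}(\mathcal{E})$ is defined in the paper: it is the image of $a_{\rho}(\mathcal{P})=\zeta^*(\text{class of \eqref{Atseq}})$ under the purely tensorial isomorphism $\Hom(\mathcal{V},\mathrm{ad}(\mathcal{P}))\cong\Hom(\mathcal{E},\mathcal{E}\otimes\mathcal{V}^{\vee})$, and $D_{\rho}(\mathcal{E})$ is ``the corresponding extension,'' i.e.\ it is specified only by its class. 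Nothing in that definition yet says that $D_{\rho}(\mathcal{E})$ has anything to do with first-order differential operators or jets; establishing that link requires showing that an $H$-invariant vector field on the frame bundle lifting $\zeta(w)$ acts as a first-order operator on sections of $\mathcal{E}$ with symbol $\zeta(w)\otimes\mathrm{id}_{\mathcal{E}}$, compatibly with the isomorphism $\mathrm{ad}(\mathcal{P})\cong\mathcal{E}^{\vee}\otimes\mathcal{E}$. On a singular $X$ this does not simply ``follow from Atiyah's Proposition 2 plus naturality'': Atiyah's comparison of the Atiyah sequence with the jet sequence is carried out for locally free $\Omega^1$, the citation of \cite[Proposition 2]{At57} in the paper is only for the correspondence between $H^1$ and extensions, and here $\Omega^1_X$ may have torsion, so the comparison must be built through $\Omega^1_X\to(\Omega^1_X)^{\vee\vee}$ from scratch. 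Your closing \'etale-local variant has the same circularity: to say that \eqref{GAtfVB} has a ``tautological splitting'' over a trivialization, and that the resulting operator is ``componentwise de Rham composed with $\zeta^{\vee}$,'' you already need a concrete differential-operator model of $D_{\rho}(\mathcal{E})$, which is exactly the unproven step.

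For comparison, the paper's proof is precisely a direct construction of the missing ingredient, bypassing jets entirely. Given the $G$-connection $\lambda$ on the frame bundle, write $\lambda(w)=(\widetilde{\delta}_w,w)$ with $\widetilde{\delta}_w$ an $H$-invariant derivation on $\mathcal{P}$ satisfying $\eta(\widetilde{\delta}_w)=\zeta(w)$; using the tautological $GL(r,k)$-equivariant trivialization $\Psi:p^*\mathcal{E}\cong\mathcal{O}_{\mathcal{P}}^{\oplus r}$, one sets $\widetilde{\nabla}(s\otimes w)=\Psi^{-1}(\widetilde{\delta}_w(\tilde f_1),\dots,\widetilde{\delta}_w(\tilde f_r))$, checks $H$-invariance so the result descends, and the Leibniz identity drops out of the derivation property of $\widetilde{\delta}_w$ together with $\eta(\widetilde{\delta}_w)=\zeta(w)$. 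That computation is exactly the verification that invariant lifts of $\zeta(w)$ act as first-order operators with the correct symbol --- i.e.\ the proof of the identification your argument takes for granted. If you want to keep your extension-theoretic route, you must supply this computation (or an equivalent \v{C}ech comparison of the two cocycles) rather than defer it.
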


\begin{proof} Let $\mathcal{P}$ denote the principal frame bundle associated to $\mathcal{E}$.
	Consider the following canonical isomorphism of $\mathcal{O}_{\mathcal{P}}$-modules (see \cite[proof of Proposition 4.6]{DKP})
	\begin{equation*}
		\Psi : p^* \mathcal{E} \longrightarrow \mathcal{O}_{\mathcal{P}}^{\oplus r}, \, \tilde{s} \longmapsto (\tilde{f}_1, \ldots, \tilde{f}_r),
	\end{equation*}
	where $\tilde{s}$ is a section of \(p^* \mathcal{E}\) over the open subset \(V \subset \mathcal{P}\) and 
	\begin{equation*}
		\tilde{s}(e)=(e, \tilde{f}_1 e_1 + \ldots + \tilde{f}_r e_r)
	\end{equation*}
	for all \(e \in V\) and \(e=(e_1, \ldots, e_r)\) is an ordered basis of the fiber $\mathcal{E}(x)$, \(x=p(e)\). Note that $\Psi$ is  \(GL(r, k)\)-equivariant. Define the map
	\begin{equation*}
		\widetilde{\nabla}: \mathcal{E} \otimes_k \mathcal{V} \longrightarrow \mathcal{E} 
	\end{equation*}
	as follows: Let $\lambda : \mathcal{V} \rightarrow \mathcal{A}t_{\rho}(\mathcal{P})$ be a \(G\)-connection on $\mathcal{P}$. Let \(s\) and \(w\) be local sections of $\mathcal{E}$ and $\mathcal{V}$, respectively, over  an affine open subset \(U=\text{Spec}(B) \subset X \). Let \(\lambda(w)=(\widetilde{\delta}_w, w)\), where \(\widetilde{\delta}_w\) is a local section of \(\mathcal{A}t(\mathcal{P})\) such that \(\eta(\widetilde{\delta}_w)=\zeta(w)\). Let \(p^{-1}(U)=\text{Spec}(A) \subset \mathcal{P}\). Then \((p^{*} \mathcal{E})(p^{-1}(U))=\mathcal{E}(U) \otimes_B A \). So \(s \otimes 1\) defines a local section of $p^{*} \mathcal{E}$  over \(p^{-1}(U)\), where \(1\) denotes the multiplicative identity of the ring \(A\). Write
	\begin{equation*}
		\Psi(s \otimes 1)=(\tilde{f}_1, \ldots, \tilde{f}_r) \in \mathcal{O}_{\mathcal{P}}^{\oplus r}.
	\end{equation*}
Then, define
	\begin{equation*}
		\widetilde{\nabla}(s \otimes w)=\Psi^{-1}(\widetilde{\delta}_w(\tilde{f}_1), \ldots, \widetilde{\delta}_w(\tilde{f}_r)).
	\end{equation*}
	Note that \(\widetilde{\nabla}(s \otimes w)\) is an element of \(\mathcal{E}(U)\) as $\Psi$ is  \(GL(r, k)\)-equivariant. Since, the map $\Psi$ is $\mathcal{O}_{\mathcal{P}}$-linear and $\lambda$ is $\mathcal{O}_X$-linear, we have
	\begin{equation}\label{cU-linear}
		\widetilde{\nabla}(cs \otimes w)=c \, \widetilde{\nabla}(s \otimes w)  \text{ and }  \widetilde{\nabla}(s \otimes f w)=f \, \widetilde{\nabla}(s \otimes w).
	\end{equation}
	Also, note that \[fs \otimes 1=(p^{\sharp}f) (s \otimes 1).\] 
	Then, $$\Psi(fs \otimes 1)=(p^{\sharp}f) \, \Psi(s \otimes 1)=((p^{\sharp}f) \tilde{f}_1, \ldots, (p^{\sharp}f) \tilde{f}_r).$$ Thus, we have
	\begin{equation}\label{leib0}
		\begin{split}
			\widetilde{\nabla}(fs \otimes w) & =\Psi^{-1}(\widetilde{\delta}_w((p^{\sharp}f) \tilde{f}_1), \ldots, \widetilde{\delta}_w( (p^{\sharp}f) \tilde{f}_r)).
		\end{split}
	\end{equation}
	Note that for \(i=1, \ldots, r\), we have
	\begin{equation}\label{leib2}
		\widetilde{\delta}_w((p^{\sharp}f ) \tilde{f}_i)=(p^{\sharp}f ) \, \widetilde{\delta}_w(\tilde{f}_i) + \tilde{f}_i \, \widetilde{\delta}_w((p^{\sharp}f )).
	\end{equation}
	
Then from \eqref{leib0} and \eqref{leib2}, we have
\begin{equation}\label{preleib}
	\begin{split}
		\widetilde{\nabla}(fs \otimes w)=& (p^{\sharp}f) \Psi^{-1}(\widetilde{\delta}_w(\tilde{f}_1), \ldots, \widetilde{\delta}_w(\tilde{f}_r)) + \widetilde{\delta}_w((p^{\sharp}f )) \Psi^{-1}(\tilde{f}_1, \ldots, \tilde{f}_r)\\
		= & f \, \widetilde{\nabla}(s \otimes w) + \zeta(w)(f) \, s,
	\end{split}
	\end{equation}	
	where we have used
	\begin{equation*}
		\eta(\widetilde{\delta}_w)(f)=p_*(\widetilde{\delta}_w(p^{\sharp}(f) )).
	\end{equation*}
	Let us define
	\begin{equation*}
		\nabla : \mathcal{E} \longrightarrow \mathcal{E} \otimes_{\mathcal{O}_X} \mathcal{V}^{\vee}
	\end{equation*}
	as follows: for a section \(s\) of $\mathcal{E}$ over an open subset \(U \subset X \) consider the $\mathcal{O}_X(U)$-linear map
	\begin{equation*}
		\widetilde{\nabla}^U_s : \mathcal{V}(U) \longrightarrow \mathcal{E}(U), \text{ given by } w \longmapsto \widetilde{\nabla}^U(s \otimes w).
	\end{equation*}
	Finally define
	\begin{equation*}
		\nabla^U(s)=\Upsilon \left( \widetilde{\nabla}^U_s \right), 
	\end{equation*}
	where $\Upsilon : \Hom(\mathcal{V}, \mathcal{E}) \cong \mathcal{E} \otimes_{\mathcal{O}_X} \mathcal{V}^{\vee}$ is the canonical isomorphism. Then from \eqref{cU-linear} and \eqref{preleib}, it follows that $\nabla$ is a \(k\)-linear map satisfying Leibniz rule.
	\end{proof}

\begin{rmk}{\rm 
	The above proposition suggests that we can define \(G\)-connection for any coherent $\mathcal{O}_X$-module as well. Note that when \(X\) is nonsingular and the map $\zeta$ is surjective any coherent $\mathcal{O}_X$-module admitting \(G\)-connection is necessarily locally free (using \cite[Proposition 1.2, p. 211]{DBorel}). This does not hold for singular varieties; for example, the cotangent sheaf always admits a \(G\)-connection (use Proposition \ref{eqtoconn}) although it need not be locally free.
}
\end{rmk}

\subsection{Homomorphisms and induced \(G\)-connections}

Let \(G\) be a connected  algebraic group acting on a complete variety \(X\) via the regular morphism $\rho : G \times X \rightarrow X$. Let \(\tau : G_1 \rightarrow G\) be a homomorphism of connected algebraic groups. This induces an action of \(G_1\) on \(X\) as follows:
\begin{equation*}
	\rho_1 : G_1 \times X \longrightarrow X, \text{ given by } (g_1, x) \longmapsto \rho(\tau(g_1), x).
\end{equation*}
Now $\rho_1$ induces a homomorphism of algebraic groups 
\begin{equation*}
	\bar{\rho_1} : G_1 \longrightarrow \text{Aut}^0(X), \, g_1 \mapsto ((\rho_1)_{g_1} : x \longmapsto\rho(\tau(g_1), x) ).
\end{equation*}
Thus we have \(\bar{\rho_1}= \bar{\rho} \circ \tau\). This implies that the corresponding maps between their Lie algebras satisfy 
\(d\bar{\rho_1}= d\bar{\rho} \circ d\tau\). This implies we get \(\alpha_1= \alpha \circ d\tau\).  Hence, the corresponding map defined in \eqref{funda_ac} is 
\begin{equation*}
	\begin{split}
		\zeta_1 ~:~ & \mathcal{O}_X \otimes_{k}  \mathfrak{g}_1 \longrightarrow \mathscr{T}_X \text{ given by},\\
		& \mathcal{O}_X(U) \otimes_{k}  \mathfrak{g}_1 \longrightarrow \mathscr{T}_X(U), \, f \otimes \delta_1 \mapsto f \, \alpha_1(\delta)|_{U},
	\end{split}
\end{equation*}
where \(U\) is an open subset of \(X\), \(f \in \mathcal{O}_X(U)\) and \(\delta \in \mathfrak{g}_1\). Thus, we get 
\begin{equation*}
	\zeta_1= \zeta \circ d\tau. 
\end{equation*}
Hence, the following diagram commutes
\begin{equation*}
	\begin{tikzpicture}[description/.style={fill=white,inner sep=2pt}]
		\matrix (m) [matrix of math nodes, row sep=3em,
		column sep=2.5em, text height=1.5ex, text depth=0.25ex]
		{ \mathcal{A}t(\mathcal{P}) \oplus (\mathcal{O}_X \otimes_{k} \mathfrak{g}_1)    & &   \mathcal{A}t(\mathcal{P}) \oplus (\mathcal{O}_X \otimes_{k} \mathfrak{g}_)  \\
			\mathscr{T}_X    & &   \mathscr{T}_X  \\ };
		\path[->]  (m-1-1) edge node[auto] {}(m-1-3);
		\path[ ->] (m-2-1) edge node[below] {}(m-2-3);
		\path[->] (m-1-1) edge node[auto] {$\eta - \zeta_1$}(m-2-1);
		\path[->] (m-1-1) edge node[above] {$Id \oplus d\tau$}(m-1-3);
		\path[ ->] (m-1-3) edge node[auto] {$\eta - \zeta$} (m-2-3);
		\path[->] (m-2-1) edge node[above] {$Id$} (m-2-3);
	\end{tikzpicture}	.
\end{equation*}
This further induces the following map between the kernels,
\begin{equation*}
\widetilde{\tau}:	\mathcal{A}t_{\rho_1}(\mathcal{P}) \longrightarrow \mathcal{A}t_{\rho}(\mathcal{P}), \, (v, w_1) \longmapsto (v, d\tau(w_1)).
\end{equation*}
Thus, we get the following commutative diagram of the corresponding Atiyah sequences.

\begin{equation}\label{homotoindeq1}
	\xymatrixrowsep{1.8pc} \xymatrixcolsep{2.2pc}
	\xymatrix{
		0\ar@{->}[r]&
		\text{ad}(\mathcal{P})\ar@{->}[r]^{\iota_1}\ar@{->}[d]^{Id}& \mathcal{A}t_{\rho_1}(\mathcal{P})\ar@{->}[d]^{\widetilde{\tau}}\ar@{->}[r]^{pr_2}& \mathcal{O}_X \otimes_{k } \mathfrak{g}_1\ar@{->}[d]_{d\tau}\ar@{->}[d]\ar@{->}[r]&0\\
		0\ar@{->}[r]&
		\text{ad}(\mathcal{P})\ar@{->}[r]^{\iota}&\mathcal{A}t_{\rho}(\mathcal{P})\ar@{->}[r]^{pr_2}&\mathcal{O}_X \otimes_{k } \mathfrak{g}\ar@{->}[r]&{0}.
	}
\end{equation}
\begin{prop}\label{H2H1}
	A \(G\)-connection on the principal \(H\)-bundle $\mathcal{P}$ induces a \(G_1\)-connection on $\mathcal{P}$.
\end{prop}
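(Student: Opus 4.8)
\emph{Proof proposal.} The plan is to recognize the $G_1$-Atiyah sequence \eqref{G-Atseq} (formed using $\rho_1$) as the pullback of the $G$-Atiyah sequence (formed using $\rho$) along the map $d\tau : \mathcal{O}_X \otimes_k \mathfrak{g}_1 \to \mathcal{V}$, and then invoke the elementary fact that the pullback of a split short exact sequence is again split. Concretely, a $G$-connection is by definition an $\mathcal{O}_X$-linear splitting $\lambda : \mathcal{V} \to \mathcal{A}t_{\rho}(\mathcal{P})$ of $\text{pr}_2$, and from it I want to manufacture an $\mathcal{O}_X$-linear splitting $\lambda_1 : \mathcal{O}_X \otimes_k \mathfrak{g}_1 \to \mathcal{A}t_{\rho_1}(\mathcal{P})$ of the corresponding projection in the top row of \eqref{homotoindeq1}.

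First I would check that the right-hand square of the commutative diagram \eqref{homotoindeq1} is Cartesian. Unwinding definitions, the fiber product $\mathcal{A}t_{\rho}(\mathcal{P}) \times_{\mathcal{V}} (\mathcal{O}_X \otimes_k \mathfrak{g}_1)$ consists of pairs $\bigl((v,w),\,w_1\bigr)$ with $v$ a local section of $\mathcal{A}t(\mathcal{P})$ satisfying $\eta(v) = \zeta(w)$ and with $w = d\tau(w_1)$; using the identity $\zeta_1 = \zeta \circ d\tau$ established just before the statement, this locus is exactly $\{(v,w_1) : \eta(v) = \zeta_1(w_1)\} = \text{Ker}(\eta - \zeta_1) = \mathcal{A}t_{\rho_1}(\mathcal{P})$, and under this identification $\widetilde{\tau}$ and $\text{pr}_2$ are the two projections of the fiber product. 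Granting this, I would define
\[
\lambda_1 : \mathcal{O}_X \otimes_k \mathfrak{g}_1 \longrightarrow \mathcal{A}t_{\rho_1}(\mathcal{P}), \qquad \lambda_1(w_1) := \bigl(\text{pr}_1(\lambda(d\tau(w_1))),\, w_1\bigr),
\]
where $\text{pr}_1 : \mathcal{A}t_{\rho}(\mathcal{P}) \hookrightarrow \mathcal{A}t(\mathcal{P}) \oplus \mathcal{V} \to \mathcal{A}t(\mathcal{P})$ is the first projection. One checks this pair really lies in $\mathcal{A}t_{\rho_1}(\mathcal{P})$: if $\lambda(d\tau(w_1)) = (a,\, d\tau(w_1))$ with $\eta(a) = \zeta(d\tau(w_1)) = \zeta_1(w_1)$, then $(a,w_1) \in \text{Ker}(\eta - \zeta_1)$. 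Since $\lambda$, $d\tau$ and $\text{pr}_1$ are all $\mathcal{O}_X$-linear and compatible with restriction to open subsets, $\lambda_1$ is a well-defined $\mathcal{O}_X$-module homomorphism, and $\text{pr}_2 \circ \lambda_1 = \mathrm{Id}$ holds by construction; hence $\lambda_1$ is a $G_1$-connection.

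An equivalent and slightly cleaner phrasing works at the level of extension classes via Remark \ref{Gat_class}: the Cartesian square identifies $a_{\rho_1}(\mathcal{P})$ with the image of $a_{\rho}(\mathcal{P})$ under the pullback map $\mathrm{Ext}^1(\mathcal{V}, \text{ad}(\mathcal{P})) \to \mathrm{Ext}^1(\mathcal{O}_X \otimes_k \mathfrak{g}_1, \text{ad}(\mathcal{P}))$ induced by $d\tau$, so that $a_{\rho}(\mathcal{P}) = 0$ immediately forces $a_{\rho_1}(\mathcal{P}) = 0$. I do not expect any genuine obstacle here: the two points requiring (minor) care are the verification that the right square of \eqref{homotoindeq1} is Cartesian — which, as noted, reduces entirely to the already-recorded identity $\zeta_1 = \zeta \circ d\tau$ — and the routine confirmation that the prescription for $\lambda_1$ glues to a morphism of sheaves and not merely a map defined on sections. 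If one wants the integrable version as well, the same $\lambda_1$ works, since $d\tau$ and $\widetilde{\tau}$ preserve the Lie algebra structures, so a flat $\lambda$ yields a flat $\lambda_1$.
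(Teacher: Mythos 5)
Your proposal is correct and follows essentially the same route as the paper: both arguments rest on the commutative diagram \eqref{homotoindeq1}, which exhibits the $G_1$-Atiyah sequence as the pullback of the $G$-Atiyah sequence along $d\tau$, and then transfer the splitting. The only cosmetic difference is that the paper phrases the transfer via a retraction of $\iota$ while you construct the section of $\mathrm{pr}_2$ explicitly (and verify the Cartesian square), which are equivalent formulations for a short exact sequence of $\mathcal{O}_X$-modules.
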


\begin{proof}
	If the principal \(H\)-bundle $\mathcal{P}$ admits a \(G\)-connection, the map $\iota$ in \eqref{homotoindeq1} admits a splitting. This will produce a splitting of $\iota_1$ by the commutativity of the exact sequences in \eqref{homotoindeq1}. Thus $\mathcal{P}$ will admit a \(G_1\)-connection as well. 
\end{proof}

Next we consider \(G\)-connections induced on associated bundles of a principal bundle. This has been studied for logarithmic connections in \cite[Section 5]{DKP}. Let \(X\) be a complete variety and $\tau: H \rightarrow H_1$ be a homomorphism between reductive algebraic groups \(H\) and \(H_1\). Let $p : \mathcal{P} \rightarrow X$ be a  principal \(H\)-bundle. The associated principal \(H_1\)-bundle \(\mathcal{P}_1 := \mathcal{P} \times^H  H_1\) is constructed as the quotient \[\left( \mathcal{P} \times  H_1 \right)/ \sim \] where 
\[(e,h_1) \sim (eh,\tau(h)^{-1}h_1) \text{ for all } h \in H.\] 
The projection map \[p': \mathcal{P}_1  \rightarrow X \text{ is given by }  p'([e,h_1])=p(e).\] 

\begin{prop}\label{inducedconn3.1}
	With the notation as above, a (flat) \(G\)-connection on the principal \(H\)-bundle $\mathcal{P}$ induces a (flat)  \(G\)-connection on the associated principal $H_1$-bundle $\mathcal{P}_1$.
\end{prop}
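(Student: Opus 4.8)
The plan is to realize the $G$-Atiyah sequence of $\mathcal{P}_1$ as a pushout of the $G$-Atiyah sequence of $\mathcal{P}$ along the natural map $\mathrm{ad}(\mathcal{P}) \to \mathrm{ad}(\mathcal{P}_1)$ induced by $d\tau : \mathfrak{h} \to \mathfrak{h}_1$, and then argue that a splitting of the top sequence produces a splitting of the bottom one by functoriality of pushouts.

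First I would recall the map on adjoint bundles. The homomorphism $\tau : H \to H_1$ induces $d\tau : \mathfrak{h} \to \mathfrak{h}_1$, which is $H$-equivariant (for the adjoint action of $H$ on $\mathfrak{h}_1$ via $\tau$), hence gives an $\mathcal{O}_X$-linear map $\mathrm{ad}(\mathcal{P}) = \mathcal{P}\times^H\mathfrak{h} \to \mathcal{P}\times^H\mathfrak{h}_1 \cong \mathcal{P}_1 \times^{H_1}\mathfrak{h}_1 = \mathrm{ad}(\mathcal{P}_1)$; call it $\widetilde{d\tau}$. Next, the bundle map $\mathcal{P} \to \mathcal{P}_1$, $e \mapsto [e,1_{H_1}]$, is $\tau$-equivariant and covers the identity on $X$, so it induces a morphism of the ordinary Atiyah sequences \eqref{Atseq}; concretely, $H$-invariant vector fields on $\mathcal{P}$ push forward to $H_1$-invariant vector fields on $\mathcal{P}_1$ lying over the same vector field on $X$, giving a map $\mathcal{A}t(\mathcal{P}) \to \mathcal{A}t(\mathcal{P}_1)$ compatible with the anchors $\eta$ and the inclusions of the adjoint bundles. (This is exactly the content of \cite[Section 5]{DKP} in the logarithmic setting; the same argument works here.) Since the $G$-Atiyah sequence \eqref{G-Atseq} is obtained from \eqref{Atseq} by pulling back along $\zeta : \mathcal{V} \to \mathscr{T}_X$, and the map $\zeta$ is the same for $\mathcal{P}$ and $\mathcal{P}_1$ (it depends only on the $G$-action on $X$), the morphism $\mathcal{A}t(\mathcal{P}) \to \mathcal{A}t(\mathcal{P}_1)$ lifts to a morphism $\mathcal{A}t_\rho(\mathcal{P}) \to \mathcal{A}t_\rho(\mathcal{P}_1)$ on the fibered products, $(v,w) \mapsto (\text{image of }v, w)$, fitting into a commutative ladder
\begin{equation*}
	\xymatrix{
		0\ar[r]& \mathrm{ad}(\mathcal{P})\ar[r]^{\iota}\ar[d]^{\widetilde{d\tau}}& \mathcal{A}t_\rho(\mathcal{P})\ar[d]\ar[r]^{\mathrm{pr}_2}& \mathcal{V}\ar[d]^{Id}\ar[r]&0\\
		0\ar[r]& \mathrm{ad}(\mathcal{P}_1)\ar[r]^{\iota}&\mathcal{A}t_\rho(\mathcal{P}_1)\ar[r]^{\mathrm{pr}_2}&\mathcal{V}\ar[r]&0.
	}
\end{equation*}
In terms of extension classes this says $\widetilde{d\tau}_* \, a_\rho(\mathcal{P}) = a_\rho(\mathcal{P}_1)$ in $\mathrm{Ext}^1(\mathcal{V}, \mathrm{ad}(\mathcal{P}_1))$.

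Given a $G$-connection $\lambda : \mathcal{V} \to \mathcal{A}t_\rho(\mathcal{P})$ on $\mathcal{P}$, the composite $\mathcal{V} \xrightarrow{\lambda} \mathcal{A}t_\rho(\mathcal{P}) \to \mathcal{A}t_\rho(\mathcal{P}_1)$ is an $\mathcal{O}_X$-linear map splitting $\mathrm{pr}_2$ for $\mathcal{P}_1$, since the right-hand vertical arrow is the identity on $\mathcal{V}$ and the square commutes; this is the induced $G$-connection on $\mathcal{P}_1$. For the flat case, I would observe that each arrow in the ladder preserves the Lie algebra structures on the sheaves — this holds for $\widetilde{d\tau}$ because $d\tau$ is a Lie algebra homomorphism, and for the middle arrow because it is induced by pushforward of vector fields, which is a bracket-preserving operation — so if $\lambda$ is moreover a Lie algebra homomorphism, the composite $\mathcal{V} \to \mathcal{A}t_\rho(\mathcal{P}_1)$ is too, giving a flat $G$-connection.

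The main obstacle is the careful verification that pushforward of $H$-invariant vector fields along $\mathcal{P} \to \mathcal{P}_1$ is well defined as a map of sheaves $\mathcal{A}t(\mathcal{P}) \to \mathcal{A}t(\mathcal{P}_1)$ and is compatible with both the anchor maps and the adjoint-bundle identifications $p^H_*\mathscr{T}_{\mathcal{P}/X} \cong \mathrm{ad}(\mathcal{P})$; this requires working in local trivializations (or étale-locally, using local isotriviality) and tracking the identifications of Section \ref{Atiyah sequence for group action}. Once that morphism of ordinary Atiyah sequences is in hand, the extension to the $G$-Atiyah sequences and the splitting argument are formal, since everything is a pullback along the fixed map $\zeta$.
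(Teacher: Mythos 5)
Your proposal is correct and follows essentially the same route as the paper: both start from the morphism of ordinary Atiyah sequences induced by $\mathcal{P}\to\mathcal{P}_1$ (citing \cite[Section 5]{DKP}), lift it to the fibered products over $\zeta$ to get the map $(v,w)\mapsto(\mathrm{At}(\tau)(v),w)$ on $G$-Atiyah sheaves, and obtain the induced connection by composing $\lambda$ with this middle vertical arrow, with flatness following because that arrow preserves the Lie brackets. The pushout framing in your opening paragraph is only a change of presentation; the construction you actually carry out is the paper's.
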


\begin{proof}
	Note that we have the following commutative diagram of the corresponding Atiyah sequences
	
	\begin{equation*}
		\xymatrixrowsep{1.8pc} \xymatrixcolsep{2.2pc}
		\xymatrix{
			0\ar@{->}[r]&
			\text{ad}(\mathcal{P})\ar@{->}[r]^{\iota_0}\ar@{->}[d]^{\text{ad}(\tau)}& \mathcal{A}t(\mathcal{P})\ar@{->}[d]^{\text{At}(\tau)}\ar@{->}[r]^{\eta}&\mathscr{T}_X\ar@{->}[d]_{Id}\ar@{->}[d]\ar@{->}[r]&0\\
			0\ar@{->}[r]&
			\text{ad}(\mathcal{P}_1)\ar@{->}[r]^{\iota_0'}&\mathcal{A}t(\mathcal{P}_1)\ar@{->}[r]^{\eta'}&\mathscr{T}_X\ar@{->}[r]&{0},
		}
	\end{equation*}
(see \cite[proof of the Proposition 5.1]{DKP}). From the commutativity of the second square in the above diagram, we get the following commutative diagram
\begin{equation*}
	\begin{tikzpicture}[description/.style={fill=white,inner sep=2pt}]
		\matrix (m) [matrix of math nodes, row sep=3em,
		column sep=2.5em, text height=1.5ex, text depth=0.25ex]
		{ \mathcal{A}t(\mathcal{P}) \oplus \mathcal{V}    & &   \mathscr{T}_X  \\
			\mathcal{A}t(\mathcal{P}_1) \oplus \mathcal{V}     & &   \mathscr{T}_X  \\ };
		\path[->]  (m-1-1) edge node[auto] {}(m-1-3);
		\path[ ->] (m-2-1) edge node[below] {}(m-2-3);
		\path[->] (m-1-1) edge node[auto] {$\text{At}(\tau) \oplus Id$}(m-2-1);
		\path[->] (m-1-1) edge node[above] {$\eta-\zeta$}(m-1-3);
		\path[ ->] (m-1-3) edge node[auto] {$Id$} (m-2-3);
		\path[->] (m-2-1) edge node[above] {$\eta'-\zeta$} (m-2-3);
	\end{tikzpicture}	.
\end{equation*} 
This further induces the following map between the kernels,
\begin{equation*}
	\text{At}_{\rho}(\tau):	\mathcal{A}t_{\rho}(\mathcal{P}) \longrightarrow \mathcal{A}t_{\rho}(\mathcal{P}_1), \, (v, w) \longmapsto (\text{At}(\tau)(v), w).
\end{equation*}
Thus, we get the following commutative diagram of the corresponding Atiyah sequences.

\begin{equation}\label{homotoindeq1.1}
	\xymatrixrowsep{1.8pc} \xymatrixcolsep{2.2pc}
	\xymatrix{
		0\ar@{->}[r]&
		\text{ad}(\mathcal{P})\ar@{->}[r]^{\iota}\ar@{->}[d]^{\text{ad}(\tau)}& \mathcal{A}t_{\rho}(\mathcal{P})\ar@{->}[d]^{\text{At}_{\rho}(\tau)}\ar@{->}[r]^{pr_2}& \mathcal{V}\ar@{->}[d]_{Id}\ar@{->}[d]\ar@{->}[r]&0\\
		0\ar@{->}[r]&
		\text{ad}(\mathcal{P}_1)\ar@{->}[r]^{\iota_1}&\mathcal{A}t_{\rho}(\mathcal{P}_1)\ar@{->}[r]^{pr_2}&\mathcal{V}\ar@{->}[r]&{0}.
	}
\end{equation}
   Let the principal $H$-bundle $\mathcal{P}$ admits a \(G\)-connection $h$, then from the commutativity of \eqref{homotoindeq1.1}, $\text{At}_{\rho}(\tau) \circ h$ will be a \(G\)-connection on $\mathcal{P}_1$. Note that the map \(\text{At}(\tau)\) preserves the Lie algebra structure; hence, does the map  $\text{At}_{\rho}(\tau)$. Thus, if $h$ is  flat, the induced \(G\)-connection $\text{At}_{\rho}(\tau) \circ h$ is also flat.
\end{proof}

\begin{prop}\label{H1_2H}
	Let $\tau : H \rightarrow H_1$ be an injective homomorphism between reductive algebraic groups \(H\) and \(H_1\). Let $p : \mathcal{P} \rightarrow X$ be a  principal \(H\)-bundle. Then a (flat) \(G\)-connection on the associated principal \(H_1\)-bundle $\mathcal{P}_1:= \mathcal{P} \times^H  H_1$ induces a (flat) \(G\)-connection on the principal $H$-bundle $\mathcal{P}$.
\end{prop}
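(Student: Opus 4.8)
The plan is to use the injectivity of $\tau$ to realize $\mathcal{A}t_{\rho}(\mathcal{P})$ as a direct summand of $\mathcal{A}t_{\rho}(\mathcal{P}_1)$, compatibly with the projections onto $\mathcal{V}$, and then to push a $G$-connection on $\mathcal{P}_1$ down to $\mathcal{P}$ through the resulting retraction.

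First I would record the algebraic input: since $\tau$ is injective and $H$ is reductive over a field of characteristic zero, $d\tau$ identifies $\mathfrak{h}$ with an $H$-submodule of $\mathfrak{h}_1$ (where $H$ acts on $\mathfrak{h}_1$ through $\tau$ and the adjoint representation), and by complete reducibility there is an $H$-stable complement $\mathfrak{m}$, i.e.\ $\mathfrak{h}_1 = d\tau(\mathfrak{h}) \oplus \mathfrak{m}$. Forming associated bundles, $\text{ad}(\mathcal{P}_1) = \mathcal{P} \times^H \mathfrak{h}_1 = \text{ad}(\tau)(\text{ad}(\mathcal{P})) \oplus \mathcal{Q}$ with $\mathcal{Q} := \mathcal{P} \times^H \mathfrak{m}$, so the map $\text{ad}(\tau)$ appearing in \eqref{homotoindeq1.1.1} is a split monomorphism of $\mathcal{O}_X$-modules, with a retraction $q : \text{ad}(\mathcal{P}_1) \to \text{ad}(\mathcal{P})$, $\ker q = \mathcal{Q}$.

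Next I would propagate this splitting across the ladder \eqref{homotoindeq1.1.1}. Since its right vertical arrow is the identity, the bottom row is the pushout of the top row along $\text{ad}(\tau)$; because $\text{ad}(\tau)$ is split, a short diagram chase (equivalently, the five lemma applied to the middle columns) identifies $\mathcal{A}t_{\rho}(\mathcal{P}_1)$ with $\mathcal{A}t_{\rho}(\mathcal{P}) \oplus \iota_1(\mathcal{Q})$ as $\mathcal{O}_X$-modules, with $\text{At}_{\rho}(\tau)$ the inclusion of the first summand and $\iota_1(\mathcal{Q}) \subseteq \ker(\text{pr}_2)$. Thus there is an $\mathcal{O}_X$-linear retraction $Q : \mathcal{A}t_{\rho}(\mathcal{P}_1) \to \mathcal{A}t_{\rho}(\mathcal{P})$ satisfying $\text{pr}_2 \circ Q = \text{pr}_2$. (In the language of Remark \ref{Gat_class} this says that the splitness of $\text{ad}(\tau)$ makes $\text{Ext}^1(\mathcal{V}, \text{ad}(\mathcal{P})) \to \text{Ext}^1(\mathcal{V}, \text{ad}(\mathcal{P}_1))$ injective, and it sends $a_{\rho}(\mathcal{P})$ to $a_{\rho}(\mathcal{P}_1)$ by the commutativity of \eqref{homotoindeq1.1.1}, so $a_{\rho}(\mathcal{P}_1) = 0$ forces $a_{\rho}(\mathcal{P}) = 0$.) Given a $G$-connection $\lambda_1 : \mathcal{V} \to \mathcal{A}t_{\rho}(\mathcal{P}_1)$, the composite $\lambda := Q \circ \lambda_1$ then satisfies $\text{pr}_2 \circ \lambda = \text{pr}_2 \circ \lambda_1 = \text{Id}_{\mathcal{V}}$, so it is a $G$-connection on $\mathcal{P}$.

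The integrable case is the delicate point and, I expect, where the real work lies: the retraction $Q$ need not be a homomorphism of Lie-algebra sheaves, since $\mathcal{Q}$ is an $\mathcal{A}t_{\rho}(\mathcal{P})$-submodule of $\iota_1(\text{ad}(\mathcal{P}_1))$ but not a Lie ideal of $\mathcal{A}t_{\rho}(\mathcal{P}_1)$ — fiberwise $[\mathfrak{m},\mathfrak{m}]$ is generally not contained in $\mathfrak{m}$ — so $\lambda = Q \circ \lambda_1$ need not be flat when $\lambda_1$ is. To handle this I would either reduce the existence of a flat $G$-connection on $\mathcal{P}$ to the conditions on $\text{ad}(\mathcal{P})$ and on the character line bundles of $\mathcal{P}$ via Proposition \ref{Thm3.1AB3}, obtaining these from the corresponding properties of $\mathcal{P}_1$; or argue directly by correcting $\lambda$: writing $k := \lambda_1 - \lambda$ (which takes values in $\mathcal{Q}$), the $\mathcal{Q}$-component of the flatness identity for $\lambda_1$ shows that $k$ satisfies a Maurer--Cartan type relation and that the curvature of $\lambda$ is the $\text{ad}(\mathcal{P})$-component of $[k(-),k(-)]$; one would then show this curvature is killed by replacing $\lambda$ with $\lambda + \varphi$ for a suitable $\mathcal{O}_X$-linear $\varphi : \mathcal{V} \to \iota_1(\text{ad}(\mathcal{P}))$. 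Controlling this correction is the main obstacle I anticipate.
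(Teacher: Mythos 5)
For the existence of a $G$-connection (without the flatness assertion) your argument is correct and is essentially the paper's. Both proofs rest on the same input: since $H$ is reductive and $\operatorname{char}k=0$, the $H$-module $\mathfrak{h}_1$ splits as $d\tau(\mathfrak{h})\oplus\mathfrak{m}$, giving an $\mathcal{O}_X$-linear retraction $\bar\gamma:\text{ad}(\mathcal{P}_1)\to\text{ad}(\mathcal{P})$ of $\text{ad}(\tau)$ (the paper imports this from \cite{DKP}). The paper then transports the connection by converting $\lambda_1$ into the associated retraction $\lambda_1'$ of $\iota_1$ and forming $\bar\gamma\circ\lambda_1'\circ\text{At}_{\rho}(\tau)$, a retraction of $\iota$; you instead use the pushout description of the ladder \eqref{homotoindeq1.1} to produce a retraction $Q$ of $\text{At}_{\rho}(\tau)$ with $\text{pr}_2\circ Q=\text{pr}_2$ and set $\lambda=Q\circ\lambda_1$. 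These are the same computation packaged through the two standard ways of splitting a short exact sequence, so for this half there is nothing to choose between the arguments.

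The flat case, however, is a genuine gap in your proposal. You correctly isolate the difficulty --- the retraction $Q$ (equivalently $\bar\gamma$) is built from the projection $\mathfrak{h}_1\to\mathfrak{h}$ along $\mathfrak{m}$, and $[\mathfrak{m},\mathfrak{m}]$ need not lie in $\mathfrak{m}$ --- but neither of your proposed repairs is carried out, and neither obviously works: Proposition \ref{Thm3.1AB3} characterizes the existence of $G$-connections, not of \emph{flat} ones, so it cannot be invoked for integrability; and the Maurer--Cartan correction $\lambda\mapsto\lambda+\varphi$ is left entirely open (you say yourself that controlling it is the obstacle). The paper's proof of this half is different in substance: it asserts that $\bar\gamma$ preserves the Lie algebra structure and that, when $\lambda_1$ is flat, the retraction $\lambda_1'$ can be chosen to be a morphism of Lie algebra sheaves, so that the composite $\bar\gamma\circ\lambda_1'\circ\text{At}_{\rho}(\tau)$ is bracket-preserving and the induced connection is flat. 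In other words, the step you flag as delicate is exactly the step the paper closes by a structural property of $\bar\gamma$; to complete your proof you would need either to verify that property in your setting (i.e.\ that the relevant complement can be arranged compatibly with brackets) or to actually execute the correction argument you sketch. As written, your proposal establishes only the non-flat half of the proposition.
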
 

\begin{proof}
	 We have a map \(\bar \gamma : \text{ad}(\mathcal{P}_1) \rightarrow   \text{ad}(\mathcal{P})\) such that \(\bar \gamma \circ \text{ad}(\tau)= Id_{\text{ad}(\mathcal{P})}\) (see \cite[proof of the Proposition 5.3]{DKP},). Note that a  \(G\)-connection $\lambda_1$ on $\mathcal{P}_1$ induces a splitting 
	\[\lambda_1' : \mathcal{A}t(\mathcal{P}_1) \rightarrow \text{ad}(\mathcal{P}_1) \] 
	of $\iota_1$ in \eqref{homotoindeq1.1}. Then \(\bar \gamma \circ \lambda_1' \circ \text{At}_{\rho}(\tau)\) gives a splitting of $\iota$. Hence, we get a \(G\)-connection on \(\mathcal{P}\). Note that the map \(\bar \gamma\) preserves the Lie algebra structure. If \(G\)-connection $h_1$ on $\mathcal{P}_1$ is flat, we can choose the splitting \(\lambda_1'\) to preserve the Lie algebra structure. Hence, \(\bar \gamma \circ \lambda_1' \circ \text{At}_{\rho}(\tau)\) will give a flat \(G\)-connection on \(\mathcal{P}\).
\end{proof}

As a corollary, we get the following result analogous to a result for holomorphic connection in \cite[Proposition 2.2]{AB03}.

\begin{cor}\label{exiat_ad}
Let \(H\) be a semisimple linear algebraic group and $\mathcal{P}$ be a principal \(H\)-bundle on \(X\). Then $\mathcal{P}$ admits a (flat) \(G\)-connection if and only if  the associated adjoint bundle \(\text{ad}(\mathcal{P})\) admits a (flat) \(G\)-connection.
\end{cor}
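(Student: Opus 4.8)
The plan is to reduce the statement to the two structure-group transfer results already proved, Propositions \ref{inducedconn3.1} and \ref{H1_2H}, applied to the adjoint representation. By definition, $\text{ad}(\mathcal{P})=\mathcal{P}\times_{H}\mathfrak{h}$ admits a (flat) $G$-connection precisely when its frame bundle does, and that frame bundle is the principal $GL(\mathfrak{h})$-bundle $\mathcal{Q}:=\mathcal{P}\times^{H}GL(\mathfrak{h})$ formed via $\text{Ad}\colon H\to GL(\mathfrak{h})$. The one point that needs care is that $\text{Ad}$ need not be injective: its kernel is the centre $Z(H)$, which is finite because $H$ is semisimple. So I would factor $\text{Ad}$ as $H\xrightarrow{\,q\,}H'\xrightarrow{\,j\,}GL(\mathfrak{h})$, where $H'=\text{Ad}(H)\cong H/Z(H)$ is the adjoint group, a closed semisimple subgroup of $GL(\mathfrak{h})$, $q$ is an isogeny, and $j$ a closed immersion. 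Putting $\mathcal{P}':=\mathcal{P}\times^{H}H'$, functoriality of the associated-bundle construction gives $\mathcal{Q}=\mathcal{P}'\times^{H'}GL(\mathfrak{h})$, so it suffices to prove the two equivalences: $\mathcal{P}$ admits a (flat) $G$-connection if and only if $\mathcal{P}'$ does; and $\mathcal{P}'$ admits a (flat) $G$-connection if and only if $\mathcal{P}'\times^{H'}GL(\mathfrak{h})$ does.

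For the second equivalence I would simply invoke the existing propositions: since $j$ is injective, Proposition \ref{inducedconn3.1} (with $\tau=j$) gives the forward implication and Proposition \ref{H1_2H} gives the reverse one, both respecting flatness. For the first equivalence, the point is that $q$ being an isogeny makes $dq\colon\mathfrak{h}\to\text{Lie}(H')$ an isomorphism of Lie algebras; therefore the induced map $\text{ad}(q)\colon\text{ad}(\mathcal{P})\to\text{ad}(\mathcal{P}')$ is an isomorphism of Lie-algebra bundles. Feeding $\tau=q$ into the commutative ladder \eqref{homotoindeq1.1} of $G$-Atiyah sequences, the left vertical arrow $\text{ad}(q)$ and the right vertical arrow $\text{Id}_{\mathcal{V}}$ are isomorphisms, so the five lemma forces the middle arrow $\text{At}_{\rho}(q)\colon\mathcal{A}t_{\rho}(\mathcal{P})\to\mathcal{A}t_{\rho}(\mathcal{P}')$ to be an isomorphism identifying the $G$-Atiyah sequence of $\mathcal{P}$ with that of $\mathcal{P}'$; moreover $\text{At}_{\rho}(q)$ preserves the Lie bracket (as recorded in the proof of Proposition \ref{inducedconn3.1}), and so does its inverse. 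An isomorphism of short exact sequences of this form carries an $\mathcal{O}_X$-linear splitting, respectively a Lie-algebra-preserving splitting, of one sequence to one of the other, which yields the first equivalence. Chaining the two equivalences proves the corollary.

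I expect the only genuine obstacle to be the bookkeeping around the non-injectivity of $\text{Ad}$: without the factorization through the adjoint group one cannot directly appeal to Proposition \ref{H1_2H}, and one must check that replacing $\mathcal{P}$ by $\mathcal{P}'$ changes neither the adjoint bundle nor the $G$-Atiyah sheaf up to canonical isomorphism. If $H$ is of adjoint type this step is vacuous, since then $H'=H$ and $q=\text{id}$. Everything else---functoriality of associated bundles, the five-lemma argument, and the fact that an isomorphism of extensions transports splittings---is routine. I would also note in passing that the forward implication of the corollary, on its own, needs nothing beyond Proposition \ref{inducedconn3.1} applied directly to $\tau=\text{Ad}$; it is only the converse that requires the adjoint-group detour.
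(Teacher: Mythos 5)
Your proof is correct and follows essentially the route the paper intends: the corollary is stated there without proof, as an immediate consequence of Propositions \ref{inducedconn3.1} and \ref{H1_2H} applied to the adjoint representation, together with the identification (from Section \ref{Gconnection on vector bundle}) of a $G$-connection on the vector bundle $\text{ad}(\mathcal{P})$ with one on its frame bundle. Your extra care with the non-injectivity of $\mathrm{Ad}$ --- factoring through the adjoint group $H/Z(H)$ and using that the isogeny induces an isomorphism of adjoint bundles, hence by the five lemma an isomorphism of $G$-Atiyah sequences transporting (flat) splittings --- is precisely the point the paper glosses over, and you handle it correctly.
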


We now recall the notion of Levi subgroup. Let \(H\) be a connected reductive affine algebraic group over \(k\). Let \(P\) be  a parabolic subgroup of \(H\) and \(R_u(P)\) be the unipotent radical of \(P\). There is a connected closed subgroup \(L \subset P\), called the Levi subgroup, such the restriction to \(L\) of the quotient map
\begin{equation*}
	\pi : P \longrightarrow P / R_u(P)
\end{equation*}
is an isomorphism. Note that \(L\) is also reductive (see \cite[Section 30.2]{Hump}, \cite[Page 559]{AGmil}). Given a principal \(H\)-bundle $\mathcal{P}$ on a projective variety \(X\) over \(k\) there is a natural reduction of structure group $\mathcal{P}_L \subset \mathcal{P}$ to a certain Levi subgroup \(L \subset H\) associated to $\mathcal{P}$ (see \cite{BBN05}). As a consequence of Proposition \ref{H2H1} and Proposition \ref{H1_2H}, we get the following.

\begin{cor}\label{GconnLeviRed}
When \(X\) is projective, with the notation as above, to study existence of (flat) \(G\)-connection on $\mathcal{P}$ it suffices to check existence of (flat) \(G\)-connection on a Levi reduction.
\end{cor}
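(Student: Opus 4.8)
The plan is to reduce the statement to the two propositions on induced $G$-connections that were just proved, using the natural Levi reduction $\mathcal{P}_L \subset \mathcal{P}$ as the bridge. Concretely, the claim is that $\mathcal{P}$ admits a (flat) $G$-connection if and only if $\mathcal{P}_L$ does, where $L \subset H$ is the Levi subgroup associated to $\mathcal{P}$ via the construction of \cite{BBN05}. The key structural fact I would invoke is that $\mathcal{P}$ is recovered from $\mathcal{P}_L$ by extension of structure group along the inclusion $\tau : L \hookrightarrow H$, i.e. $\mathcal{P} \cong \mathcal{P}_L \times^L H$ as principal $H$-bundles.

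First, I would fix notation: let $L \subset H$ be the Levi subgroup and $\tau : L \hookrightarrow H$ the inclusion, which is an injective homomorphism of reductive algebraic groups (recall $L$ is reductive), so $\tau$ is of the type required by Proposition \ref{inducedconn3.1} and Proposition \ref{H1_2H}. Second, for the ``only if'' direction: given a (flat) $G$-connection on $\mathcal{P}_L$, apply Proposition \ref{inducedconn3.1} with the homomorphism $\tau : L \to H$ and the principal $L$-bundle $\mathcal{P}_L$; since $\mathcal{P}_L \times^L H \cong \mathcal{P}$, this yields a (flat) $G$-connection on $\mathcal{P}$. Third, for the ``if'' direction: given a (flat) $G$-connection on $\mathcal{P} \cong \mathcal{P}_L \times^L H$, apply Proposition \ref{H1_2H}, whose hypothesis is exactly that $\tau$ is an injective homomorphism of reductive groups, to obtain a (flat) $G$-connection on $\mathcal{P}_L$. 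This gives the biconditional, and I would record it in one or two lines.

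The main point requiring care — and the only real obstacle — is justifying the identification $\mathcal{P} \cong \mathcal{P}_L \times^L H$ and checking that it is compatible with the $G$-Atiyah sequences, so that the morphisms $\text{At}_\rho(\tau)$ and $\bar\gamma$ of the two propositions can actually be invoked. This is where projectivity of $X$ enters, since the construction of the canonical Levi reduction in \cite{BBN05} is carried out for principal bundles over projective varieties; I would cite \cite{BBN05} for the existence of $\mathcal{P}_L$ together with the canonical isomorphism $\mathcal{P}_L \times^L H \cong \mathcal{P}$, and note that no further hypotheses beyond those already in force are needed. Everything else is a direct quotation of Propositions \ref{inducedconn3.1} and \ref{H1_2H}, including the parenthetical ``flat'' statements, which those propositions already handle verbatim.

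Thus the proof is essentially: unwind the definition of $\mathcal{P}_L$, observe $\mathcal{P} = \mathcal{P}_L \times^L H$ with $\tau : L \hookrightarrow H$ injective and both groups reductive, and then apply Proposition \ref{inducedconn3.1} in one direction and Proposition \ref{H1_2H} in the other. I do not expect any computation; the content is the correct bookkeeping of structure groups and the appeal to the cited existence result for the Levi reduction.
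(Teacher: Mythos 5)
Your proposal is correct and is essentially the argument the paper intends: the corollary is stated as an immediate consequence of the two propositions on induced connections, applied to the injective homomorphism $L \hookrightarrow H$ of reductive groups together with the identification $\mathcal{P} \cong \mathcal{P}_L \times^L H$ from the canonical Levi reduction of \cite{BBN05} (which is where projectivity is used). Note that the paper's text attributes the corollary to Proposition \ref{H2H1} and Proposition \ref{H1_2H}, but the first reference appears to be a slip for Proposition \ref{inducedconn3.1}, which is the one you correctly invoke.
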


We now extend Corollary \ref{exiat_ad} for any reductive group \(H\) (cf. \cite[Section 3]{AB03}). Let \(Z^0(H) \subset H\) denote the connected component of the center of \(H\) containing the identity element. Let \(H':=H /Z^0(H) \) be the quotient. Note that \(H'\) is semisimple. Let \([H \, , \, H]\) denote the commutator and \(A:=H / [H \, , \, H]\) be the abelian quotient. For a principal \(H\)-bundle $\mathcal{P}$, let $\mathcal{P}_1$ (respectively, $\mathcal{P}_2$) denote the principal \(H'\)-bundle (respectively, principal \(A\)-bundle) obtained by extending the structure group of $\mathcal{P}$ under the obvious projection map of \(H\) to \(H'\) (respectively, \(A\)). Then we have the following (cf. \cite[Lemma 3.4]{AB02}).

\begin{prop}\label{exist_proj}
	The principal \(H\)-bundle $\mathcal{P}$ admits a \(G\)-connection if and only if both the bundles $\mathcal{P}_1$ and $\mathcal{P}_2$ admit \(G\)-connection.
\end{prop}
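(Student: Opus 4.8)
The plan is to reduce the statement to the case of the adjoint bundle (Corollary~\ref{exiat_ad}) together with a direct analysis of characters, mimicking the structure of \cite[Lemma 3.4]{AB02}. First I would record the two implications separately. The ``only if'' direction is the easy one: if $\mathcal{P}$ admits a $G$-connection, then so does every bundle obtained by extension of structure group, by Proposition~\ref{inducedconn3.1}; applying this to the projections $H \to H'$ and $H \to A$ shows that $\mathcal{P}_1$ and $\mathcal{P}_2$ both admit $G$-connections. (The same argument preserves flatness, so the parenthetical ``flat'' version comes for free.)

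For the ``if'' direction, suppose $\mathcal{P}_1$ and $\mathcal{P}_2$ admit $G$-connections. The key observation is that the adjoint bundle is insensitive to the center: the quotient map $H \to H' = H/Z^0(H)$ induces an isomorphism of Lie algebras $\mathfrak{h} \cong \mathfrak{h}'$ compatible with the adjoint actions (since $Z^0(H)$ acts trivially under $\mathrm{Ad}$), hence a canonical isomorphism $\mathrm{ad}(\mathcal{P}) \cong \mathrm{ad}(\mathcal{P}_1)$. Since $H'$ is semisimple, Corollary~\ref{exiat_ad} applies to $\mathcal{P}_1$: a $G$-connection on $\mathcal{P}_1$ is equivalent to one on $\mathrm{ad}(\mathcal{P}_1) \cong \mathrm{ad}(\mathcal{P})$. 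Next I would invoke the first theorem quoted in the introduction (Proposition~\ref{Thm3.1AB3}): $\mathcal{P}$ admits a $G$-connection if and only if $\mathrm{ad}(\mathcal{P})$ does \emph{and} every associated line bundle $\mathcal{P}\times^{\chi} k$, for $\chi$ a character of $H$, does. We have just produced a $G$-connection on $\mathrm{ad}(\mathcal{P})$, so it remains to handle the characters.

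The character step is where the bundle $\mathcal{P}_2$ enters. Every character $\chi$ of $H$ is trivial on $[H,H]$ (characters land in the abelian group $k^\times$), hence factors through $A = H/[H,H]$ as $\chi = \bar\chi \circ q$ for a character $\bar\chi$ of $A$, where $q : H \to A$ is the projection. Consequently the associated line bundle satisfies $\mathcal{P}\times^{\chi} k \cong \mathcal{P}_2 \times^{\bar\chi} k$. Since $\mathcal{P}_2$ admits a $G$-connection, Proposition~\ref{inducedconn3.1} (extension of structure group along $\bar\chi : A \to \mathrm{GL}(1,k)$) gives a $G$-connection on $\mathcal{P}_2\times^{\bar\chi} k \cong \mathcal{P}\times^{\chi} k$. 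Thus both conditions of Proposition~\ref{Thm3.1AB3} are met, and $\mathcal{P}$ admits a $G$-connection. For the flat version, one checks that all the identifications used ($\mathrm{ad}(\mathcal{P})\cong\mathrm{ad}(\mathcal{P}_1)$, $\mathcal{P}\times^\chi k \cong \mathcal{P}_2\times^{\bar\chi} k$, and the induced-connection maps) preserve the Lie algebra structures on the Atiyah sheaves, so flat connections map to flat connections throughout.

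I expect the main obstacle to be purely bookkeeping rather than conceptual: verifying that the isomorphism $\mathrm{ad}(\mathcal{P}) \cong \mathrm{ad}(\mathcal{P}_1)$ is genuinely an isomorphism of Atiyah sheaves (so that $G$-connections correspond, and flatness is preserved), and checking the compatibility of the character factorization $\chi = \bar\chi\circ q$ with the identification of line bundles at the level of $G$-Atiyah sequences. One should also make sure Proposition~\ref{Thm3.1AB3} is available in the generality needed here; if its proof in the paper already uses Proposition~\ref{exist_proj}, the argument must instead be run directly, decomposing $\mathfrak{h} = \mathfrak{z} \oplus [\mathfrak{h},\mathfrak{h}]$ as an $H$-module and splitting the $G$-Atiyah class of $\mathcal{P}$ accordingly into the piece seen by $\mathcal{P}_1$ and the piece seen by $\mathcal{P}_2$ — but the route through the quoted theorems is cleaner if the logical dependencies permit it.
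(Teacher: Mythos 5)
Your ``only if'' direction is fine and agrees with the paper (extension of structure group along $H\to H'$ and $H\to A$ via Proposition~\ref{inducedconn3.1}). The ``if'' direction, however, has a genuine gap: the route through Proposition~\ref{Thm3.1AB3} is circular in this paper, because that proposition is proved \emph{after} Proposition~\ref{exist_proj} and its proof invokes Proposition~\ref{exist_proj} and Remark~\ref{exist_proj_rmk} at several points, including its final line. You flagged this risk yourself, but the fallback you offer --- ``decompose $\mathfrak{h}=\mathfrak{z}\oplus[\mathfrak{h},\mathfrak{h}]$ and split the $G$-Atiyah class accordingly'' --- is precisely the substance of the converse and is left as a one-sentence sketch. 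What actually has to be shown is that under the isomorphism of adjoint bundles induced by the diagonal $\Delta:H\to H'\times A$ (whose differential $\mathfrak{h}\to\mathfrak{h}'\oplus\mathfrak{a}$ is an isomorphism of $H$-modules, $\ker\Delta$ being finite), the class $a_\rho(\mathcal{P})\in H^1(X,\mathrm{ad}(\mathcal{P})\otimes_k\mathfrak{g}^{\vee})$ is carried to the pair $\left(a_\rho(\mathcal{P}_1),a_\rho(\mathcal{P}_2)\right)$; only then does the vanishing of the latter two force the vanishing of the former. The paper establishes exactly this by showing that $\widetilde{\Delta}:\mathcal{P}\to\mathcal{P}_1\times_X\mathcal{P}_2$ is \'etale, deducing that the Atiyah (hence $G$-Atiyah) sequences of $\mathcal{P}$ and of the fiber product coincide, and then splitting the $G$-Atiyah sequence of the fiber product from splittings of the two factors. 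None of this appears in your proposal, and without it the converse is not proved.

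A secondary error: the claimed ``canonical isomorphism $\mathrm{ad}(\mathcal{P})\cong\mathrm{ad}(\mathcal{P}_1)$'' is false whenever $Z^0(H)$ is positive-dimensional. The quotient $H\to H'$ kills $\mathfrak{z}=\mathrm{Lie}(Z^0(H))$, so $\mathfrak{h}'\cong[\mathfrak{h},\mathfrak{h}]$ and one only gets $\mathrm{ad}(\mathcal{P})\cong\mathrm{ad}(\mathcal{P}_1)\oplus(\mathcal{O}_X\otimes_k\mathfrak{z})$, with a trivial complementary summand; this is harmless for the existence question but the identification you rely on is not an isomorphism. Your character analysis (every $\chi$ factors through $A$ and $\mathcal{P}\times^{\chi}k\cong\mathcal{P}_2\times^{\bar{\chi}}k$) is correct, but in the paper it is used to deduce Proposition~\ref{Thm3.1AB3} \emph{from} the present proposition, not the other way around.
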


\begin{proof}
	The forward direction is a consequence of Proposition \ref{H2H1}. For the converse direction, consider the diagonal homomorphism $\Delta : H \rightarrow H' \times A$ induced by the projections from \(H\) to \(H'\) and \(A\). Note that $\Delta$ is a surjective map between linear algebraic group with \(\text{ker}(\Delta)=Z^0(H) \cap [H \, , \, H]\), which is finite (see \cite[Lemma 19.5]{Hump}). Hence, $\Delta$ is an \'etale morphism. Let \(\mathcal{P}(\Delta):= \mathcal{P} \times^H (H' \times A) \) denote the principal \((H' \times A)\)-bundle obtained by extending the structure group of $\mathcal{P}$ by the homomorphism $\Delta$. Now consider the map
	\begin{equation*}
		 \mathcal{P} \longrightarrow \mathcal{P}(\Delta), \text{ given by } z \longmapsto [z,(1,1)].
	\end{equation*}
This is an \'etale morphism as \'etale locally, the above map is of the form \(Id \times \Delta\). Note that there is an isomorphism of principal bundles
	\begin{equation*}
\begin{split}
			\mathcal{P}(\Delta) & \stackrel{\cong}\longrightarrow \mathcal{P}_1 \times_X \mathcal{P}_2, \text{ given by }\\
			 [z, (h',a)] & \longmapsto [(z, h'), (z, a)].
\end{split}
	\end{equation*}
The inverse map is given by
\begin{equation*}
	[(z, h'),(z h, a)] \longmapsto [z,(h',  [h]^{-1} a)],
\end{equation*}
where \([h]\) denote the image of \(h\) in \(A\). Hence, the composite map \(\widetilde{\Delta} : \mathcal{P} \rightarrow \mathcal{P}_1 \times_X \mathcal{P}_2\) is an \'etale morphism. As a consequence, the Atiyah sequence for $\mathcal{P}$ and  \(\mathcal{P}':=\mathcal{P}_1 \times_X \mathcal{P}_2\) coincide (see \cite[Lemma 1.12]{BMW12}). Note that \'etaleness of the map \(\widetilde{\Delta} \) implies that $d\widetilde{\Delta} : \mathscr{T}_{\mathcal{P}} \stackrel{\cong} \rightarrow \widetilde{\Delta}^* (\mathscr{T}_{\mathcal{P}'}) $ is an isomorphism (using \cite[Ch. I, Proposition 3.5]{Milneetco} and \cite[Theorem C.15]{Sernesi}). Thus, the induced map \(\text{At}(\widetilde{\Delta})\) between the corresponding Atiyah sheaves is also isomorphism. Thus, we get the following commutative diagram of the corresponding Atiyah sequences

\begin{equation*}
	\xymatrixrowsep{1.8pc} \xymatrixcolsep{2.2pc}
	\xymatrix{
		0\ar@{->}[r]&
		\text{ad}(\mathcal{P})\ar@{->}[r]^{\iota}\ar@{->}[d]^{\text{ad}(\widetilde{\Delta})}& \mathcal{A}t(\mathcal{P})\ar@{->}[d]^{\text{At}(\widetilde{\Delta})}\ar@{->}[r]^{\eta}&\mathscr{T}_X\ar@{->}[d]_{Id}\ar@{->}[d]\ar@{->}[r]&0\\
		0\ar@{->}[r]&
		\text{ad}(\mathcal{P}')\ar@{->}[r]^{\iota'}&\mathcal{A}t(\mathcal{P}')\ar@{->}[r]^{\eta'}&\mathscr{T}_X\ar@{->}[r]&{0}.
	}
\end{equation*} 
This forces that the map \(\text{ad}(\widetilde{\Delta})\) is also an isomorphism. This shows that the Atiyah sequence for $\mathcal{P}$ and  \(\mathcal{P}':=\mathcal{P}_1 \times_X \mathcal{P}_2\) coincide. Hence, their \(G\)-Atiyah sequences also coincide. It remains to show that if the \(G\)-Atiyah sequences corresponding to $\mathcal{P}_1$ and $\mathcal{P}_2$ split, then the \(G\)-Atiyah sequence for $\mathcal{P}'$ also splits. We have the following fiber product diagram
\begin{equation*}
	\begin{tikzpicture}[description/.style={fill=white,inner sep=2pt}]
		\matrix (m) [matrix of math nodes, row sep=3em,
		column sep=2.5em, text height=1.5ex, text depth=0.25ex]
		{ \mathcal{P}':=\mathcal{P}_1 \times_X \mathcal{P}_2    & &   \mathcal{P}_1  \\
			\mathcal{P}_2    & &   X  \\ };
		\path[->]  (m-1-1) edge node[auto] {}(m-1-3);
		\path[ ->] (m-2-1) edge node[below] {}(m-2-3);
		\path[->] (m-1-1) edge node[auto] {}(m-2-1);
		\path[->] (m-1-1) edge node[above] {}(m-1-3);
		\path[ ->] (m-1-3) edge node[auto] {$p_1$} (m-2-3);
		\path[->] (m-2-1) edge node[above] {$p_2$} (m-2-3);
	\end{tikzpicture}	.
\end{equation*}
Let the Atiyah sequence for $\mathcal{P}_i$, \(i=1, 2\) be the following
\begin{equation*}
	0 \longrightarrow \text{ad}(\mathcal{P}_i)  \stackrel{\iota_i}\longrightarrow \mathcal{A}t(\mathcal{P}_i) 	\stackrel{\eta_i}\longrightarrow  \mathscr{T}_X  \longrightarrow 0.
\end{equation*}
And, denote the \(G\)-Atiyah sequence for $\mathcal{P}_i$, \(i=1, 2\) by
\begin{equation}\label{factorGAt}
0 \longrightarrow \text{ad}(\mathcal{P}_i) \stackrel{\iota^0_i} \longrightarrow  \mathcal{A}t_{\rho}(\mathcal{P}_i) \stackrel{pr_2} \longrightarrow \mathcal{V} \longrightarrow 0.
\end{equation}
Hence, by the functoriality of the Atiyah sequence (see \cite[Page 188]{At57}, \cite{H23}), we get that
\begin{equation*}
		0 \longrightarrow \text{ad}(\mathcal{P}_1) \oplus  \text{ad}(\mathcal{P}_2) \stackrel{\iota'}\longrightarrow \mathcal{A}t(\mathcal{P'}) 	\stackrel{\eta'=(\eta_1, \eta_2)}\longrightarrow  \mathscr{T}_X  \longrightarrow 0
\end{equation*}
is the Atiyah sequence for $\mathcal{P'}$, where
\begin{equation*}
	\mathcal{A}t(\mathcal{P'}) =\text{Ker}(\eta_1 \circ a_1 - \eta_2 \circ a_2) \subset \mathcal{A}t(\mathcal{P}_1) \oplus \mathcal{A}t(\mathcal{P}_2).
\end{equation*}
Here the map \(a_i\) denote the projection of \(\mathcal{A}t(\mathcal{P}_1) \oplus \mathcal{A}t(\mathcal{P}_2)\) to the \(i^{\text{th}}\) factor for \(i=1, 2\). Thus, the \(G\)-Atiyah sequence for the bundle $\mathcal{P}'$ is given by
\begin{equation}\label{GAtP'}
	0 \longrightarrow \text{ad}(\mathcal{P}_1) \oplus  \text{ad}(\mathcal{P}_2) \stackrel{\iota}\longrightarrow \mathcal{A}t_{\rho}(\mathcal{P'}) 	\stackrel{\text{pr}_{\mathcal{V}}}\longrightarrow  \mathcal V   \longrightarrow 0,
\end{equation}
where 
\begin{equation*}
	\mathcal{A}t_{\rho}(\mathcal{P'}) =\{(\delta_1, \delta_2, \delta) : (\eta_1 \circ a_1)(\delta_1)=(\eta_2 \circ a_2)(\delta_2)=\zeta(\delta) \} \subset \mathcal{A}t(\mathcal{P}_1) \oplus \mathcal{A}t(\mathcal{P}_2)\oplus \mathcal{V}.  
\end{equation*}
We are given that $\mathcal{P}_i$ admits a \(G\)-connection, i.e., a splitting
\begin{equation*}
	\lambda_i : \mathcal{V} \longrightarrow \mathcal{A}t_{\rho}(\mathcal{P}_i) 
\end{equation*}
of \eqref{factorGAt}. This will induce a splitting of the sequence \eqref{GAtP'}. Hence, $\mathcal{P'}$ admits a \(G\)-connection. This completes the proof.
\end{proof}

\begin{rmk}\label{exist_proj_rmk}
	As a corollary of the proof we get that the principal bundles $\mathcal{P}_1$ and $\mathcal{P}_2$ admit \(G\)-connection if and only if the fiber product $\mathcal{P}_1 \times_X \mathcal{P}_2$ admits \(G\)-connection.
\end{rmk}

Now we give a necessary and sufficient criterion for a principal \(H\)-bundle to admit \(G\)-connection (cf. \cite[Theorem 3.1]{AB03}).

\begin{prop}\label{Thm3.1AB3}
	Let \(H\) be a reductive linear algebraic group. With the notation as above, a principal \(H\)-bundle $\mathcal{P}$ admits a \(G\)-connection if and only if the following conditions hold:
	\begin{enumerate}
		\item the adjoint bundle $\text{ad}(\mathcal{P})$ admits a \(G\)-connection,
		\item for every character $\chi$ of \(H\), the line bundle $\mathcal{P} \times^{\chi} k$ associated to $\mathcal{P}$ admits a \(G\)-connection. 
	\end{enumerate}
\end{prop}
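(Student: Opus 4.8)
The plan is to reduce the statement for a general reductive $H$ to the semisimple case (Corollary \ref{exiat_ad}) and the abelian case, using the splitting of $H$ into $H' = H/Z^0(H)$ and $A = H/[H,H]$ already exploited in Proposition \ref{exist_proj} and Remark \ref{exist_proj_rmk}. First I would invoke Proposition \ref{exist_proj}: $\mathcal{P}$ admits a $G$-connection if and only if both $\mathcal{P}_1$ (the $H'$-bundle) and $\mathcal{P}_2$ (the $A$-bundle) do. Since $H'$ is semisimple, Corollary \ref{exiat_ad} tells us $\mathcal{P}_1$ admits a $G$-connection if and only if $\mathrm{ad}(\mathcal{P}_1)$ does. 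The remaining work is then purely a matter of matching up (i) $\mathrm{ad}(\mathcal{P}_1)$ with $\mathrm{ad}(\mathcal{P})$, and (ii) the $A$-bundle $\mathcal{P}_2$ with the collection of character line bundles $\mathcal{P} \times^\chi k$.

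For step (i): the projection $H \to H'$ has kernel $Z^0(H)$, which is central, so the induced map $\mathfrak{h} \to \mathfrak{h}'$ of Lie algebras is the quotient by the central subalgebra $\mathfrak{z}^0(H)$, and since $H$ is reductive $\mathfrak{h} = \mathfrak{z}^0(H) \oplus [\mathfrak{h},\mathfrak{h}]$ with $\mathfrak{h}' \cong [\mathfrak{h},\mathfrak{h}]$ as $H$-modules (the adjoint action factors through $H'$). Taking associated bundles gives $\mathrm{ad}(\mathcal{P}) \cong \mathrm{ad}(\mathcal{P}_1) \oplus (\mathcal{P} \times^H \mathfrak{z}^0(H))$, and since $Z^0(H)$ acts trivially on its own Lie algebra, the second summand is a trivial vector bundle $\mathcal{O}_X \otimes_k \mathfrak{z}^0(H)$, which always admits a $G$-connection (Remark \ref{Gat_class}, or directly: pull back the trivial splitting). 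By the direct-sum functoriality of the $G$-Atiyah sequence (as used in Proposition \ref{exist_proj} via Remark \ref{exist_proj_rmk}), $\mathrm{ad}(\mathcal{P})$ admits a $G$-connection if and only if $\mathrm{ad}(\mathcal{P}_1)$ does. Combined with the previous paragraph, condition (1) of the proposition is equivalent to "$\mathcal{P}_1$ admits a $G$-connection."

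For step (ii): $A$ is abelian, hence a product of a torus and a finite group, so up to isogeny $A \cong (k^\times)^n$; the characters of $A$ pull back to characters of $H$, and conversely every character of $H$ factors through $A = H/[H,H]$. A principal bundle for a torus is the fiber product of the line bundles determined by a basis of characters, so $\mathcal{P}_2$ admits a $G$-connection if and only if each associated line bundle $\mathcal{P}_2 \times^\chi k = \mathcal{P} \times^\chi k$ does (using Remark \ref{exist_proj_rmk} again for the fiber product, and Proposition \ref{H1_2H}/\ref{inducedconn3.1} to pass between $\mathcal{P}_2$ and the finitely many character line bundles via the isogeny, whose kernel being finite makes it étale so the Atiyah sequences agree). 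This shows condition (2) is equivalent to "$\mathcal{P}_2$ admits a $G$-connection." Putting (i) and (ii) together with Proposition \ref{exist_proj} finishes the proof.

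I expect the main obstacle to be the bookkeeping in step (ii): one must be careful that "for every character $\chi$" can be replaced by "for a finite generating set of characters" (needed so that the fiber-product argument of Remark \ref{exist_proj_rmk} applies), and that the finite isogeny $A \to (k^\times)^n$ does not obstruct anything — this is handled because finite-kernel homomorphisms of linear algebraic groups in characteristic zero are étale, so by the argument already given in the proof of Proposition \ref{exist_proj} (citing \cite{BMW12}, \cite{Milneetco}) the Atiyah and $G$-Atiyah sequences of a bundle and its extension along such an isogeny coincide. The semisimple input (Corollary \ref{exiat_ad}) and the direct-sum functoriality are the only non-formal ingredients, and both are already available.
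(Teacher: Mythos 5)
Your proposal follows essentially the same route as the paper: reduce via Proposition \ref{exist_proj} to the semisimple quotient $\mathcal{P}_1$ and the abelian quotient $\mathcal{P}_2$, handle $\mathcal{P}_1$ with Corollary \ref{exiat_ad} together with the decomposition $\text{ad}(\mathcal{P})\cong\text{ad}(\mathcal{P}_1)\oplus\text{ad}(\mathcal{P}_2)$ (the second summand being trivial), and identify $\mathcal{P}_2$ with the fiber product of the character line bundles using Remark \ref{exist_proj_rmk}. The argument is correct and matches the paper's proof in all essentials.
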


\begin{proof}
As \(H'\) is semisimple, Corollary \ref{exiat_ad} implies that $\mathcal{P}_1$ admits a \(G\)-connection if and only if \(\text{ad}(\mathcal{P}_1)\) admits a \(G\)-connection.	Note that we have the following direct sum decomposition
	\begin{equation*}
		\text{ad}(\mathcal{P}) \cong \text{ad}(\mathcal{P}_1) \oplus \text{ad}(\mathcal{P}_2)
	\end{equation*}
from the proof of Proposition \ref{exist_proj}. Also, note that the adjoint bundle \(\text{ad}(\mathcal{P}_2)\) is trivial as \(A\) is abelian.  Hence, it always admits a connection which implies it admits a \(G\)-connection also. Thus, $\mathcal{P}_1$ admits a \(G\)-connection if and only if \(\text{ad}(\mathcal{P})\) admits a \(G\)-connection by Remark \ref{exist_proj_rmk}. Next we determine the criterion for existence of \(G\)-connection on the associated \(A\)-bundle $\mathcal{P}_2$. Note that the abelian quotient \(A:=H / [H \, , \, H]\) is a product of copies of \(k^*\), say \(A \cong (k^*)^n\). Then the principal \(A\)-bundle $\mathcal{P}_2$ becomes a direct sum of \(n\) line bundles, say \(L_1 \oplus \ldots \oplus L_n\) (as \((k^*)^n\) acts faithfully on \(k^n\)). Hence,  by Remark \ref{exist_proj_rmk} $\mathcal{P}_2$ admits a \(G\)-connection if and only if each of the line bundles \(L_i\) admits so. Consider the character 
\begin{equation*}
	\chi_i :(k^*)^n \rightarrow k^* \text{ given by } (t_1, \ldots, t_n) \mapsto t_i.
\end{equation*}
Then, we have the isomorphism of line bundles 
\begin{equation*}
	L_i \cong \mathcal{P}_2 \times^{\chi_i} k.
\end{equation*}
Hence, the principal \(A\)-bundle $\mathcal{P}$ admits a \(G\)-connection if and only if the associated line bundles \(\mathcal{P}_2 \times^{\chi} k\) admits \(G\)-connection for all characters $\chi$ of \(A\). Note that since any character $\chi$ of \(H\) factors through the abelian quotient \(A\), say $\chi'$
\begin{equation*}
	\mathcal{P}_2 \times^{\chi'} k \cong \mathcal{P} \times^H A \times^{\chi'} k  \cong \mathcal{P} \times^{\chi} k.
\end{equation*}
Then we are done by Proposition \ref{exist_proj}.
\end{proof}

\subsection{Infinitesimal deformation map and connection}\label{infi_def}

Let \(X\) be a complete variety and $\mathcal{P}$ be a principal \(H\)-bundle on \(X\). Let $\rho : G \times X \rightarrow X$ be an action of a connected \text{algebraic group} \(G\) on \(X\). 

Let \(\widetilde{\mathcal{P}}:=\rho^* \mathcal{P} \rightarrow G \times X\) be the pull back of the principal \(H\)-bundle. Note that there is a canonical isomorphism 
\begin{equation*}
	\widetilde{\mathcal{P}}|_{\{1_G \} \times X} \cong P
\end{equation*}
as principal bundles over \(X\). Considering $\widetilde{\mathcal{P}}$ as a family of principal \(H\)-bundles on \(X\), we have the infinitesimal deformation map
\begin{equation}\label{def_map}
	\kappa_1 : \mathfrak{g} \longrightarrow H^1(X, \text{ad}(\mathcal{P})),
\end{equation}
which arises as follows: 

Consider the Atiyah sequence for the pull back principal \(H\)-bundle $\widetilde{\mathcal{P}}$ (see \eqref{Atseq})
\begin{equation}\label{infid_eq1}
	0 \longrightarrow \text{ad}(\widetilde{\mathcal{P}})  \stackrel{\widetilde{\iota}_0}\longrightarrow \mathcal{A}t(\widetilde{\mathcal{P}}) 	\stackrel{\widetilde{\eta}}\longrightarrow  \mathscr{T}_{G \times X}  \longrightarrow 0.
\end{equation}
Let \(p_1 : G \times X \rightarrow G\) and \(p_2 : G \times X \rightarrow X\) denote the corresponding projection maps, respectively. Then we have \(\mathscr{T}_{G \times X} \cong p_1^* \, \mathscr{T}_G \oplus p_2^* \, \mathscr{T}_X\). Hence, pulling back the exact sequence \eqref{infid_eq1} by the inclusion \(j:  p_1^* \, \mathscr{T}_G \hookrightarrow \mathscr{T}_{G \times X}\) we get the following commutating diagram with exact rows:
	\begin{equation}\label{infid_eq2}
	\xymatrixrowsep{1.8pc} \xymatrixcolsep{4pc}
	\xymatrix{
		0\ar@{->}[r]&
		\text{ad}(\widetilde{\mathcal{P}})\ar@{->}[r]^{\iota'_0}\ar@{->}[d]^{\text{Id}}& \mathcal{A}t(\widetilde{\mathcal{P}}) \times_{\mathscr{T}_{G \times X}} p_1^* \, \mathscr{T}_G
		\ar@{->}[d]^{}\ar@{->}[r]^{\eta'}&p_1^* \, \mathscr{T}_G \ar@{->}[d]_{j}\ar@{->}[d]\ar@{->}[r]&0\\
		0\ar@{->}[r]&
		\text{ad}(\widetilde{\mathcal{P}})\ar@{->}[r]^{\widetilde{\iota}_0}&\mathcal{A}t(\widetilde{\mathcal{P}})\ar@{->}[r]^{\widetilde{\eta}}&\mathscr{T}_{G \times X}\ar@{->}[r]&{0}.
	}
\end{equation}
We denote by \(\mathcal{A}t_j(\widetilde{\mathcal{P}}):=\mathcal{A}t(\widetilde{\mathcal{P}}) \times_{\mathscr{T}_{G \times X}} p_1^* \, \mathscr{T}_G\) for the notational convenience. Restricting the short exact sequence of the top row to \(\{1_G\} \times X\) and taking the connecting homomorphism we get the map $\kappa_1$ in \eqref{def_map} (see \cite[Section 3]{kahler_str}).
\begin{lemma}\label{def_map_lm}
	The connecting homomorphism 
	\begin{equation*}
		\kappa_2 : \mathfrak{g}=H^0(X, \mathcal{V}) \longrightarrow H^1(X, \text{ad}(\mathcal{P}))
	\end{equation*}
arising from the \(G\)-Atiyah sequence \eqref{G-Atseq} coincides with the homomorphism $\kappa_1$ defined in \eqref{def_map}. 
\end{lemma}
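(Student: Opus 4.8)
The plan is to show that both connecting homomorphisms $\kappa_1$ and $\kappa_2$ arise from the \emph{same} short exact sequence of sheaves on $X$, after identifying the relevant objects. First I would observe that the $G$-Atiyah sequence \eqref{G-Atseq} is, by construction, the pullback of the Atiyah sequence \eqref{Atseq} along the map $\zeta : \mathcal{V} \to \mathscr{T}_X$; equivalently, $\mathcal{A}t_\rho(\mathcal{P}) = \mathcal{A}t(\mathcal{P}) \times_{\mathscr{T}_X} \mathcal{V}$. On the other hand, $\kappa_1$ is obtained from the top row of \eqref{infid_eq2}, namely from $\mathcal{A}t_j(\widetilde{\mathcal{P}}) = \mathcal{A}t(\widetilde{\mathcal{P}}) \times_{\mathscr{T}_{G\times X}} p_1^*\mathscr{T}_G$, restricted to $\{1_G\}\times X$ and then taking the connecting map in cohomology. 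So the core of the argument is to compare these two pullback constructions after restriction to the slice $\{1_G\}\times X \cong X$.

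The key step is the identification, as short exact sequences of $\mathcal{O}_X$-modules,
\begin{equation*}
	\left( 0 \to \mathrm{ad}(\widetilde{\mathcal{P}}) \to \mathcal{A}t_j(\widetilde{\mathcal{P}}) \to p_1^*\mathscr{T}_G \to 0 \right)\Big|_{\{1_G\}\times X} \;\cong\; \left( 0 \to \mathrm{ad}(\mathcal{P}) \to \mathcal{A}t_\rho(\mathcal{P}) \to \mathcal{V} \to 0 \right).
\end{equation*}
For the outer terms this is routine: $\widetilde{\mathcal{P}}|_{\{1_G\}\times X} \cong \mathcal{P}$ gives $\mathrm{ad}(\widetilde{\mathcal{P}})|_{\{1_G\}\times X}\cong \mathrm{ad}(\mathcal{P})$, and $p_1^*\mathscr{T}_G$ restricted to the slice is $\mathcal{O}_X\otimes_k\mathfrak{g} = \mathcal{V}$ since $\mathscr{T}_G$ is trivialized by left-invariant vector fields (the paper already uses $p_1^*\mathscr{T}_G \cong \mathcal{O}_{G\times X}\otimes_k\mathfrak{g}$, citing \cite{BrStr}). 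For the middle term, I would use precisely the content of Remark \ref{DG-Lieq}: pulling back the composite $p_1^*\mathscr{T}_G \xrightarrow{j} \mathscr{T}_{G\times X} \xrightarrow{d\rho} \rho^*\mathscr{T}_X$ along $\nu : X \hookrightarrow G\times X$, $x\mapsto(1_G,x)$, recovers exactly the map $\zeta : \mathcal{V}\to\mathscr{T}_X$, because $\rho\circ\nu = \mathrm{Id}_X$. Since $\mathcal{A}t(\widetilde{\mathcal{P}})\big|_{\{1_G\}\times X}\cong \mathcal{A}t(\mathcal{P})$ (the Atiyah sheaf commutes with pullback along $\nu$ as $\widetilde{\mathcal{P}}|_{\{1_G\}\times X}\cong \mathcal{P}$), restricting the fiber-product $\mathcal{A}t(\widetilde{\mathcal{P}}) \times_{\mathscr{T}_{G\times X}} p_1^*\mathscr{T}_G$ to the slice yields $\mathcal{A}t(\mathcal{P}) \times_{\mathscr{T}_X,\zeta} \mathcal{V} = \mathcal{A}t_\rho(\mathcal{P})$, compatibly with the maps to the sub- and quotient sheaves. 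Once this isomorphism of short exact sequences is in hand, functoriality of the connecting homomorphism in the long exact cohomology sequence gives $\kappa_1 = \kappa_2$ under the identification $H^0(X,\mathcal{V}) \cong \mathfrak{g}$ (noting $G$ connected, so $H^0(G,\mathscr{T}_G) \supset \mathfrak{g}$ restricts isomorphically).

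The main obstacle I anticipate is bookkeeping with the restriction functor: one must check that forming the $H$-invariant pushforward $p^H_*$ (which defines the Atiyah sheaf) commutes with the base change along $\nu$, i.e.\ that $\mathcal{A}t(\rho^*\mathcal{P})|_{\{1_G\}\times X}\cong \mathcal{A}t(\mathcal{P})$ canonically and in a way compatible with $\widetilde{\eta}$ and $\zeta$. This is a flat-base-change statement and should follow from the affineness of $p$ together with $\{1_G\}\times X \hookrightarrow G\times X$ being a regular embedding with the pullback bundle $\rho^*\mathcal{P}|_{\{1_G\}\times X}$ canonically $\mathcal{P}$; but it requires care because $\mathscr{T}_{\mathcal{P}}$ is not flat over $X$. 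I would handle this by instead pulling back the already-exact sequence \eqref{S0.01} for $\widetilde{\mathcal{P}}$ along $\nu$ (using that $\nu^*$ of the relative tangent sequence for $\widetilde{\mathcal{P}}/(G\times X)$ is the relative tangent sequence for $\mathcal{P}/X$, since $\nu$ is a closed immersion transverse to the fibers and the bundles agree on the slice), and only then applying $p^H_*$, sidestepping any base-change subtlety. The rest is diagram-chasing and the naturality of $\delta$ in cohomology.
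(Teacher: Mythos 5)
Your proposal is correct and follows essentially the same route as the paper: both identify the $\nu$-restriction of the top row of \eqref{infid_eq2} with the $G$-Atiyah sequence \eqref{G-Atseq} by observing that the two sequences are pullbacks of the Atiyah sequence of $\mathcal{P}$ along the same map, namely $\nu^*(d\rho \circ j)=\zeta$ as recorded in Remark \ref{DG-Lieq}, and then conclude by the fact that equal extensions have equal connecting homomorphisms. The paper organizes the base-change bookkeeping through the functoriality diagrams \eqref{infid_eq3}--\eqref{infid_eq5} (citing \cite{BMW12}) rather than your suggestion of pulling back \eqref{S0.01} before applying $p^H_*$, but this is only a difference in technical packaging, not in the argument.
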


\begin{proof}
Let $\nu : X \cong \{1_G\} \times X \hookrightarrow G \times X$ denote the inclusion map given by \(x \mapsto (1_G, x)\).	We show that the restriction of the top exact sequence in \eqref{infid_eq2} via $\nu$ coincides with the \(G\)-Atiyah sequence \eqref{G-Atseq} for the principal bundle $\mathcal{P}$. This will immediately imply that $\kappa_1$ and $\kappa_2$ are also same, being the connecting homomorphism of the same sequence. First note that we have the following canonical isomorphisms
\begin{equation*}
	\begin{split}
		& \nu ^* (\text{ad}(\widetilde{\mathcal{P}})) \cong \text{ad}(\nu^* \widetilde{\mathcal{P}}) \cong \text{ad}(\mathcal{P}) \text{ since } \rho \circ \nu =\text{Id}_X  ~ (\text{see \cite[Lemma 1.5]{BMW12}} ),\\
		& \nu ^* (p_1^* \, \mathscr{T}_G) \cong \mathcal{O}_X \otimes_{k} \mathfrak{g} \text{ since } \mathscr{T}_G \cong \mathcal{O}_G \otimes_{k} \mathfrak{g} \text{ and } p_1 \circ \nu =1_G.
	\end{split}
\end{equation*}
By the functoriality of the Atiyah sequence (see \cite[Lemma 1.12]{BMW12}), the map $\rho$ induces the following commutating diagram.
	\begin{equation}\label{infid_eq3}
	\xymatrixrowsep{1.8pc} \xymatrixcolsep{4pc}
	\xymatrix{
		0\ar@{->}[r]&
		\text{ad}(\widetilde{\mathcal{P}})\ar@{->}[r]^{\widetilde{\iota}_0}\ar@{->}[d]^{\cong}& \mathcal{A}t(\widetilde{\mathcal{P}})
		\ar@{->}[d]^{}\ar@{->}[r]^{\widetilde{\eta}}& \mathscr{T}_{G \times X} \ar@{->}[d]_{d \rho}\ar@{->}[d]\ar@{->}[r]&0\\
		0\ar@{->}[r]&
	\rho^*	\text{ad}(\mathcal{P})\ar@{->}[r]^{}&\rho^* \mathcal{A}t(\mathcal{P})\ar@{->}[r]^{}& \rho^* \mathscr{T}_{ X}\ar@{->}[r]&{0}.
	}
\end{equation}
Here the left most vertical arrow is the canonical isomorphism using \cite[Lemma 1.5]{BMW12}. The bottom row is exact since the map $\rho$ is flat (see \cite[Remark 2.2.3]{BrStr}). Then using \cite[Chapter III, Lemma 1.3]{HShomAG}, we get that the right-hand square in \eqref{infid_eq3} is a pull-back diagram. Now pulling back the \eqref{infid_eq2} via $\nu$, we get
	\begin{equation}\label{infid_eq4}
	\xymatrixrowsep{1.8pc} \xymatrixcolsep{4pc}
	\xymatrix{
		0\ar@{->}[r]&
		\text{ad}(\mathcal{P})\ar@{->}[r]^{}\ar@{->}[d]^{\text{Id}}& \nu ^*\mathcal{A}t_j(\widetilde{\mathcal{P}}) 
		\ar@{->}[d]^{}\ar@{->}[r]^{\eta'}&\nu^* \, p_1^* \, \mathscr{T}_G \ar@{->}[d]_{\nu^*(j)}\ar@{->}[d]\ar@{->}[r]&0\\
		0\ar@{->}[r]&
		\text{ad}(\mathcal{P})\ar@{->}[r]^{}& \nu^* \mathcal{A}t(\widetilde{\mathcal{P}})\ar@{->}[r]^{}&\nu^*\mathscr{T}_{G \times X}\ar@{->}[r]&{0}.
	}
\end{equation}
Here the bottom row is exact as the map $j$ is injective. Also the right-hand square is a pull-back diagram as before. Finally pulling back \eqref{infid_eq3} via $\nu$, we get
	\begin{equation}\label{infid_eq5}
	\xymatrixrowsep{1.8pc} \xymatrixcolsep{4pc}
	\xymatrix{
		0\ar@{->}[r]&
		\text{ad}(\mathcal{P})\ar@{->}[r]^{}\ar@{->}[d]^{\cong}& \nu^* \mathcal{A}t(\widetilde{\mathcal{P}})
		\ar@{->}[d]^{}\ar@{->}[r]^{}& \nu^* \mathscr{T}_{G \times X} \ar@{->}[d]_{\nu^*(d \rho)}\ar@{->}[d]\ar@{->}[r]&0\\
		0\ar@{->}[r]&
			\text{ad}(\mathcal{P})\ar@{->}[r]^{}& \mathcal{A}t(\mathcal{P})\ar@{->}[r]^{}&  \mathscr{T}_{ X}\ar@{->}[r]&{0}.
	}
\end{equation}
Here we have used the fact that \(\rho \circ \nu =\text{Id}_X\). From the diagrams \eqref{infid_eq4} and \eqref{infid_eq5}, we see that the top row in \eqref{infid_eq4} is the pull-back of the bottom row of \eqref{infid_eq5} via the composition \(\nu^*(d \rho) \circ \nu^*(j)\).  Note that this map coincides with the map $\zeta$ in \eqref{funda_ac} by Remark \ref{DG-Lieq}. Hence, the lemma follows. 
\end{proof}

The following proposition indicates that infinitesimal deformations of a principal bundle can be characterized in terms of \(G\)-connections. When the underlying variety is smooth, this has been obtained in \cite[Section 2.3]{BPeqcn}.
\begin{prop}\label{def_map_prop}
	The principal \(H\)-bundle $\mathcal{P}$ admits a \(G\)-connection if and only if $\kappa_1$ is the zero map. 
\end{prop}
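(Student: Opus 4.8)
The plan is to reduce the statement to a purely homological fact about the $G$-Atiyah sequence \eqref{G-Atseq}, using the identification of deformation maps established in Lemma \ref{def_map_lm}. Recall from Remark \ref{Gat_class} that $\mathcal{P}$ admits a $G$-connection if and only if the extension class $a_\rho(\mathcal{P}) \in \mathrm{Ext}^1(\mathcal{V}, \mathrm{ad}(\mathcal{P})) = H^1(X, \Hom(\mathcal{V}, \mathrm{ad}(\mathcal{P})))$ vanishes. By Lemma \ref{def_map_lm}, the deformation map $\kappa_1$ agrees with the connecting homomorphism $\kappa_2 : \mathfrak{g} = H^0(X, \mathcal{V}) \to H^1(X, \mathrm{ad}(\mathcal{P}))$ of the long exact cohomology sequence attached to \eqref{G-Atseq}. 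So the whole proposition amounts to: the class $a_\rho(\mathcal{P})$ vanishes if and only if the associated connecting map $\kappa_2$ is zero.

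First I would record the forward direction, which is immediate: if $a_\rho(\mathcal{P}) = 0$, the $G$-Atiyah sequence splits as $\mathcal{O}_X$-modules, so its long exact cohomology sequence breaks into short exact pieces and in particular the connecting homomorphism $\kappa_2 = \kappa_1$ is the zero map. For the converse, the key observation is that $\mathcal{V} = \mathcal{O}_X \otimes_k \mathfrak{g}$ is a \emph{trivial} bundle of rank $\dim \mathfrak{g}$. Fixing a basis $\delta_1, \dots, \delta_m$ of $\mathfrak{g}$ gives $\mathcal{V} \cong \mathcal{O}_X^{\oplus m}$, hence
\begin{equation*}
	\mathrm{Ext}^1(\mathcal{V}, \mathrm{ad}(\mathcal{P})) \cong H^1(X, \mathrm{ad}(\mathcal{P}))^{\oplus m},
\end{equation*}
and under this identification the class $a_\rho(\mathcal{P})$ is precisely the tuple $(\kappa_2(\delta_1), \dots, \kappa_2(\delta_m))$ — this is exactly the statement that the connecting homomorphism on global sections evaluates the extension class on the corresponding sections. (Concretely: restricting \eqref{G-Atseq} along the inclusion $\mathcal{O}_X \cdot \delta_i \hookrightarrow \mathcal{V}$ produces an extension of $\mathcal{O}_X$ by $\mathrm{ad}(\mathcal{P})$ whose class in $H^1(X,\mathrm{ad}(\mathcal{P}))$ is $\kappa_2(\delta_i)$, and the sum of these restrictions recovers the original class.) Thus $\kappa_1 = \kappa_2 = 0$ forces every component of $a_\rho(\mathcal{P})$ to vanish, whence $a_\rho(\mathcal{P}) = 0$ and $\mathcal{P}$ admits a $G$-connection.

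I expect the main point requiring care — not so much an obstacle as the one step that needs to be spelled out — to be the precise compatibility between the connecting homomorphism $H^0(X,\mathcal{V}) \to H^1(X,\mathrm{ad}(\mathcal{P}))$ and the decomposition of $\mathrm{Ext}^1(\mathcal{V}, \mathrm{ad}(\mathcal{P}))$ into copies of $H^1(X,\mathrm{ad}(\mathcal{P}))$ indexed by a basis of $\mathfrak{g}$; one must check that evaluating the Yoneda extension class on the global section $1 \otimes \delta_i$ yields exactly the $i$-th component, which is a standard but worth-stating identification of the connecting map with ``pullback of the extension along a section.'' Once that is in hand the argument closes with no further input beyond Lemma \ref{def_map_lm} and Remark \ref{Gat_class}.
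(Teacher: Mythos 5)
Your argument is correct, and both directions ultimately rest on the same two inputs as the paper's proof — Lemma \ref{def_map_lm} and the triviality of $\mathcal{V}=\mathcal{O}_X\otimes_k\mathfrak{g}$ (together with $H^0(X,\mathcal{O}_X)=k$, which is where completeness of $X$ enters via the identification $H^0(X,\mathcal{V})=\mathfrak{g}$) — but the converse is packaged differently. You prove vanishing of the obstruction: decompose $\mathrm{Ext}^1(\mathcal{V},\mathrm{ad}(\mathcal{P}))\cong H^1(X,\mathrm{ad}(\mathcal{P}))^{\oplus m}$ by pulling back along the summands $\mathcal{O}_X\cdot\delta_i\hookrightarrow\mathcal{V}$, and identify the $i$-th component of $a_\rho(\mathcal{P})$ with $\kappa_2(\delta_i)$ via the standard fact that the class of an extension of $\mathcal{O}_X$ by $\mathrm{ad}(\mathcal{P})$ equals the image of $1$ under the connecting map; this identification is exactly the step you correctly flag as needing to be spelled out, and it does hold (by naturality of the connecting homomorphism under pullback of extensions). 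The paper instead constructs the splitting directly: from $\kappa_1=\kappa_2=0$ and the long exact sequence it deduces that $\kappa_0=\mathrm{pr}_2(X):H^0(X,\mathcal{A}t_{\rho}(\mathcal{P}))\to H^0(X,\mathcal{V})=\mathfrak{g}$ is surjective, chooses a $k$-linear section $\kappa^{-1}:\mathfrak{g}\to V\subset H^0(X,\mathcal{A}t_{\rho}(\mathcal{P}))$, and takes $\lambda$ to be the induced $\mathcal{O}_X$-linear map, which satisfies $\mathrm{pr}_2\circ\lambda=\mathrm{Id}_{\mathcal{V}}$ because $\mathcal{V}$ is generated by the global sections $1\otimes\delta$. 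The two converses are essentially dual formulations of the same mechanism; the paper's avoids the Yoneda-class compatibility check at the cost of an explicit lifting, while yours yields a cleaner ``obstruction class vanishes'' statement at the cost of that (routine) verification.
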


\begin{proof}
	Let \(\lambda : \mathcal{V} \rightarrow \mathcal{A}t_{\rho}(\mathcal{P})\) be a \(G\)-connection on $\mathcal{P}$. Consider the induced map 
	\begin{equation*}
		\lambda_* : H^0(X, \mathcal{V}) \longrightarrow H^0(X, \mathcal{A}t_{\rho}(\mathcal{P}))
	\end{equation*}
between the corresponding global sections. Let
\begin{equation}\label{pf_def_map_prop1}
	H^0(X, \mathcal{A}t_{\rho}(\mathcal{P})) \stackrel{\kappa_0} \longrightarrow H^0(X, \mathcal{V})  \stackrel{\kappa_2} \longrightarrow H^1(X, \text{ad}(\mathcal{P}))
\end{equation}
be the long exact sequence arising from the Atiyah sequence \eqref{G-Atseq}. Since \(\lambda\) is a connection, we have  
\(\kappa_0 \circ \lambda_* =Id_{H^0 (X, \mathcal{V})} \). This shows that \(\kappa_0\) is surjective. Hence, $\kappa_2=0$ from the exactness of \eqref{pf_def_map_prop1}. Finally, using Lemma \ref{def_map_lm} we get \(\kappa_1=0\).

Conversely, suppose that $\kappa_1$ is the zero map. Hence, using Lemma \ref{def_map_lm} and \eqref{pf_def_map_prop1} we get that the map $\kappa_0$ is surjective. Let \(V  \subset 	H^0(X, \mathcal{A}t_{\rho}(\mathcal{P}))\) be a subspace such that the restriction 
\begin{equation*}
	\kappa=\kappa_0|_V : V \stackrel{\cong} \longrightarrow \mathfrak{g}
\end{equation*}
is an isomorphism. Using this define a map
\begin{equation*}
	\underline{\lambda} : \mathfrak{g} \stackrel{\kappa^{-1}} \longrightarrow V \subset 	H^0(X, \mathcal{A}t_{\rho}(\mathcal{P})),
\end{equation*}
which further induces the following map
\begin{equation*}
	\lambda : \mathcal{V} \longrightarrow \mathcal{A}t_{\rho}(\mathcal{P}).
\end{equation*}
Note that \(\text{pr}_2 \circ \lambda =Id_{\mathcal{V}} \). Hence, \(\lambda\) is a \(G\)-connection on the principal bundle $\mathcal{P}$.
\end{proof}

	\section{Equivariant structure and G-connection}\label{Equivariant structure and Gconnection}
	
In this section we investigate the relationship between the condition that a principal bundle admits a $G$–equivariant structure
and the condition that it admits a (flat) $G$–connection. When the underlying variety is smooth, this has been studied in \cite{BSN15, BPeqcn}. 

Let \(X\) be a complete variety over $k$.	Let \(\mathcal{P}\) be a principal \(H\)-bundle on \(X\) and $\rho : G \times X \rightarrow X$ be an action of a connected \text{algebraic group} \(G\) on \(X\). An equivariant structure on $\mathcal{P}$ is a lift of the action $\rho$ on $\mathcal{P}$ given by a morphism ${\sigma}: G \times \mathcal{P} \rightarrow \mathcal{P}$ such that 
	\begin{itemize}
		\item the following diagram commutes,
	\begin{equation}\label{equi-str}
		\begin{tikzpicture}[description/.style={fill=white,inner sep=2pt}]
			\matrix (m) [matrix of math nodes, row sep=3em,
			column sep=2.5em, text height=1.5ex, text depth=0.25ex]
			{ G \times \mathcal{P}    & &   \mathcal{P}  \\
				G \times X    & &   X  \\ };
			\path[->]  (m-1-1) edge node[auto] {}(m-1-3);
			\path[ ->] (m-2-1) edge node[below] {}(m-2-3);
			\path[->] (m-1-1) edge node[auto] {$Id \times p$}(m-2-1);
			\path[->] (m-1-1) edge node[above] {${\sigma}$}(m-1-3);
			\path[ ->] (m-1-3) edge node[auto] {$p$} (m-2-3);
			\path[->] (m-2-1) edge node[above] {$\rho$} (m-2-3);
		\end{tikzpicture}	.
	\end{equation} 
\item the actions of \(H\) and \(G\) on $\mathcal{P}$ commute.

\end{itemize}
	
\begin{prop}\label{eqtoconn}
	If the principal \(H\)-bundle $\mathcal{P}$ admits an equivariant structure, then $\mathcal{P}$ admits an integrable \(G\)-connection.
\end{prop}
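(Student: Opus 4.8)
The plan is to use the equivariant structure $\sigma : G \times \mathcal{P} \to \mathcal{P}$ to produce a canonical splitting of the $G$-Atiyah sequence \eqref{G-Atseq}, and to check that this splitting preserves the Lie algebra structure. First I would observe that, just as $\rho$ induces the Lie algebra homomorphism $\alpha : \mathfrak{g} \to H^0(X, \mathscr{T}_X)$, the lifted action $\sigma$ induces a homomorphism $\widetilde{\alpha} : \mathfrak{g} \to H^0(\mathcal{P}, \mathscr{T}_{\mathcal{P}})$; because the $G$-action and the $H$-action on $\mathcal{P}$ commute, the image of $\widetilde{\alpha}$ lands in the $H$-invariant part $H^0(\mathcal{P}, \mathscr{T}_{\mathcal{P}})^H = H^0(X, \mathcal{A}t(\mathcal{P}))$. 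Concretely, $\widetilde{\alpha}$ factors through $d\bar{\sigma} : \mathfrak{g} \to \mathrm{Lie}(\mathrm{Aut}^H(\mathcal{P}))$ composed with $\beta_{\mathcal{P}}$ of \eqref{P2}, where $\bar{\sigma} : G \to (\mathrm{Aut}^H(\mathcal{P}))^{\circ}$ is the homomorphism determined by $\sigma$. The commutativity of the square \eqref{equi-str} says precisely that $\theta \circ \bar{\sigma} = \bar{\rho}$, so by the commuting diagram \eqref{preli1} we get $\eta \circ \widetilde{\alpha} = \alpha$ as maps $\mathfrak{g} \to H^0(X, \mathscr{T}_X)$, i.e. each fundamental vector field on $\mathcal{P}$ coming from $\sigma$ is $\eta$-sent to the corresponding fundamental vector field on $X$ coming from $\rho$.

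Next I would build the connection. The map $\widetilde{\alpha}$, together with the identity on $\mathfrak{g}$, defines an $\mathcal{O}_X$-linear sheaf homomorphism $\widetilde{\lambda} : \mathcal{V} = \mathcal{O}_X \otimes_k \mathfrak{g} \to \mathcal{A}t(\mathcal{P})$ by $f \otimes \delta \mapsto f \cdot \widetilde{\alpha}(\delta)$, exactly as $\zeta$ was built from $\alpha$ in \eqref{funda_ac}. Since $\eta \circ \widetilde{\alpha} = \alpha$ and both $\eta$ and the construction are $\mathcal{O}_X$-linear, we get $\eta \circ \widetilde{\lambda} = \zeta$. Therefore the map $\lambda : \mathcal{V} \to \mathcal{A}t(\mathcal{P}) \oplus \mathcal{V}$ given by $\lambda(w) = (\widetilde{\lambda}(w), w)$ lands in $\mathrm{Ker}(\eta - \zeta) = \mathcal{A}t_{\rho}(\mathcal{P})$ and satisfies $\mathrm{pr}_2 \circ \lambda = \mathrm{Id}_{\mathcal{V}}$, so $\lambda$ is a $G$-connection on $\mathcal{P}$.

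Finally I would check integrability, i.e. that $\lambda$ is a Lie algebra homomorphism of sheaves. Since $\mathrm{pr}_2$ preserves the Lie bracket and $\iota$ does too, it suffices to check that $\widetilde{\lambda} : \mathcal{V} \to \mathcal{A}t(\mathcal{P})$ respects brackets. On the level of global constant sections $\mathfrak{g} \subset H^0(X, \mathcal{V})$ this is the statement that $\widetilde{\alpha} : \mathfrak{g} \to H^0(\mathcal{P}, \mathscr{T}_{\mathcal{P}})^H$ is a Lie algebra homomorphism, which holds because $\widetilde{\alpha} = \beta_{\mathcal{P}} \circ d\bar{\sigma}$ is a composite of Lie algebra homomorphisms (the identification $\beta_{\mathcal{P}}$ of \eqref{P2} preserves brackets, and the differential of a group homomorphism is a Lie algebra map). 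The bracket on $\mathcal{V} = \mathcal{O}_X \otimes_k \mathfrak{g}$ being $\mathcal{O}_X$-bilinearly extended from $\mathfrak{g}$, and the bracket on $\mathcal{A}t(\mathcal{P})$ being compatible with the one on fundamental vector fields in the same way, the bracket-preservation on constants propagates to all local sections; this is the same sort of computation recorded in the commented-out block after \eqref{funda_ac} for $\zeta$. I expect the main obstacle to be bookkeeping: carefully matching the sheaf-theoretic map $\widetilde{\lambda}$ with the group-theoretic map $d\bar{\sigma}$ via \eqref{P2} and \eqref{preli1}, and verifying that $\bar{\sigma}$ is indeed a morphism of algebraic groups $G \to (\mathrm{Aut}^H(\mathcal{P}))^{\circ}$ (so that $d\bar{\sigma}$ makes sense), which follows from the fact that $\sigma$ is a morphism of varieties and $\underline{\mathrm{Aut}}^H(\mathcal{P})$ is representable.
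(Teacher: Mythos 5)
Your proposal is correct and follows essentially the same route as the paper: lift the action to $\bar{\sigma}: G \to (\mathrm{Aut}^H(\mathcal{P}))^{\circ}$, use $\beta_{\mathcal{P}}\circ d\bar{\sigma}$ to produce $H$-invariant fundamental vector fields, verify $\eta\circ(\text{lift})=\zeta$ via the commuting triangle $\theta\circ\bar{\sigma}=\bar{\rho}$ and diagram \eqref{preli1}, and conclude integrability from the fact that the lift is a composite of Lie algebra homomorphisms. The only cosmetic difference is that the paper first builds the evaluation map $\widetilde{\zeta}$ on $\mathcal{O}_{\mathcal{P}}\otimes_k\mathfrak{g}$ and applies the invariant pushforward $p^H_*$, whereas you define the $\mathcal{O}_X$-linear map on $\mathcal{V}$ directly; these are equivalent.
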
	
	
\begin{proof}
	Let the principal bundle $\mathcal{P}$ admit an equivariant structure. The action $\sigma$ induces the following homomorphism of algebraic groups 
	\[\bar{\sigma} : G \longrightarrow  (\text{Aut}^H(\mathcal{P}))^0, \, g \mapsto (\sigma_g: z \mapsto \sigma(g, z)), \]
 since the action of \(G\) and \(H\) on $\mathcal{P}$ are compatible. This gives rise to the following map between Lie algebras
	\begin{equation*}
		d \bar{\sigma} : \mathfrak{g} \longrightarrow \text{Lie}(\text{Aut}^H(\mathcal{P})).
	\end{equation*}
	Composing it with the isomorphism \eqref{P2}, we get the map 
	\begin{equation*}
		\beta= \beta_{\mathcal{P}} \circ d \bar{\sigma} ~:~ \mathfrak{g} \longrightarrow H^0(\mathcal{P}, \mathscr{T}_{\mathcal{P}})^H \hookrightarrow H^0(\mathcal{P}, \mathscr{T}_{\mathcal{P}}).
	\end{equation*}
	This induces the following map as in \eqref{funda_ac}
	\begin{equation*}
		\widetilde{\zeta} : \mathcal{O}_{\mathcal{P}} \otimes_{k}  \mathfrak{g} \longrightarrow \mathscr{T}_{\mathcal{P}},
	\end{equation*}
	which is also compatible with the natural Lie algebra structures on the sheaves. Note that \( \widetilde{\mathcal{V}}:=\mathcal{O}_{\mathcal{P}} \otimes_{k}  \mathfrak{g}\) has a natural \(H\)-equivariant structure where \(H\) acts on $\mathfrak{g}$ trivially. Let \(V \subseteq \mathcal{P}\) be an \(H\)-invariant open subset. Then for \(\widetilde{f} \in \mathcal{O}_{\mathcal{P}}(V)\), $\delta \in \mathfrak{g}$ and \(h \in H\), we have
	
	\begin{equation*}
		\begin{split}
				\widetilde{\zeta}((\widetilde{f} \otimes \delta) \, h) & =	\widetilde{\zeta}(\widetilde{f} \otimes \delta\, h) =(\widetilde{f} \, h) \, \beta(\delta)|_V\\
				& = (\widetilde{f} \, \beta(\delta)|_V) \, h ~ (\text{since } \beta(\delta) \in H^0(\mathcal{P}, \mathscr{T}_{\mathcal{P}})^H)\\
				&= \widetilde{\zeta}((\widetilde{f} \otimes \delta)) \, h.
		\end{split}
	\end{equation*}
Hence, the map $\widetilde{\zeta}$ is also \(H\)-equivariant. Applying \(H\)-invariant pushforward \(p^H_*\), we get
	\begin{equation*}
		\zeta' : \mathcal{V} \cong p^H_* (\mathcal{O}_{\mathcal{P}}) \otimes_{k} \mathfrak{g} \longrightarrow p^H_* \mathscr{T}_{\mathcal{P}}=\mathcal{A}t(\mathcal{P}).
	\end{equation*}
	Now define the map \(\lambda\) as follows
	\begin{equation}\label{eqistr1.1} 
		\begin{split}
			\lambda : \mathcal{V} & \longrightarrow \mathcal{A}t(\mathcal{P}) \oplus \mathcal{V} \\
			 v & \longmapsto \zeta'(v) \oplus v.
			 \end{split}
	\end{equation}
We show that \(\eta \circ \zeta'=\zeta\). First note that as the bundle $\mathcal{P}$ is equivariant from \eqref{equi-str}, we get the following commutative diagram

\begin{equation}\label{equistr1.0}
	\begin{tikzpicture}
		\def\a{1.5} \def\b{2}
		\path
		(-\a,0) node (A) {$G$}      
		(\a,0) node (B) {$(\text{Aut}^H(\mathcal{P}))^0$}
		(0,-\b) node[align=center] (C) {$\text{Aut}^0(X)$};
		\begin{scope}[nodes={midway,scale=.75}]
			\draw[->] (A)--(B) node[above]{$\bar{\sigma}$};
			\draw[->] (A)--(C) node[left]{$\bar{\rho}$};
			\draw[->] (B)--(C) node[right]{$\theta$};
		\end{scope}
	\end{tikzpicture},
\end{equation}
	where that the map $\theta$ is defined in \eqref{gamma}. To see this, take \(g \in G\). We need to show that \(\theta(\sigma_g)=\rho_g\). But note that from \eqref{gammaeq}, we have
	\begin{equation*}
\begin{split}
			(\theta(\sigma_g))(p(z))= &p (\sigma_g(z))\\
			 = & \rho_g(p(z)) ~ (\text{using commutativity of the diagram in } \eqref{equi-str}).
\end{split}
	\end{equation*}
	Now the assertion follows using the surjectivity of the projection map \(p\). Taking the corresponding Lie algebras in \eqref{equistr1.0} and using \eqref{preli1}, we get the following commutative diagram of Lie algebras
	\begin{equation}\label{equistr1.2}
		\begin{tikzpicture}
			\def\a{1.5} \def\b{2}
			\path
			(-\a,0) node (A) {$\mathfrak{g}$}      
			(\a,0) node (B) {$H^0(\mathcal{P}, \mathscr{T}_{\mathcal{P}})^H$}
			(0,-\b) node[align=center] (C) {$H^0(X, \mathscr{T}_X)$};
			\begin{scope}[nodes={midway,scale=.75}]
				\draw[->] (A)--(B) node[above]{$\beta$};
				\draw[->] (A)--(C) node[left]{$\alpha$};
				\draw[->] (B)--(C) node[right]{$\eta(X)$};
			\end{scope}
		\end{tikzpicture}.
	\end{equation}
	Let \(U \) be an open subset of \(X\) and \(\delta \in \mathfrak{g}\). Denote by \(V=p^{-1}(U)\). Note that \(\zeta(1 \otimes \delta)= \alpha(\delta)|_U\) and $ \widetilde{\zeta}(1 \otimes \delta)=  \beta(\delta)|_V$. From \eqref{equistr1.2}, we have \((\eta(X) \circ \beta)(\delta)=\alpha(\delta)\). Since $\eta$ is a sheaf homomorphism, we get 
	\begin{equation*}
		\eta(\beta(\delta)|_V)= \eta(X) (\beta (\delta))|_{V} = \alpha(\delta)|_U.
	\end{equation*}
Hence, we get \(\eta \circ \zeta'=\zeta\).	Thus 
\begin{equation*}
	(\eta - \zeta)(\lambda(v))=(\eta \circ \zeta')(v)-\zeta (v)=0. 
\end{equation*}
Hence, we get that \(\text{Im}(\lambda) \subseteq \mathcal{A}t_{\rho}(\mathcal{P})\) and the induced map from \eqref{eqistr1.1} will also be denoted by 
	\begin{equation*}
		\lambda : \mathcal{V} \longrightarrow \mathcal{A}t_{\rho}(\mathcal{P}).
	\end{equation*}
	Clearly, the composition \(\text{pr}_2 \circ \lambda = Id_{\mathcal{V}}\). So \(\lambda\) defines a \(G\)-connection on the principal bundle $\mathcal{P}$. Also, since $\zeta'$ preserves the Lie algebra structure, \(\lambda\) is in fact an integrable connection. 
	\end{proof}

\begin{prop}\label{conntoint}
	Let the principal \(H\)-bundle \(p : \mathcal{P} \rightarrow X\) admit an integrable \(G\)-connection. Assume that \(G\) is semisimple and simply connected. Then $\mathcal{P}$ admits an equivariant structure.
\end{prop}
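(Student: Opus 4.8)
The plan is to run the construction from the proof of Proposition~\ref{eqtoconn} in reverse. Since $\mathcal{A}t_{\rho}(\mathcal{P})=\ker(\eta-\zeta)\subset\mathcal{A}t(\mathcal{P})\oplus\mathcal{V}$ and the connection satisfies $\text{pr}_2\circ\lambda=\mathrm{Id}_{\mathcal{V}}$, any $G$-connection has the form $\lambda(v)=(\widetilde{\lambda}(v),v)$ for an $\mathcal{O}_X$-linear map $\widetilde{\lambda}:\mathcal{V}\rightarrow\mathcal{A}t(\mathcal{P})$ with $\eta\circ\widetilde{\lambda}=\zeta$, and integrability of $\lambda$ means exactly that $\widetilde{\lambda}$ preserves the Lie algebra structures. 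First I would pass to global sections. Since $X$ is complete we have $H^0(X,\mathcal{V})=\mathfrak{g}$, and since $\mathcal{A}t(\mathcal{P})=p^H_*\mathscr{T}_{\mathcal{P}}$, formula \eqref{P2} gives $H^0(X,\mathcal{A}t(\mathcal{P}))\cong H^0(\mathcal{P},\mathscr{T}_{\mathcal{P}})^H\cong\text{Lie}(\text{Aut}^H(\mathcal{P}))$. Thus $\widetilde{\lambda}$ induces a homomorphism of Lie algebras $\psi:\mathfrak{g}\rightarrow\text{Lie}(\text{Aut}^H(\mathcal{P}))$. Using that $H^0(X,\zeta)=\alpha=\alpha_X\circ d\bar{\rho}$ (from \eqref{funda_ac}), the identity $\eta\circ\widetilde{\lambda}=\zeta$, and the commutative square \eqref{preli1}, I would check that $d\theta\circ\psi=d\bar{\rho}$.

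Next I would integrate $\psi$. Since $\text{Aut}^H(\mathcal{P})$ represents the functor $\underline{Aut}^H(\mathcal{P})$, it suffices to produce a homomorphism of algebraic groups $\bar{\sigma}:G\rightarrow(\text{Aut}^H(\mathcal{P}))^0$ with $d\bar{\sigma}=\psi$; then the universal automorphism induces an evaluation morphism $(\text{Aut}^H(\mathcal{P}))^0\times\mathcal{P}\rightarrow\mathcal{P}$, and precomposing with $\bar{\sigma}\times\mathrm{Id}_{\mathcal{P}}$ yields a morphism $\sigma:G\times\mathcal{P}\rightarrow\mathcal{P}$, $\sigma(g,z)=\bar{\sigma}(g)(z)$. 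That $\bar{\sigma}$ is a group homomorphism makes $\sigma$ an action of $G$ on $\mathcal{P}$, and the fact that $\bar{\sigma}$ takes values in $\text{Aut}^H(\mathcal{P})$ forces this action to commute with the $H$-action. To obtain $\bar{\sigma}$: because $\mathfrak{g}$ is perfect, the composite of $\psi$ with $\text{Lie}(\text{Aut}^H(\mathcal{P}))\rightarrow\text{Lie}(\text{Aut}^H(\mathcal{P}))/\text{Lie}((\text{Aut}^H(\mathcal{P}))^0_{\text{aff}})$, whose target is the Lie algebra of an abelian variety (Chevalley) and hence abelian, vanishes; so $\psi$ lands in the Lie algebra of the affine part, and the hypothesis that $G$ is semisimple and simply connected lets one lift it to a group homomorphism.

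It then remains to verify that $\sigma$ lifts $\rho$, i.e.\ that \eqref{equi-str} commutes. The homomorphism $\theta\circ\bar{\sigma}:G\rightarrow\text{Aut}^0(X)$ has differential $d\theta\circ d\bar{\sigma}=d\theta\circ\psi=d\bar{\rho}$; since $G$ is connected and the characteristic is zero, a homomorphism out of $G$ is determined by its differential, so $\theta\circ\bar{\sigma}=\bar{\rho}$. Hence for each $g\in G$, the defining relation \eqref{gammaeq} gives $p\circ\sigma_g=\theta(\bar{\sigma}(g))\circ p=\bar{\rho}(g)\circ p=\rho_g\circ p$, which is precisely the commutativity of \eqref{equi-str}. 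Therefore $\sigma$ is an equivariant structure on $\mathcal{P}$.

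The main obstacle is the integration step: upgrading the Lie algebra homomorphism $\psi$ to a homomorphism of algebraic groups $\bar{\sigma}$. This is exactly where the hypotheses that $G$ is semisimple and simply connected are used. Concretely, $\psi(\mathfrak{g})$ is a semisimple Lie subalgebra of $\text{Lie}((\text{Aut}^H(\mathcal{P}))^0_{\text{aff}})$, hence is the Lie algebra of a connected semisimple closed subgroup $K'\subset(\text{Aut}^H(\mathcal{P}))^0_{\text{aff}}$; then $\psi$ factors as $\mathfrak{g}\twoheadrightarrow\text{Lie}(K')$ with kernel an ideal of $\mathfrak{g}$, corresponding to a normal subgroup $G_1\subset G$, and simple-connectedness identifies $G/G_1$ with the simply connected cover of $K'$, providing the lift $G\rightarrow G/G_1\rightarrow K'\hookrightarrow\text{Aut}^H(\mathcal{P})$. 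Note that flatness is essential: without it one would recover only the linear map $\psi$, not a Lie algebra homomorphism, and no group homomorphism would be available — consistent with Proposition~\ref{eqtoconn} producing only \emph{integrable} connections from equivariant structures.
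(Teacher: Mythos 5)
Your proposal is correct and follows essentially the same route as the paper: pass to global sections to obtain a Lie algebra homomorphism $\mathfrak{g}\rightarrow\text{Lie}(\text{Aut}^H(\mathcal{P}))$ compatible with $\alpha$ via $d\theta$, integrate it using that $G$ is semisimple and simply connected, and deduce the lifting condition \eqref{equi-str} from the uniqueness of a homomorphism with a given differential on a connected group in characteristic zero. The only difference is that you spell out (via the Chevalley decomposition and perfectness of $\mathfrak{g}$) why the integration step works even though $\text{Aut}^H(\mathcal{P})$ need not be affine, a point the paper covers by citing Demazure--Gabriel directly.
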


\begin{proof}
	Let \(\lambda : \mathcal{V} \rightarrow \mathcal{A}t_{\rho}(\mathcal{P})\) be an integrable \(G\)-connection. Since, \(\text{pr}_2 \circ \lambda = Id_{\mathcal{V}}\), we get
	\begin{equation*}
		\lambda( w)=(v,w),
	\end{equation*}
where \(v\), \(w\) are sections of the sheaves \(\mathcal{A}t(\mathcal{P})\) and $\mathcal{V}$ such that \((v, w) \) is a section of  $\mathcal{A}t_{\rho}(\mathcal{P})$, i.e.
\begin{equation}\label{conntoint1}
	\eta(v)=\zeta (w).
\end{equation}
 Consider the following composite of maps
\begin{equation*}
	\lambda' : \mathcal{V} \stackrel{\lambda} \longrightarrow \mathcal{A}t_{\rho}(\mathcal{P}) \stackrel{\text{pr}_1} \longrightarrow \mathcal{A}t(\mathcal{P}).
\end{equation*}
So \(\lambda'(w)=v\). Taking global sections we get the following map.
\begin{equation*}
	\lambda'(X) : \mathfrak{g} \longrightarrow H^0(\mathcal{P}, \mathscr{T}_{\mathcal{P}})^H.
\end{equation*}
Note that \(\lambda'(X)\) is a Lie algebra homomorphism as \(\lambda\) is integrable. Using \eqref{conntoint1}, we see that
\begin{equation}\label{conntoint2}
	\eta(X) \circ \lambda'(X)=\alpha.
\end{equation}
 Since \(G\) is connected, simply connected and semisimple for any algebraic group \(G'\) and any Lie algebra homomorphism from $\mathfrak{g}$ to \(\text{Lie}(G')\) lifts to a unique morphism from \(G\) to \(G'\) (see \cite[Proposition 2.9(b), Section 6, Ch II]{D-M}). Thus there are algebraic group homomorphisms \(\bar{\rho}= \bar{\rho'} : G \rightarrow \text{Aut}(X)\) and \(\bar{\sigma} : G \rightarrow \text{Aut}^H(\mathcal{P})\) such that
\begin{equation*}
	\text{Lie}(\bar{\rho}) = \alpha, \,  \text{Lie}(\bar{\rho'}) =\eta(X) \circ \lambda'(X) \text{ and } \text{Lie}(\bar{\sigma})=\lambda'(X).
\end{equation*}
Since \(\eta(X) \circ \lambda'(X)= \text{Lie}(\theta) \circ  \text{Lie}(\bar{\sigma})=\text{Lie}(\theta \circ \bar{\sigma})\), we have \( \bar{\rho'}=\theta \circ \bar{\sigma}\), i.e. the following diagram commutes.
\begin{equation*}
	\begin{tikzpicture}
		\def\a{1.5} \def\b{2}
		\path
		(-\a,0) node (A) {$G$}      
		(\a,0) node (B) {$\text{Aut}^H(\mathcal{P})$}
		(0,-\b) node[align=center] (C) {$\text{Aut}(X)$};
		\begin{scope}[nodes={midway,scale=.75}]
			\draw[->] (A)--(B) node[above]{$\bar{\sigma}$};
			\draw[->] (A)--(C) node[below]{$\bar{\rho}$};
			\draw[->] (B)--(C) node[right]{$\theta$};
		\end{scope}
	\end{tikzpicture} 
	\end{equation*}
(see \cite[Proposition 2.1, Section 6, Ch II]{D-M}). This implies that the conditions in \eqref{equi-str} are satisfied (cf. \cite[Remark 2.4(i)]{Br18}). Hence, $\mathcal{P}$ admits an equivariant structure.
\end{proof}

\begin{prop}
	Suppose the principal \(H\)-bundle \(p : \mathcal{P} \rightarrow X\) admits a \(G\)-connection. Assume that \(G\) is semisimple. Then, $\mathcal{P}$ admits an integrable \(G\)-connection.
\end{prop}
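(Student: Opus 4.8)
The plan is to upgrade an arbitrary $G$-connection to an integrable one by a purely Lie-theoretic argument, exploiting that $\mathfrak{g}$ is semisimple.

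First I would take global sections of the $G$-Atiyah sequence \eqref{G-Atseq}. Since $X$ is complete and connected, $H^0(X,\mathcal{O}_X)=k$, so $H^0(X,\mathcal{V})=\mathfrak{g}$ with the Lie bracket on constant sections being that of $\mathfrak{g}$; and since $\mathcal{A}t_{\rho}(\mathcal{P})$ is locally free of finite rank and $X$ is proper, $\mathfrak{L}:=H^0(X,\mathcal{A}t_{\rho}(\mathcal{P}))$ is a finite-dimensional Lie algebra over $k$. As the maps in \eqref{G-Atseq} preserve the Lie algebra structures, $\kappa_0:=H^0(X,\mathrm{pr}_2)\colon\mathfrak{L}\to\mathfrak{g}$ is a homomorphism of Lie algebras. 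Given a $G$-connection $\lambda_0\colon\mathcal{V}\to\mathcal{A}t_{\rho}(\mathcal{P})$, the map $H^0(X,\lambda_0)$ is a $k$-linear section of $\kappa_0$; in particular $\kappa_0$ is surjective. (Equivalently, one may invoke Proposition \ref{def_map_prop} together with Lemma \ref{def_map_lm}.)

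Next I would split $\kappa_0$ as Lie algebras. Let $\mathfrak{r}$ be the radical of $\mathfrak{L}$; then $\kappa_0(\mathfrak{r})$ is a solvable ideal of the semisimple algebra $\mathfrak{g}$, hence zero, so $\mathfrak{r}\subseteq\ker\kappa_0$. By Levi's theorem choose a Levi (semisimple) subalgebra $\mathfrak{s}_0\subseteq\mathfrak{L}$ with $\mathfrak{L}=\mathfrak{r}\oplus\mathfrak{s}_0$ as vector spaces. Then $\kappa_0|_{\mathfrak{s}_0}\colon\mathfrak{s}_0\to\mathfrak{g}$ is surjective, being a surjection of semisimple Lie algebras; decomposing $\mathfrak{s}_0$ into its simple ideals, $\ker(\kappa_0|_{\mathfrak{s}_0})$ is a partial sum of these and the complementary partial sum $\mathfrak{s}$ is a subalgebra of $\mathfrak{L}$ carried isomorphically onto $\mathfrak{g}$ by $\kappa_0$. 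Put $\underline{\lambda}:=(\kappa_0|_{\mathfrak{s}})^{-1}\colon\mathfrak{g}\xrightarrow{\ \cong\ }\mathfrak{s}\hookrightarrow H^0(X,\mathcal{A}t_{\rho}(\mathcal{P}))$, a Lie algebra homomorphism with $\kappa_0\circ\underline{\lambda}=\mathrm{id}_{\mathfrak{g}}$.

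Finally, exactly as in the proof of Proposition \ref{def_map_prop}, $\underline{\lambda}$ induces an $\mathcal{O}_X$-linear sheaf map $\lambda\colon\mathcal{V}=\mathcal{O}_X\otimes_k\mathfrak{g}\to\mathcal{A}t_{\rho}(\mathcal{P})$, $f\otimes\delta\mapsto f\,\underline{\lambda}(\delta)$, with $\mathrm{pr}_2\circ\lambda=\mathrm{id}_{\mathcal{V}}$, so $\lambda$ is a $G$-connection. To see it is integrable, note that $\lambda$ is $\mathcal{O}_X$-linear and that the composite $\mathcal{A}t_{\rho}(\mathcal{P})\xrightarrow{\mathrm{pr}_2}\mathcal{V}\xrightarrow{\zeta}\mathscr{T}_X$ precomposed with $\lambda$ is $\zeta$; hence, by the Leibniz-type rules for the Lie brackets on $\mathcal{V}$ and on $\mathcal{A}t_{\rho}(\mathcal{P})$, the identity $\lambda([w_1,w_2])=[\lambda(w_1),\lambda(w_2)]$ reduces to the case of constant sections $w_i=1\otimes\delta_i$, where it holds because $\underline{\lambda}$ is a homomorphism of Lie algebras (this is the same mechanism by which the connection built in Proposition \ref{eqtoconn} is shown to be integrable). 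Thus $\mathcal{P}$ admits an integrable $G$-connection. The one substantive ingredient beyond bookkeeping is the splitting of a surjection onto a semisimple Lie algebra, i.e. Levi's theorem combined with semisimplicity of $\mathfrak{g}$, together with the prerequisite that $\mathfrak{L}$ be finite-dimensional (which is where completeness of $X$ enters); that is the step I would treat most carefully.
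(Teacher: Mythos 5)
Your proof is correct and follows essentially the same route as the paper: take global sections, observe that $\mathrm{pr}_2(X)\colon H^0(X,\mathcal{A}t_{\rho}(\mathcal{P}))\to\mathfrak{g}$ is a surjective Lie algebra homomorphism onto a semisimple target, split it by a Lie subalgebra, and spread the resulting section out to an $\mathcal{O}_X$-linear (hence integrable) connection. The only difference is that you derive the Lie-algebra splitting by hand from Levi's theorem and the simple-ideal decomposition, whereas the paper cites Bourbaki (Corollaire 3, p.~91) for exactly this fact.
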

	
	\begin{proof}
		Let \(\lambda : \mathcal{V} \rightarrow \mathcal{A}t_{\rho}(\mathcal{P})\) be a \(G\)-connection. Then taking global sections, we get 
		\begin{equation*}
			\lambda(X) : \mathfrak{g} \longrightarrow H^0(X, \mathcal{A}t_{\rho}(\mathcal{P})),
		\end{equation*}
	which is $k$-linear. Since \(\lambda\) is a connection, we have \(\text{pr}_2(X) \circ \lambda(X) = Id_{\mathfrak{g}}\). Hence, the Lie algebra homomorphism \(\text{pr}_2(X)\) is also surjective. Since $\mathfrak{g}$ is semisimple there exists a Lie subalgebra $\mathfrak{a}$ of \(H^0(X, \mathcal{A}t_{\rho}(\mathcal{P}))\) such that 
	\begin{equation*}
		q:= \text{pr}_2(X)|_{\mathfrak{a}} : \mathfrak{a} \stackrel{\cong}\longrightarrow \mathfrak{g}
	\end{equation*}
is an isomorphism of Lie algebras (see \cite[Corollaire 3, pg. 91]{Bourbaki}). Consider the homomorphism of sheaves
\begin{equation*}
	\lambda': \mathcal{V} \longrightarrow \mathcal{A}t_{\rho}(\mathcal{P})
\end{equation*}
induced from the map \(q^{-1} : \mathfrak{g} \rightarrow \mathfrak{a} \subseteq H^0(X, \mathcal{A}t_{\rho}(\mathcal{P}))\). Clearly, \(\lambda'\) preserves the Lie algebra structures and \(\text{pr}_2 \circ \lambda' = Id_{\mathcal{V}}\) holds. Thus \(\lambda'\) is an integrable \(G\)-connection.
	\end{proof}
	
	\begin{cor}\label{exist_conn_1}
		Let \(p : \mathcal{P} \rightarrow X\) be a principal \(H\)-bundle. Assume that \(G\) is semisimple and simply connected. Then the following conditions are equivalent.
		\begin{enumerate}
			\item The principal bundle admits an equivariant structure.
			\item The principal bundle admits an integrable \(G\)-connection.
			\item The principal bundle admits a \(G\)-connection.
		\end{enumerate}
	\end{cor}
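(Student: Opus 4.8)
The plan is to close a cycle of implications $(1) \Rightarrow (2) \Rightarrow (3) \Rightarrow (1)$ by assembling the propositions already established in this section. The implication $(1) \Rightarrow (2)$ is exactly Proposition \ref{eqtoconn}: an equivariant structure yields an integrable $G$-connection, with no hypothesis on $G$ beyond connectedness. The implication $(2) \Rightarrow (3)$ is trivial, since an integrable $G$-connection is in particular a $G$-connection. So the only content to be organized is $(3) \Rightarrow (1)$.

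For $(3) \Rightarrow (1)$, I would proceed in two steps. First, apply the preceding proposition (the one asserting that if $\mathcal{P}$ admits a $G$-connection and $G$ is semisimple, then $\mathcal{P}$ admits an \emph{integrable} $G$-connection): this upgrades any $G$-connection to an integrable one, using that $\mathfrak{g}$ is semisimple and hence, by the Levi--Malcev splitting applied to the surjection $\mathrm{pr}_2(X)\colon H^0(X,\mathcal{A}t_\rho(\mathcal{P})) \to \mathfrak{g}$ of Lie algebras, one can choose a Lie-subalgebra section. Second, feed this integrable $G$-connection into Proposition \ref{conntoint}, which requires precisely that $G$ be semisimple and simply connected, to conclude that $\mathcal{P}$ admits an equivariant structure. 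Chaining these gives $(3) \Rightarrow (1)$, completing the cycle.

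I do not anticipate a genuine obstacle here, since every arrow is a citation to an earlier result; the only care needed is bookkeeping of hypotheses. Note that semisimplicity of $G$ is used for the upgrade $(3) \Rightarrow (2)$ and simple-connectedness is used in Proposition \ref{conntoint} (to lift the Lie-algebra homomorphism $\lambda'(X)\colon \mathfrak{g} \to \mathrm{Lie}(\mathrm{Aut}^H(\mathcal{P}))$ to a group homomorphism $\bar\sigma\colon G \to \mathrm{Aut}^H(\mathcal{P})$, and similarly to match it with $\bar\rho$ over $\mathrm{Aut}(X)$); both hypotheses in the statement are therefore essential and both are invoked. The proof is then a single paragraph: $(1)\Rightarrow(2)$ by Proposition \ref{eqtoconn}; $(2)\Rightarrow(3)$ is immediate; $(3)\Rightarrow(2)$ by the preceding proposition (using $G$ semisimple); and $(2)\Rightarrow(1)$ by Proposition \ref{conntoint} (using $G$ semisimple and simply connected).
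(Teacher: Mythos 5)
Your proposal is correct and matches the paper's intended argument: the corollary is stated immediately after Proposition \ref{eqtoconn}, Proposition \ref{conntoint}, and the proposition upgrading a $G$-connection to an integrable one for semisimple $G$, and is meant to follow by exactly the chain of implications you describe. Your bookkeeping of where semisimplicity and simple-connectedness are each used is also accurate.
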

	
	\begin{rmk}{\rm 
		Some examples of semisimple and simply connected algebraic groups are \(SL(n, k)\), \(Sp(2n, k)\) and \(Spin(n, k)\) and non-examples are \(PGL(n, k)\) and \(SO(n, k)\) (see \cite[Table 9.2]{Malle}).
	}
	\end{rmk}
	
	The following proposition provides a criterion for existence of equivariant structure extending \cite[Corollary 4.4]{BPeqcn} to the case of varieties possibly having singularities.
	
	\begin{prop}\label{klytype}
		Let $p : \mathcal{P} \rightarrow X$ be a principal \(H\)-bundle on \(X\) such that for all \(g \in G\) we have isomorphisms
		\begin{equation}\label{eq_str_phi_g}
			\Phi_g : \mathcal{P} \stackrel{\cong} \longrightarrow \rho_g^* \, \mathcal{P}
		\end{equation}
	of principal \(H\)-bundles over \(X\). Assume that \(G\) is semisimple and simply connected. Then, $\mathcal{P}$ admits an equivariant structure.
	\end{prop}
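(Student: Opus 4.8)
The plan is to reduce the statement, via Corollary~\ref{exist_conn_1} and Proposition~\ref{def_map_prop}, to the vanishing of the infinitesimal deformation map \(\kappa_1\), and then to deduce that vanishing from the hypothesis. Since \(G\) is semisimple and simply connected, Corollary~\ref{exist_conn_1} shows that \(\mathcal P\) admits a \(G\)-equivariant structure as soon as it admits a \(G\)-connection, and by Proposition~\ref{def_map_prop} the latter holds if and only if the map \(\kappa_1:\mathfrak g\to H^1(X,\text{ad}(\mathcal P))\) of \eqref{def_map} is zero. Recall that \(\kappa_1\) is the Kodaira--Spencer (infinitesimal deformation) map, computed at \(1_G\), of the family of principal \(H\)-bundles \(\widetilde{\mathcal P}:=\rho^*\mathcal P\to G\times X\) regarded as a family over the base \(G\) via \(q:=p_1\) --- explicitly, the connecting homomorphism of the top row of \eqref{infid_eq2} restricted along \(\nu\). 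So it suffices to prove \(\kappa_1=0\).

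The hypothesis \eqref{eq_str_phi_g} says precisely that every closed fibre \(\widetilde{\mathcal P}|_{\{g\}\times X}=\rho_g^*\mathcal P\) is isomorphic, as a principal \(H\)-bundle on \(X\), to the fixed bundle \(\mathcal P\). Form the Isom-scheme \(\mathcal J:=q_*\,\underline{\text{Isom}}^{H}_{G\times X}\big(p_2^*\mathcal P,\widetilde{\mathcal P}\big)\to G\), whose \(S\)-points (for \(G\)-schemes \(S\)) are the \(H\)-bundle isomorphisms \(\mathcal P\times_X(X\times S)\xrightarrow{\ \cong\ }\widetilde{\mathcal P}\times_G S\). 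Since \(\underline{\text{Isom}}^{H}_{G\times X}(p_2^*\mathcal P,\widetilde{\mathcal P})\to G\times X\) is a torsor under \(\underline{\text{Aut}}^{H}(p_2^*\mathcal P)\) and \(X\) is complete, \(\mathcal J\to G\) is a pseudo-torsor under the gauge group \(A:=\text{Aut}^{H}_X(\mathcal P)\), an affine algebraic group (cf.\ \cite[Theorem~4.2]{Brauto}). By hypothesis \(\mathcal J\) has a \(k\)-point over every closed point of \(G\); its image is constructible by Chevalley's theorem, hence --- \(G\) being a reduced Jacobson scheme --- equals \(G\). Thus \(\mathcal J\to G\) is a surjective pseudo-torsor under the smooth group \(A\), and one concludes (this is the crux) that it is a genuine \(A\)-torsor; equivalently, the classifying morphism \(G\to\underline{\text{Bun}}_H(X)\) factors through the residual gerbe \(BA\) at \([\mathcal P]\).

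It remains to extract \(\kappa_1=0\). Restrict the \(A\)-torsor \(\mathcal J\) to the first-order neighbourhood \(T:=\text{Spec}\big(\mathcal O_{G,1_G}/\mathfrak m_{1_G}^{2}\big)\hookrightarrow G\), a square-zero thickening of \(\text{Spec}(k)\) with tangent space \(\mathfrak g\). Because \(A\) is smooth, every \(A\)-torsor over \(T\) is trivial (\(H^1_{\text{fppf}}(T,A)\xrightarrow{\ \sim\ }H^1_{\text{fppf}}(\text{Spec}\,k,A)=\{\ast\}\)), so \(\mathcal J\times_G T\to T\) has a section, i.e.\ there is an isomorphism \(p_2^*\mathcal P\xrightarrow{\ \cong\ }\widetilde{\mathcal P}\) over \(T\times X\). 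Hence the restriction of the family \(\widetilde{\mathcal P}\) to the first-order neighbourhood of \(\{1_G\}\times X\) is the trivial first-order deformation of \(\mathcal P\), which is exactly the statement \(\kappa_1=0\) (compare Lemma~\ref{def_map_lm}). Proposition~\ref{def_map_prop} now yields a \(G\)-connection on \(\mathcal P\), and Corollary~\ref{exist_conn_1} upgrades it to a \(G\)-equivariant structure. The main obstacle is the middle step --- passing from the pointwise isomorphisms \(\Phi_g\) to the assertion that \(\mathcal J\to G\) is an actual \(A\)-torsor (equivalently, that \(\widetilde{\mathcal P}\) is genuinely isotrivial over \(G\), not merely fibrewise trivial); this relies on the constructibility of the triviality locus together with the algebraicity of \(\underline{\text{Bun}}_H(X)\) and the structure of its residual gerbes, valid because \(X\) is complete and \(H\) reductive. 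Note that semisimplicity and simple-connectedness of \(G\) enter only through Corollary~\ref{exist_conn_1}: under \eqref{eq_str_phi_g}, the vanishing \(\kappa_1=0\) --- equivalently, the existence of a \(G\)-connection --- holds for any connected algebraic group \(G\).
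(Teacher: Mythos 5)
Your overall route coincides with the paper's: both reduce the proposition, via Proposition \ref{def_map_prop} and Corollary \ref{exist_conn_1}, to the vanishing of the infinitesimal deformation map $\kappa_1$ of the family $\widetilde{\mathcal{P}}=\rho^*\mathcal{P}$, and both must convert the purely pointwise hypothesis \eqref{eq_str_phi_g} into a statement about the family near $1_G$. Where the paper assembles the (a priori unrelated) isomorphisms $\Phi_g$ into a map $\Psi\colon p_2^*\mathcal{P}\to\widetilde{\mathcal{P}}$ over $G\times X$, you instead form the Isom-scheme $\mathcal{J}\to G$ and try to prove it is a torsor under the gauge group $A$. You have correctly located the real difficulty, but the inference you use to resolve it is not valid as stated: a surjective pseudo-torsor under a smooth affine group need not be a torsor. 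Take $A$ to be the trivial group, so that a pseudo-torsor over $G$ is just a monomorphism into $G$; then $\{0\}\sqcup\mathbb{G}_m\to\mathbb{A}^1$ is a surjective finite-type monomorphism that is not an isomorphism. Surjectivity (which is all that Chevalley plus the Jacobson property give you) does not yield flatness of $\mathcal{J}\to G$, and flatness is exactly what you need before you may restrict to the non-reduced scheme $T$ and invoke smoothness of $A$ to produce a section; the fibrewise hypothesis by itself says nothing about $T$-points of $\mathcal{J}$.

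What actually closes the gap is the reducedness of $G$ combined with a cohomology-and-base-change (Grauert) argument: since every closed fibre of $\widetilde{\mathcal{P}}$ is isomorphic to $\mathcal{P}$, the dimensions of $H^0$ of the relevant Hom/ad-type sheaves (e.g.\ after embedding $H$ into some $GL_n$) are independent of $g\in G$, so their pushforwards to the reduced scheme $G$ are locally free and compatible with base change; this is what makes $\mathcal{J}\to G$ smooth and hence, being surjective, a torsor. Equivalently, one can argue that the classifying morphism from the reduced scheme $G$ to $\underline{\text{Bun}}_H(X)$ with set-theoretic image $\{[\mathcal{P}]\}$ factors through the residual gerbe $BA$, whose tangent space at $[\mathcal{P}]$ vanishes because $A$ is smooth. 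Either of these would complete your argument, and only then does restriction to $T$ give $\kappa_1=0$. To be fair, the paper's own proof elides essentially the same point (the pointwise $\Phi_g$ carry no algebraicity in $g$, and the citation of Narasimhan--Ramanan is where that issue is really handled), so your write-up makes the hidden difficulty visible; it does not yet resolve it. Your closing remark that semisimplicity and simple connectedness of $G$ enter only through Corollary \ref{exist_conn_1} is correct.
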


\begin{proof}
	We first show that the given condition \eqref{eq_str_phi_g} implies that there is an isomorphism as principal \(H\)-bundles over \(G \times X\)
	\begin{equation*}
	\Psi:	p_2 ^* \, \mathcal{P} \stackrel{\cong} \longrightarrow \rho^* \, \mathcal{P}=: \widetilde{\mathcal{P}},
	\end{equation*}
	where \(p_2 : G \times X \rightarrow X\) denotes the projection onto the second factor. Note that
	\begin{equation*}
		\begin{split}
		&	p_2 ^* \, \mathcal{P} =\{(g, x, z) \in (G \times X \times \mathcal{P}) ~:~ x=p(z) \} \text{ and }	\widetilde{\mathcal{P}}= \{(g, x, z) \in (G \times X \times \mathcal{P}) ~:~ \rho(g,x)=p(z) \}.
		\end{split}
	\end{equation*}
Let \((g,x,z) \in p_2 ^* \, \mathcal{P}\). Then using the isomorphism $\Phi_g$, we get
\begin{equation*}
	\Phi_g(z)=(x, z') \text{ such that } \rho(g,x)=p(z').
\end{equation*}
Hence, we define \(\Psi(g,x,z)=(g,x,z')\). Note that its inverse map is given by
\begin{equation*}
	(g_1,x_1,z_1) \longmapsto (g_1,x_1, \Phi_{g_1}^{-1}(x_1,z_1)).
\end{equation*}
Thus the top exact sequence in \eqref{infid_eq2} admits a canonical splitting (cf. \cite[Section 3]{NR75}). Hence, by Lemma \ref{def_map_lm} the infinitesimal deformation map $\kappa_1$ is zero map. Then by Proposition \ref{def_map_prop} the principal bundle $\mathcal{P}$ admits a \(G\)-connection. Finally, using Corollary \ref{exist_conn_1} we conclude that $\mathcal{P}$ admits an equivariant structure.
\end{proof}
	
	\section{Examples}
	
	\subsection{Toric varieties}
	Let \(X\) be a complete toric variety under the action of the algebraic torus \(T \cong (k^*)^n\). Let \(D= X \setminus T\) be the boundary divisor. Then by \cite[Proposition 3.1]{Oda}, we have that the logarithmic tangent sheaf $\mathscr{T}_X(-\text{log}D) \subseteq \mathscr{T}_X$ is isomorphic to the trivial bundle \(\mathcal{O}_X \otimes_k \text{Lie}(T)\). Hence, a \(T\)-connection is same as a logarithmic connection singular along \(D\). Note that the torus \(T\) is not semisimple, so we cannot directly apply Corollary \ref{exist_conn_1}. A different proof is given in \cite{DKP}.

\subsection{Line bundle} Let \(X\) be a normal complete variety over \(k\). Let \(G\) be a connected linear algebraic group acting on \(X\). Then for any line bundle $\mathcal{L}$ on \(X\) there exists an integer \(n\) such that the tensor power $\mathcal{L}^{\otimes n}$ admits a \(G\)-equivariant structure (see \cite[Corollary 1.6]{GIT-MF}). Hence, $\mathcal{L}^{\otimes n}$ admits an integrable \(G\)-connection by Corollary \ref{exist_conn_1}. If \(X\) satisfies \(H^1(X, \mathcal{O}_X)=0\), then by Remark \ref{Gat_class} we see that any line bundle on \(X\) admits a \(G\)-connection. As a consequence, $\mathcal{O}_{\mathbb{P}^n}(1)$ admits a \(PGL(n+1, k)\)-connection but it does not admit any  \(PGL(n+1, k)\)-equivariant structure (see \cite[Page 33]{GIT-MF}). Note that \(PGL(n+1, k)\) is not simply connected (\(SL(n+1, k)\) being the simply connected cover). This shows that the assumptions on \(G\) in Corollary \ref{exist_conn_1} are necessary.

\subsection{Trivial Action}
If the action \(\rho : G \times X \rightarrow X\) is trivial, the image of the induced map \(\bar{\rho} : G \rightarrow \text{Aut}^0(X)\) is \(Id_X\). Hence, the induced map $\alpha : \mathfrak{g} \rightarrow H^0(X, \mathscr{T}_X)$ is the zero map. Hence, we have
\begin{equation*}
	\mathcal{A}t_{\rho}(\mathcal{P})=\text{ad}(\mathcal{P}) \oplus (\mathcal{O}_X \otimes \mathfrak{g}).
\end{equation*} 
Hence, a \(G\)-connection on the principal bundle $\mathcal{P}$ corresponds to a $\mathcal{O}_X$-module homomorphism \(\mathcal{O}_X \otimes \mathfrak{g} \rightarrow \text{ad}(\mathcal{P}) \).  Note that there is a tautological \(G\)-connection on $\mathcal{P}$ induced from the zero map from \(\mathcal{O}_X \otimes \mathfrak{g}\) to  \(\text{ad}(\mathcal{P}) \). Hence, by Proposition \eqref{eqtoconn} the principal bundle $\mathcal{P}$ admits a natural integrable \(G\)-connection.

\subsection{Tautological \(G\)-Connection}
Let \(X\) be a complete variety over \(k\) and \(p : \mathcal{P} \rightarrow X\) be a principal \(H\)-bundle. Recall that \(G:=(Aut^H(\mathcal{P}))^{\circ}\) is an algebraic group. Note that \(G\) has a tautological action on $\mathcal{P}$ and \(X\), respectively given as follows:
\begin{equation*}
	\begin{split}
	&	\rho_0: G \times \mathcal{P} \rightarrow \mathcal{P}, \text{ given by } (\psi, z) \rightarrow \psi(z)\\
		&\sigma_0: G \times X \rightarrow X, \text{ given by } (\psi, x) \rightarrow \theta(\psi)(x),
	\end{split}
\end{equation*}
where \(\psi \in G, x \in X\), \(z \in \mathcal{P}\) and $\theta$ is defined in \eqref{gamma}. Note that under these actions the principal bundle $\mathcal{P}$ becomes \(G\)-equivariant using \eqref{gammaeq}. Hence, it admits an integrable \(G\)-connection by Proposition \ref{eqtoconn} (cf. \cite[Proposition 3.3]{BPeqcn}).


\begin{bibdiv}

	\begin{biblist}
		
		\bib{At57}{article}{
			author={Atiyah, M. F.},
			title={Complex analytic connections in fibre bundles},
			journal={Trans. Amer. Math. Soc.},
			volume={85},
			date={1957},
			pages={181--207},
			issn={0002-9947},
			review={\MR{0086359}},
			doi={10.2307/1992969},
		}

		\bib{AB02}{article}{
			author={Azad, Hassan},
			author={Biswas, Indranil},
			title={On holomorphic principal bundles over a compact Riemann surface
				admitting a flat connection},
			journal={Math. Ann.},
			volume={322},
			date={2002},
			number={2},
			pages={333--346},
			issn={0025-5831},
			review={\MR{1893920}},
			doi={10.1007/s002080100273},
		}
		
		\bib{AB03}{article}{
			author={Azad, Hassan},
			author={Biswas, Indranil},
			title={On holomorphic principal bundles over a compact Riemann surface
				admitting a flat connection. II},
			journal={Bull. London Math. Soc.},
			volume={35},
			date={2003},
			number={4},
			pages={440--444},
			issn={0024-6093},
			review={\MR{1978996}},
			doi={10.1112/S0024609303002182},
		}
		
	\bib{BBN05}{article}{
		author={Balaji, V.},
		author={Biswas, Indranil},
		author={Nagaraj, D. S.},
		title={Krull-Schmidt reduction for principal bundles},
		journal={J. Reine Angew. Math.},
		volume={578},
		date={2005},
		pages={225--234},
		issn={0075-4102},
		review={\MR{2113896}},
		doi={10.1515/crll.2005.2005.578.225},
	}
	
	\bib{BMW12}{article}{
		author={Biswas, Indranil},
		author={Majumder, Souradeep},
		author={Wong, Michael Lennox},
		title={Root stacks, principal bundles and connections},
		journal={Bull. Sci. Math.},
		volume={136},
		date={2012},
		number={4},
		pages={369--398},
		issn={0007-4497},
		review={\MR{2923408}},
		doi={10.1016/j.bulsci.2012.03.006},
	}
		
\bib{BPeqcn}{article}{
	author={Biswas, Indranil},
	author={Paul, Arjun},
	title={Equivariant bundles and connections},
	journal={Ann. Global Anal. Geom.},
	volume={51},
	date={2017},
	number={4},
	pages={347--358},
	issn={0232-704X},
	doi={10.1007/s10455-016-9538-9},
}
			
\bib{kahler_str}{article}{
	author={Biswas, Indranil},
	author={Schumacher, Georg},
	title={K\"{a}hler structure on moduli spaces of principal bundles},
	journal={Differential Geom. Appl.},
	volume={25},
	date={2007},
	number={2},
	pages={136--146},
	issn={0926-2245},
	review={\MR{2311730}},
	doi={10.1016/j.difgeo.2006.05.001},
}			
			
\bib{BSN15}{article}{
	author={Biswas, Indranil},
	author={Senthamarai Kannan, S.},
	author={Nagaraj, D. S.},
	title={Equivariant principal bundles for $G$-actions and $G$-connections},
	journal={Complex Manifolds},
	volume={2},
	date={2015},
	number={1},
	pages={178--185},
	doi={10.1515/coma-2015-0013},
}

\bib{BSfhc}{article}{
	author={Biswas, Indranil},
	author={Subramanian, S.},
	title={Flat holomorphic connections on principal bundles over a
		projective manifold},
	journal={Trans. Amer. Math. Soc.},
	volume={356},
	date={2004},
	number={10},
	pages={3995--4018},
	issn={0002-9947},
	review={\MR{2058516}},
	doi={10.1090/S0002-9947-04-03567-6},
}

\bib{Brauto}{article}{
	author={Brion, Michel},
	title={On automorphism groups of fiber bundles},
	journal={Publ. Mat. Urug.},
	volume={12},
	date={2011},
	pages={39--66},
	issn={0797-1443},
	review={\MR{3012239}},
}

\bib{Br13}{article}{
	author={Brion, Michel},
	title={On connected automorphism groups of algebraic varieties},
	journal={J. Ramanujan Math. Soc.},
	volume={28A},
	date={2013},
	pages={41--54},
	issn={0970-1249},
}

\bib{BrStr}{article}{
	author={Brion, Michel},
	title={Some structure theorems for algebraic groups},
	conference={
		title={Algebraic groups: structure and actions},
	},
	book={
		series={Proc. Sympos. Pure Math.},
		volume={94},
		publisher={Amer. Math. Soc., Providence, RI},
	},
	isbn={978-1-4704-2601-9},
	date={2017},
	pages={53--126},
	review={\MR{3645068}},
	doi={10.1090/pspum/094/04},
}

\bib{Br18}{article}{
	author={Brion, Michel},
title={ Notes on automorphism groups of projective varieties},
	date={2018},
eprint={https://www-fourier.univ-grenoble-alpes.fr/~mbrion/autos_final.pdf},

}

\bib{DBorel}{book}{
	author={Borel, A.},
	author={Grivel, P.-P.},
	author={Kaup, B.},
	author={Haefliger, A.},
	author={Malgrange, B.},
	author={Ehlers, F.},
	title={Algebraic $D$-modules},
	series={Perspectives in Mathematics},
	volume={2},
	publisher={Academic Press, Inc., Boston, MA},
	date={1987},
	pages={xii+355},
	isbn={0-12-117740-8},
	review={\MR{0882000}},
}

\bib{Bourbaki}{book}{
	author={Bourbaki, N.},
	title={\'{E}l\'{e}ments de math\'{e}matique. XXVI. Groupes et alg\`ebres de Lie.
		Chapitre 1: Alg\`ebres de Lie},
	language={French},
	series={Actualit\'{e}s Sci. Indust., No. 1285},
	publisher={Hermann, Paris},
	date={1960},
	pages={144 pp. (1 insert)},
}

\bib{DKP}{article}{
	title={Logarithmic connections on principal bundles over normal varieties}, 
	author={Dasgupta, Jyoti },
	author={Khan, Bivas },
	author={Poddar, Mainak },
	year={2023},
	eprint={2211.03047},
	archivePrefix={arXiv},
	primaryClass={math.AG}
}

\bib{DG70}{book}{
	author={Demazure, Michel},
	author={Gabriel, Pierre},
	title={Groupes alg\'{e}briques. Tome I: G\'{e}om\'{e}trie alg\'{e}brique,  g\'{e}n\'{e}ralit\'{e}s, groupes commutatifs.},
	language={French},
	note={Avec un appendice {\it Corps de classes local} par Michiel
		Hazewinkel.},
	publisher={Masson \& Cie, \'{E}diteurs, Paris; North-Holland Publishing
		Co., Amsterdam,, },
	date={1970},
	pages={xxvi+700},

}

\bib{D-M}{book}{
	author={Demazure, Michel},
	author={Gabriel, Peter},
	title={Introduction to algebraic geometry and algebraic groups},
	series={North-Holland Mathematics Studies},
	volume={39},
	note={Translated from the French by J. Bell},
	publisher={North-Holland Publishing Co., Amsterdam-New York},
	date={1980},
	pages={xiv+357},
	isbn={0-444-85443-6},
}

\bib{Gray61}{article}{
	author={Gray, John W.},
	title={Extensions of sheaves of algebras},
	journal={Illinois J. Math.},
	volume={5},
	date={1961},
	pages={159--174},
	issn={0019-2082},
	review={\MR{0122854}},
}

\bib{ega1}{book}{
	author={Grothendieck, A.},
	author={Dieudonn\'{e}, J. A.},
	title={\'{E}l\'{e}ments de g\'{e}om\'{e}trie alg\'{e}brique. I},
	language={French},
	series={Grundlehren der mathematischen Wissenschaften [Fundamental
		Principles of Mathematical Sciences]},
	volume={166},
	publisher={Springer-Verlag, Berlin},
	date={1971},
	pages={ix+466},
	isbn={3-540-05113-9},
	isbn={0-387-05113-9},
}

\bib{Stabref}{article}{
	author={Hartshorne, Robin},
	title={Stable reflexive sheaves},
	journal={Math. Ann.},
	volume={254},
	date={1980},
	number={2},
	pages={121--176},
	issn={0025-5831},
	doi={10.1007/BF01467074},
}

\bib{H23}{article}{
	author={Herrero, Andres Fernandez},
	title={On the quasicompactness of the moduli stack of logarithmic G-connections over a curve},
	journal={arXiv},
	volume={},
	date={2020},
	number={},
	pages={},
	issn={},
	doi={arXiv preprint arXiv:2002.11832},
}

\bib{HShomAG}{book}{
	author={Hilton, P. J.},
	author={Stammbach, U.},
	title={A course in homological algebra},
	series={Graduate Texts in Mathematics},
	volume={4},
	edition={2},
	publisher={Springer-Verlag, New York},
	date={1997},
	pages={xii+364},
	isbn={0-387-94823-6},
	review={\MR{1438546}},
	doi={10.1007/978-1-4419-8566-8},
}

\bib{Hump}{book}{
	author={Humphreys, James E.},
	title={Linear algebraic groups.},
	series={},
	publisher={Springer-Verlag, New York-Heidelberg,, },
	date={1975},
	pages={xiv+247},
	review={\MR{0396773}},
}

\bib{Kly}{article}{
	AUTHOR = {Klyachko, A. A.},
	TITLE = {Equivariant bundles over toric varieties},
	JOURNAL = {Izv. Akad. Nauk SSSR Ser. Mat.},
	FJOURNAL = {Izvestiya Akademii Nauk SSSR. Seriya Matematicheskaya},
	VOLUME = {53},
	YEAR = {1989},
	NUMBER = {5},
	PAGES = {1001--1039, 1135},
	ISSN = {0373-2436},
}

\bib{AGS}{article}{
	author = {Li, Ke-Zheng},
	title = {Actions of group schemes {(I)}},
	journal = {Compositio Mathematica},
	pages = {55--74},
	publisher = {Kluwer Academic Publishers},
	volume = {80},
	number = {1},
	year = {1991},
	zbl = {0760.14019},
	mrnumber = {1127059},
	language = {en},
	 doi= {\url{http://www.numdam.org/item/CM_1991__80_1_55_0/}}
}


\bib{LI10}{article}{
	author={Li, Ke-Zheng},
	title={Differential operators and automorphism schemes},
	journal={Sci. China Math.},
	volume={53},
	date={2010},
	number={9},
	pages={2363--2380},
	issn={1674-7283},
	review={\MR{2718832}},
	doi={10.1007/s11425-010-4073-9},
}

\bib{Malle}{book}{
	author={Malle, Gunter},
	author={Testerman, Donna},
	title={Linear algebraic groups and finite groups of Lie type},
	series={Cambridge Studies in Advanced Mathematics},
	volume={133},
	publisher={Cambridge University Press, Cambridge},
	date={2011},
	pages={xiv+309},
	isbn={978-1-107-00854-0},
	review={\MR{2850737}},
	doi={10.1017/CBO9780511994777},
}

\bib{MO}{article}{
	author={Matsumura, Hideyuki},
	author={Oort, Frans},
	title={Representability of group functors, and automorphisms of algebraic
		schemes},
	journal={Invent. Math.},
	volume={4},
	date={1967},
	pages={1--25},
	issn={0020-9910},
	doi={10.1007/BF01404578},
}

\bib{Milneetco}{book}{
	author={Milne, James S.},
	title={\'{E}tale cohomology},
	series={},
	volume={No. 33.},
	publisher={Princeton University Press, Princeton, N.J.},
	date={1980},
	pages={xiii+323},
	isbn={0-691-08238-3},
	review={\MR{0559531}},
}

\bib{AGmil}{book}{
	author={Milne, J. S.},
	title={Algebraic groups},
	series={Cambridge Studies in Advanced Mathematics},
	volume={170},
	note={The theory of group schemes of finite type over a field},
	publisher={Cambridge University Press, Cambridge},
	date={2017},
	pages={xvi+644},
	isbn={978-1-107-16748-3},
	review={\MR{3729270}},
	doi={10.1017/9781316711736},
}

\bib{GIT-MF}{book}{
	author={Mumford, D.},
	author={Fogarty, J.},
	author={Kirwan, F.},
	title={Geometric invariant theory},
	series={Ergebnisse der Mathematik und ihrer Grenzgebiete (2) [Results in
		Mathematics and Related Areas (2)]},
	volume={34},
	edition={3},
	publisher={Springer-Verlag, Berlin},
	date={1994},
	pages={xiv+292},
	isbn={3-540-56963-4},
	review={\MR{1304906}},
}

\bib{NR75}{article}{
	author={Narasimhan, M. S.},
	author={Ramanan, S.},
	title={Deformations of the moduli space of vector bundles over an
		algebraic curve},
	journal={Ann. of Math. (2)},
	volume={101},
	date={1975},
	pages={391--417},
	issn={0003-486X},
	review={\MR{0384797}},
	doi={10.2307/1970933},
}

\bib{Nevins}{article}{
	author={Nevins, Thomas},
	title={Descent of coherent sheaves and complexes to geometric invariant
		theory quotients},
	journal={J. Algebra},
	volume={320},
	date={2008},
	number={6},
	pages={2481--2495},
	issn={0021-8693},
	doi={10.1016/j.jalgebra.2008.04.011},
}

\bib{Oda}{book}{
	author={Oda, Tadao},
	title={Convex bodies and algebraic geometry},
	series={Ergebnisse der Mathematik und ihrer Grenzgebiete (3) [Results in
		Mathematics and Related Areas (3)]},
	volume={15},
	note={An introduction to the theory of toric varieties;
		Translated from the Japanese},
	publisher={Springer-Verlag, Berlin},
	date={1988},
	pages={viii+212},
	isbn={3-540-17600-4},
	review={\MR{0922894}},
}

\bib{Sernesi}{book}{
	author={Sernesi, Edoardo},
	title={Deformations of algebraic schemes},
	series={Grundlehren der mathematischen Wissenschaften [Fundamental
		Principles of Mathematical Sciences]},
	volume={334},
	publisher={Springer-Verlag, Berlin},
	date={2006},
	pages={xii+339},
	isbn={978-3-540-30608-5},
	isbn={3-540-30608-0},
	review={\MR{2247603}},
}


%

	\end{biblist}

\end{bibdiv} 

\end{document}